\newcommand{\R}{\mathbb R}
\newcommand{\E}{\mathbb E}
\newcommand{\bangle}[1]{\langle #1\rangle}
\newcommand{\calD}{\mathcal{D}_\tau}
\newcommand{\calH}{\mathcal{H}_\tau}
\newcommand{\calT}{\mathcal{T}}
\newcommand{\rme}{\mathrm{e}}
\newcommand{\rmL}{\mathrm{L}}
\newcommand{\osc}{\mathrm{osc}}
\newcommand{\ess}{\mathrm{ess}}
\newcommand{\ZDCT}{\mathrm{DC}}
\renewcommand{\leq}{\leqslant}
\renewcommand{\geq}{\geqslant}
\renewcommand{\le}{\leqslant}
\renewcommand{\ge}{\geqslant}
\newcommand{\rmc}{\mathrm{c}}
\theoremstyle{thmstyleone}
\newtheorem{theorem}{Theorem}
\newtheorem{proposition}[theorem]{Proposition}
\newtheorem{lemma}{Lemma}
\theoremstyle{thmstyletwo}
\newtheorem{example}{Example}
\newtheorem{remark}{Remark}
\theoremstyle{thmstylethree}
\newtheorem{definition}{Definition}
\newtheorem{assumption}{Assumption}
\begin{document}

\title[Explicit convergence rates of underdamped Langevin dynamics under weighted and weak Poincar\'e--Lions inequalities]{Explicit convergence rates of underdamped Langevin dynamics under weighted and weak Poincar\'e--Lions inequalities}

\author[1]{\fnm{Giovanni} \sur{Brigati}}\email{giovanni.brigati@ist.ac.at}

\author[2,3]{\fnm{Gabriel} \sur{Stoltz}}\email{gabriel.stoltz@enpc.fr}

\author[4]{\fnm{Andi Q.} \sur{Wang}}\email{andi.wang@warwick.ac.uk}

\author*[5]{\fnm{Lihan} \sur{Wang}}\email{lihanw@andrew.cmu.edu}

\affil[1]{ \orgname{Institute of Science and Technology Austria}, \orgaddress{\street{Am Campus 1}, \city{Klosterneuburg}, \postcode{3400},  \country{Austria}}}

\affil[2]{\orgdiv{CERMICS}, \orgname{Ecole des Ponts, IP Paris}, \orgaddress{\postcode{77455}, \city{Marne-La-Vall\'ee}, \country{France}}}

\affil[3]{\orgdiv{Matherials team-project}, \orgname{Inria Paris}, \orgaddress{\city{Paris}}, \country{France}}

\affil[4]{\orgdiv{Department of Statistics}, \orgname{University of Warwick}, \orgaddress{\city{Coventry}, \postcode{CV4 7AL}, \country{United Kingdom}}}

\affil[5]{\orgdiv{Department of Mathematical Sciences}, \orgname{Carnegie Mellon University}, \orgaddress{\city{Pittsburgh}, \postcode{15213}, \state{PA}, \country{United States}}}

\abstract{We study the long-time behavior of the underdamped Langevin dynamics, in the case of so-called \emph{weak confinement}. Indeed, any $\rmL^\infty$ distribution (in position and velocity) relaxes to equilibrium over time, and we quantify the convergence rate. In our situation, the spatial equilibrium distribution does not satisfy a Poincar\'e inequality. Instead, we assume a weighted Poincar\'e inequality, which allows for fat-tail or sub-exponential potential energies. We provide constructive and fully explicit estimates in $\rmL^2$-norm for $\rmL^\infty$ initial data. A key-ingredient is a new space-time weighted Poincar\'e--Lions inequality, entailing, in turn, a weak Poincar\'e--Lions inequality.}

\keywords{underdamped Langevin dynamics, weighted Poincar\'e inequality, weak Poincar\'e inequality, Poincar\'e--Lions inequality, convergence rate}

\maketitle

\section{Introduction and main results}

The \emph{underdamped Langevin dynamics} on~$\R^d \times \R^d$ evolves the position~$X \in \R^d$ and the velocity (in fact, momentum)~$V \in \R^d$ as  
\begin{equation}
  \label{eq:langevin}
  \left\{
  \begin{aligned}
    \dif X_t &= \nabla_v \psi(V_t)\dif t, \\
    \dif V_t &=  -\nabla_x \phi(X_t)\dif t - \gamma \nabla_v \psi(V_t)\dif t + \sqrt{2\gamma}\dif B_t, 
  \end{aligned}\right.
\end{equation}
where $\phi(x)$ is a so-called \emph{weakly confining} potential energy, $\psi(v)$ is the density of kinetic energy, that may or may not be weak confining, $\gamma>0$ is the friction coefficient, and $(B_t)_{t \geq 0}$ is a $d$-dimensional standard Brownian motion. Confinement properties of $\phi,\psi$ are related to their growth at infinity, and we will give precise definitions below. 

\smallskip

As $t\to\infty$, under suitable conditions on $\phi$ and $\psi$, the joint distribution of $(X_t,V_t)$ is expected to converge to the equilibrium distribution $\Theta(\dif x \dif v):= \mu(\dif x) \otimes \nu(\dif v) $, where $\mu, \nu$ are the following probability measures, absolutely continuous with respect to the Lebesgue measure:
\begin{equation}\label{eq:gibbseq}
  \mu(\dif x) := \frac{1}{Z_{\mathsf x}}\mathrm{e}^{-\phi(x)} \dif x, \quad \nu(\dif v) := \frac{1}{Z_{\mathsf v}}\mathrm{e}^{-\psi(v)} \dif v, \quad Z_{\mathsf x} = \int_{\R^d} \rme^{-\phi} , \qquad Z_{\mathsf v} = \int_{\R^d} \rme^{-\psi},
\end{equation}
with~$\dif x, \dif v$ Lebesgue measures on~$\R^d$. For any observable function $h_0(x,v) \in \rmL^2(\Theta)$, define~$h(t,x,v) := \E_{x,v}\left[ h_0(X_t,V_t) \right]$, where the expectation is taken over realizations of~\eqref{eq:langevin} starting from~$(X_0,V_0)=(x,v)$. This function then satisfies the backward Kolmogorov equation, also known as the \emph{kinetic Ornstein--Uhlenbeck equation}:
   \begin{equation}\label{VOU}
    \partial_t h + \calT h = -\gamma\nabla_v^\star \, \nabla_v h,
\end{equation}
with initial condition $h_0$, where 
\[
\begin{aligned}
&\nabla_x^\star := - (\nabla_x  - \nabla_x \phi) \, \cdot  , 
 \quad \nabla_v^\star := - (\nabla_v - \nabla_v \psi) \, \cdot,
\end{aligned}
\]
are the adjoint operators in $\mathrm{L}^2(\Theta)$ of $\nabla_x$ and~$\nabla_v,$ respectively, and
\[
\calT := - \nabla_v^\star \nabla_x + \nabla_x^\star \nabla_v = \nabla_x \phi \cdot \nabla_v - \nabla_v \psi \cdot \nabla_x
\]
is antisymmetric with respect to $\rmL^2(\Theta)$.
We finally observe that the \emph{distribution function} $f=f(t,x,v)$ of the system described by \eqref{eq:langevin}, given by $f = h \, \Theta$, solves the \emph{kinetic Fokker--Planck equation}
\begin{equation}
\label{eq:kfp}
    \partial_t f + \mathcal{T} f = \gamma \nabla_v \cdot (\nabla_v f + \nabla_v\psi\, f), 
\end{equation}
with initial condition $f(0,x,v) = f_0 := h_0 \, \Theta.$

\smallskip

Due to the long-time convergence of the law of~$(X_t,V_t)$ as~$t\to \infty$, one also expects~$h(t,\cdot,\cdot)$ to converge to the constant $\int_{\R^d \times \R^d} h_0\dif \Theta$. Without loss of generality, we assume henceforth $\int_{\R^d \times \R^d} h_0\dif \Theta =0$, since the equation~\eqref{VOU} is linear and satisfied by constant functions. As \eqref{VOU} is also mass-preserving, we have $\int_{\R^d \times \R^d} h(t,\cdot,\cdot) \, \dif \Theta = 0$ for all $t \geq 0$.

\smallskip

Inspired by the discussion in \cite{bouin2023mathrm}, we quantify the convergence rate in $\mathrm{L}^2(\Theta)$ for initial data $h_0 \in \mathrm{L}^\infty(\Theta)$, and for various choices of the potentials $\phi,\psi$, according to their confining properties. While formulated under general assumptions, the results of this paper --- collected in Theorems~\ref{thm:weightedpiconv} to~\ref{thm:weakPIconv} below --- can be applied to a few well-known benchmark examples, as given in Table \ref{tab:weakpirates} below. We adopted the same style of presentation as \cite{bouin2023mathrm}. Our estimates are fully constructive, and rely on functional inequalities, involving weak norms of the solution to \eqref{VOU}.  

\begin{table}[htpb]
  \centering
  \begin{tabular}{|c|c|c|c|}
    \hline
    Potential & \makecell{$\psi(v) = \bangle{v}^{\delta}$ \\ $\delta\geq 1$} & \makecell{$\psi(v) = \bangle{v}^{\delta}$ \\ $\delta\in (0,1)$} & \makecell{$\psi(v) =$ \\ $ (d+q)\log\bangle{v}$} \\
    \hline
    \makecell{$\phi(x) = \bangle{x}^\alpha$ \\ $\alpha\geq 1$} & $\exp(-\lambda t)$ & \makecell{$\exp(-ct^{\delta/(2-\delta)})$\\ Example~\ref{ex:v_subex_x_std}} & \makecell{$t^{-q/2}$ \\ Example~\ref{ex:v_poly_x_std}}\\
    \hline
    \makecell{$\phi(x) = \bangle{x}^\alpha$ \\ $\alpha< 1$} & \makecell{$\exp(-ct^{\alpha/(2-\alpha)})$ \\ Example~\ref{ex1}} & \makecell{$\exp(-ct^{\alpha\delta/(2\alpha+2\delta-3\alpha\delta)})$ \\ Example~\ref{ex6}} & \makecell{$t^{-q/2-}$ \\ Example~\ref{example:v_poly_x_subexp}} \\
    \hline
    \makecell{$\phi(x) =$  \\ $(d+p)\log \bangle{x}$} & \makecell{$t^{-p/2}$ \\ Example~\ref{ex:poly_gaus}} & \makecell{$t^{-p/2-}$ \\ Example~\ref{example:v_subexp_x_subexp}} & \makecell{$t^{-pq/(4+2p+2q)}$ \\ Example~\ref{exam:vel_poly_x_poly}}\\
    \hline 
  \end{tabular}\caption{Explicit convergence rates for solutions of~\eqref{VOU} for various benchmark potentials and~$\rmL^\infty$ initial data. The results in the second row are either well-known (the exponentially convergent case) or can be found in Appendix \ref{app:stdspace}.}
  \label{tab:weakpirates}
\end{table}

The original contributions of this paper can be summarized as follows.

\begin{enumerate}
    \item We establish precise stretched-exponential and algebraic convergence rates, which improve or complement the state of the art, see Remark~\ref{rem:cl} for a precise comparison. 
    \item We adapt the hypocoercivity framework of \cite{Albritton2019} to the weak-confinement case. To this extent, we prove new weighted and weak Poincar\'e--Lions inequalities, in Theorem~\ref{thm:wtpi} and Lemma~\ref{lem:weak_Poincare_solutions_VOU}, which depend on the construction of a solution to a suitable divergence equation \cite{amrouche2015lemma}. 
\end{enumerate}

\paragraph{Scientific motivations}
Kinetic equations describe the time evolution of a distribution function $f=f(t,x,v)$ in position $x$ and velocity $v$. The distribution $f$ models a system of interacting/colliding particles, at an intermediate level between the microscopic and the macroscopic scales. In the Boltzmann--Grad limit \cite{grad1949kinetic}, which considers only short-range, strong interactions between particles and neglect re-collisions, we obtain the Boltzmann equation for $f=f(t,x,v)$~\cite{boltzmann1872weitere}
\[
\partial_t f + \calT f = \mathcal Q(f,f), 
\]
where $\calT$ is the transport operator (i.e., free motion of particles), and $\mathcal Q(f,f)$ encodes the collision between particles. The term  $\mathcal Q(f,f)$ pushes $f$ towards the velocity equilibrium $\nu(\dif v) := \mathrm{e}^{-\psi(v)} \, \dif v$, which is left unchanged by $\mathcal Q$, and, together with the transport term $\calT$, pushes $f$ towards global equilibrium $\Theta$. However, due to nonlinearlity of $\mathcal{Q}$, proving this convergence result for Boltzmann's equation is a major challenge.

In our paper, we analyze the long-time convergence to global equilibrium for a simpler model, namely \emph{underdamped Langevin dynamics}. In this case, the distribution function $f$ satisfies a kinetic Fokker--Planck equation, which replaces the collision kernel $\mathcal Q(f,f)$ by the interaction with the velocity equilibrium $\mathcal Q(f,\nu)$, and describes the motion of particles under Newton's second law of motion. The interplay between the transport and interactions is still highly nontrivial, and its treatment requires techniques from \emph{hypocoercivity} (see below). We specialize to the case of \emph{weakly confining potentials $\phi$}, where the control on the spatial marginal requires additional care with respect to the classical setting. 

Our paper provides an explicit quantification of the convergence rate, for weakly-confining, underdamped Langevin dynamics. Our approach is fully constructive, and relies on functional inequalities involving weak norms of~$f$. Beside the main result, we develop the technical toolbox needed in the proof, with new weighted and weak Poincar\'e-like inequalities adapted to the problem under investigation.

On top of the physical motivation, our convergence estimates are relevant for the study of algorithms used in computational statistics (also known as molecular dynamics) and machine learning, where Langevin dynamics are the backbone of many popular methods for sampling from a distribution~$\mu$; see for instance~\cite{LRS10,LM15,LS16,AT17} for various reference works in molecular dynamics where Langevin dynamics play an important role. Compared to the overdamped dynamics
\[
\dif X_t = -\nabla_x \phi(X_t) \dif t + \sqrt{2}\dif W_t,
\]
the kinetic dynamics~\eqref{eq:langevin} allows to construct better numerical schemes~\cite{leimkuhler2013rational, shen2019randomized} with larger step-sizes, which reduces the computational cost \cite{cheng2018underdamped, dalalyan2020sampling}. Intuitively, the addition of momentum is analogous to Nesterov acceleration in optimization~\cite{ma2021there} and enables the particle to more effectively explore the state space; see, for instance, \cite{Cao2023}. In such applications, it is necessary to study weak confinement in the case of \textit{heavy-tailed} targets, as arise, say when performing regression with heavy-tailed noise. For similar analysis in discrete-time algorithms, see~\cite{Andrieu2023}. 

\paragraph{Hypocoercivity}
The kinetic Ornstein--Uhlenbeck equation \eqref{VOU} is characterized by a degenerate dissipative term $-\gamma \nabla_v^\star \nabla_v h$ in the $v$-variable. A simple computation shows that
\begin{equation}\label{eq:ee}
\frac{\dif}{\dif t} \|h(t,\cdot,\cdot)\|^2_{\mathrm{L}^2(\Theta)} = - 2\gamma \|\nabla_v h(t,\cdot,\cdot)\|^2_{\mathrm{L}^2(\Theta)} \leq 0,
\end{equation}
which proves that the $\mathrm{L}^2$-distance from equilibrium decreases along the flow of \eqref{VOU}, together with the fact that the dissipation term $\|\nabla_v h(t,\cdot,\cdot)\|^2_{\mathrm{L}^2(\Theta)}$ is integrable along time. The right-hand side of \eqref{eq:ee} vanishes every time that $h(t,x,v) = \rho(t,x)$, i.e., when $h$ is at local equilibrium. This simple energy estimate is not sufficient to prove that~$h \to h_\infty=0$ for $t \to \infty$, as the spatial variables cannot be controlled in terms of the velocity derivatives only. On the other hand, the transport operator $\calT$ connects the variable $x$ to the variable $v$, allowing a passage of information from the latter to the former. This idea is shared by all the strategies which have been developed to treat degenerate-dissipative equations as \eqref{VOU}.

\smallskip

The first contribution in this direction came with A.~Kolmogorov~\cite{kolmogoroff1934zufallige}, where the fundamental solution of the equation $\partial_t f + v\cdot \nabla_x = \Delta_v f$ on $\mathbb R^{1+2d}$ is derived. In spite of this equation being degenerate-diffusive, regularization properties are recovered also in the $x$-variable.  

\smallskip

L.~H\"ormander~\cite{hormander1967hypoelliptic} provided a general theory for degenerate-elliptic operators $\mathcal{L}$, which are \emph{hypoelliptic}, i.e., such that the solution $u$ to  $\mathcal L u =f$ exists and is unique, and it is regular when~$f$ is. 
Indeed, L.~H\"ormander considered the case of operators  $\mathcal{L} = X_0 + \sum^N_{i=1} X_i^\dagger X_i$, where $X_i$ is a first order differential operator (and~$\dagger$ denotes the flat~$\rmL^2$ adjoint), as the generator of \eqref{VOU} is. For such operators, hypoellipticity is equivalent to the bracket condition
\[
\mathrm{Lie}(X_i)_{i=0}^N = \mathbb R^{1+2d}.
\]
We observe that 
\[
[\calT,\nabla_v] = \calT \nabla_v - \nabla_v \calT = (\nabla_{vv}^2 \psi) \, \nabla_x. 
\]
Hence, provided $(\nabla^2_{vv} \psi)$ is non-degenerate, equation \eqref{VOU} admits a unique, regular solution for all $h_0 \in \mathrm{L}^2(\Theta)$.
The commutator $[\calT,\nabla_v]$ plays a key-role in controlling the $x$-variable for \eqref{VOU}, see~\cite{eckmann_spectral_2003, herau_isotropic_2004, helffer2005hypoelliptic}. Early tools to study the long-time behavior of~\eqref{VOU} included also Lyapunov techniques~\cite{mattingly_ergodicity_2002, wu2001large, bellet2006ergodic}.

\smallskip

While hypoellipticity concerns regularity, the more recent theory of \emph{hypocoercivity} aims at establishing convergence rates, as $t\to \infty$, for hypoelliptic kinetic equations. After~\cite{talay2002stochastic, mouhot2006quantitative}, C.~Villani~\cite{villani_hypocoercivity_2009} developed the $\mathrm{H}^1$-hypocoercivity framework. Relying on commutators, C.~Villani constructed a functional~$\mathcal E$ equivalent to the standard squared $\mathrm{H}^1$-norm, and such that $\frac{\dif}{\dif t} \mathcal E(h(t,\cdot,\cdot)) < 0$ along the flow of $\partial_t h = \mathcal{L} h$. If $\mathcal{L}$ is the generator of \eqref{VOU}, and the measure $\Theta$ satisfies a Poincar\'e inequality, it holds true that $\mathcal{E}(h(t,\cdot,\cdot)) \leq \mathrm{e}^{-\lambda t} \mathcal{E}(h_0)$, so that  
\[
\forall t>0, \qquad  \|h(t,\cdot,\cdot)\|^2_{\mathrm{H}^1(\Theta)} \leq C \, \mathrm{e}^{-\lambda t} \|h_0\|^2_{\mathrm{H}^1(\Theta)}, 
\]
for a \emph{rate} $\lambda>0$ and a pre-factor $C>1$ independent of~$h$.

\smallskip

Later, inspired by the pioneering contribution  \cite{herau_hypocoercivity_2006}, an $\rmL^2$-approach to hypocoercivity was developed by J.~Dolbeault, C.~Mouhot, and C.~Schmeiser in \cite{dolbeault_hypocoercivity_2009, dolbeault2015hypocoercivity}, which applies to linear kinetic equations in the form 
\begin{equation}
\partial_t h + \calT h = \gamma \, Q(h,\nu), \qquad h=h(t,x,v),
\end{equation}
where $Q$ is an interaction kernel with a fixed background $\nu$ (in velocity variable only). 
Two motivations were the study of equations with non-regularizing kernels, and the consistency of the decay estimates for $h$ with the overdamped regime ($\gamma \to \infty)$ of the equation, see also~\cite{grothaus_hypocoercivity_2014, dolbeault2013exponential}.
The \emph{DMS} technique is based on a twisted $\mathrm{L}^2$-functional $\mathcal H = \frac{1}{2} \, \|\cdot\|_{\mathrm{L}^2(\Theta)}^2 + \epsilon (\cdot, A \cdot)_{\mathrm{L}^2}$, for a parameter $\epsilon>0$ chosen such that $\mathcal H$ is equivalent to~$\|\cdot\|_{\mathrm{L}^2(\Theta)}^2$. Let us indicate with $\Pi_v$ the velocity average 
\[
(\Pi_v h)(t,x) := \mathbb{E}_\nu (h(t,x,v)).
\]
The operator $A = (1+(\calT \Pi_v)^\star (\calT \Pi_v))^{-1} (\calT \Pi_v)^\star$ is built in accordance with the shape of the overdamped limit. An interpretation based on a Schur complement, together with a thorough review of the literature, can be found in~\cite{bernard2022hypocoercivity}. Regarding \eqref{VOU}, if $\Theta$ satisfies a Poincar\'e inequality, it holds true that $\frac{\dif}{\dif t} \mathcal{H}(h(t,\cdot,\cdot)) \leq - \lambda \mathcal{H}(h(t,\cdot,\cdot))$, for a constant $\lambda>0$. Such an entropy-entropy production estimate improves on \eqref{eq:ee}.
Hence, for a pre-factor $C>1$,   
\begin{equation}\label{eq:hypodms}
\|h(t,\cdot,\cdot)\|^2_{\mathrm{L}^2(\Theta)} \leq C \, \mathrm{e}^{-\lambda t} \, \|h_0\|^2_{\mathrm{L}^2(\Theta)}.
\end{equation}
The scaling of the convergence rate of \eqref{VOU} can be studied in both limiting regimes~$\gamma \to 0$ and $\gamma\to\infty$~\cite{grothaus_hypocoercivity_2014, dolbeault2013exponential,IOS19}. 
Other techniques for studying \eqref{VOU} include $\Gamma_2$ calculus \cite{baudoin_bakry-emery_2013}, matrix inequalities~\cite{arnold2024exponential}, and coupling \cite{eberle_couplings_2019}.
\paragraph{The variational approach}
In \cite{Albritton2019}, a new \emph{variational} $\mathrm L^2$ hypocoercive technique, involving weak norms of solutions to \eqref{VOU} and functional inequalities, is introduced. One important observation is that ---due to hypoellipticity--- the right hand side of \eqref{eq:ee} may vanish only on a Lebesgue-measure zero subset of~$[0,\infty)$. Hence, fixing a time-averaging parameter $\tau>0$, we consider the functional
\[
\forall t>0, \qquad \mathcal{H}_\tau(h,t) = \frac1\tau \int_t^{t+\tau} \|h(s,\cdot,\cdot)\|^2_{\mathrm{L}^2(\Theta)} \, \dif s,
\]
for all functions $h \in \mathrm{L}^2_{\mathrm{loc}}(0,\infty,\mathrm{L}^2(\Theta)).$ 
As in \eqref{eq:ee}, we compute, along the flow of \eqref{VOU},
\begin{equation}\label{eq:ee2}
\forall t>0, \qquad \frac{\dif}{\dif t} \mathcal{H}_\tau(h,t) = -\frac{2 \gamma}{\tau} \int_t^{t+\tau} \|\nabla_v h(s,\cdot,\cdot)\|^2_{\mathrm{L}^2(\Theta)} \, \dif s.
\end{equation}
In addition, directly from the equation, see \cite{Albritton2019,brigati2023time,Cao2023}, we find that 
\[
\|(\partial_t + \calT)h\|_{\mathrm{L}^2(0,\infty; \mathrm{L}^2(\mu; \mathrm{H}^{-1}(\nu)))} \leq \|\nabla_v h\|^2_{\mathrm{L}^2(0,\infty;\mathrm{L}^2(\Theta))}.
\]
This way, well-posedness of \eqref{VOU} is achieved in the space 
\[
\mathrm{H}^1_{\mathrm{kin}} := \{ h \in \mathrm{L}^2(0,\tau;\mathrm{H}^1(\nu)) \, : \, (\partial_t + \calT)h \in \mathrm{L}^2(0,\tau; \mathrm{L}^2(\mu; \mathrm{H}^{-1}(\nu)))\},
\]
for any $\tau>0$, as shown in \cite{degond1986global}, and later in \cite{Albritton2019}. Let us briefly sketch the hypocoercive scheme of~\cite{Albritton2019}, in the simple case where $\Theta(\dif x,\dif v) = \dif x \otimes (2\pi)^{-d/2} \, \mathrm{e}^{-|v|^2/2} \,\dif v$, and the $x$-variable is confined in a unit torus $\mathbb T^d$, with periodic boundary conditions.
\begin{enumerate}
    \item Let $h$ be a solution to \eqref{VOU} on the interval $[0,\tau]$, for $\tau>0$. Let $U_\tau$ be the normalized Lebesgue measure on $[0,\tau]$:
    \[
    U_\tau(\dif t) = \frac{1}{\tau} \mathbf{1}_{[0,\tau]} \dif t.
    \]
    Then, the macroscopic and microscopic parts of $h$ are split apart:
    \[\|h\|^2_{\mathrm{L}^2(U_\tau \otimes \Theta)} = \|h-\Pi_v h \|^2_{\mathrm{L}^2(U_\tau \otimes \Theta)} + \|\Pi_v h\|^2_{\mathrm{L}^2(U_\tau \otimes \Theta)}. 
    \]
    \item The Poincar\'e inequality in the measure $\nu$ implies 
    \[
    \|h-\Pi_v h\|^2_{\mathrm{L}^2(U_\tau \otimes \Theta)} \leq \|\nabla_v h\|^2_{\mathrm{L}^2(U_\tau \otimes \Theta)}.
    \]
    \item For the macroscopic term $\|\Pi_v h\|^2_{\mathrm{L}^2(U_\tau \otimes \Theta)}$, the Poincar\'e--Lions inequality \cite{amrouche2015lemma} on $[0,\tau] \times \mathbb T^d$ yields
    \[
    \|\Pi_v h\|^2_{\mathrm{L}^2(U_\tau \otimes \Theta)} \lesssim \|\nabla_{t,x} \Pi_v h\|^2_{\mathrm{H}^{-1}([0,\tau]\times \mathbb T^d)}.
    \]
    Then, the velocity variable is re-introduced via an averaging lemma, which reads 
    \[
    \|\nabla_{t,x} \Pi_v h\|^2_{\mathrm{H}^{-1}([0,\tau]\times \mathbb T^d)} \lesssim \|\nabla_v h\|^2_{\mathrm{L}^2(U_\tau \otimes \Theta)} + \|(\partial_t +\calT) h\|^2_{\mathrm{L}^2(U_\tau \otimes \mu; \mathrm{H}^{-1}(\nu))}. 
    \]
    \item Combining the previous passages gives a \emph{twisted Poincar\'e inequality} for solutions to \eqref{VOU}.
    \[\|h\|^2_{\mathrm{L}^2(U_\tau \otimes \Theta)} \lesssim \|\nabla_v h\|^2_{\mathrm{L}^2(U_\tau \otimes \Theta)}.
    \]
    This way, the entropy-entropy inequality estimate \eqref{eq:ee2} leads to 
    \[
    \forall t>0, \qquad \frac{\dif}{\dif t} \mathcal{H}_\tau(h,t) \leq - \lambda \,\mathcal{H}_\tau(h,t), 
    \]
    for some positive constant $\lambda$.
    Finally,
    \[
    \mathcal{H}_\tau(h,t) \leq \mathrm{e}^{-\lambda t} \|h_0\|^2_{\mathrm{L}^2(\Theta)}.
    \]
\end{enumerate}
We note that the pre-factor $C>1$ ---typical of classical hypocoercive estimates--- is not present, as the reference $\mathrm{L}^2$-norm is never changed. One advantage is that the parameter $\tau$ can be adjusted quite explicitly, optimizing~$\lambda$ and~$C$ in~\eqref{eq:hypodms} may be complicated~\cite{achleitner2017optimal}. 

\paragraph{Constructive estimates and second-order lifts}

The scheme in \cite{Albritton2019} comes without explicit estimates for the rates. In \cite{brigati2023time}, a fully constructive approach is studied for the model-case described above, where particles are confined in the periodic box $\mathbb T$.
In~\cite{Cao2023}, the first constructive estimates are established for \eqref{VOU} in presence of a spatial potential $\phi$, such that $\mu$ satisfies a \emph{Poincar\'e inequality}: \begin{equation}\label{eq:pimu}
  \forall g\in \mathrm{H}^1(\mu), \qquad \textrm{Var}_\mu (g) \le \frac{1}{C_{\mathrm P,\mu}}\mu(|\nabla g|^2),
\end{equation}
for some constant~$C_{\mathrm P,\mu}>0$, which essentially amounts to (super-)linear growth of $\phi$ at infinity (see~\cite[Proposition~4.4.2]{bakry_analysis_2014}). From the aforementioned works, if $\nu$ is the standard Gaussian measure, it is known that the convergence to equilibrium is exponential. In \cite{Cao2023} it is shown that, if $\phi$ is convex and $C_{\mathrm{P},\mu} \ll 1$, then, by optimizing~$\gamma$, solutions to~\eqref{VOU} converge exponentially with rate~$\lambda = \mathrm{O}(\sqrt{C_{\mathrm{P},\mu}})$. This was previously known only in the Gaussian setting, and shows the optimality of the underdamped Langevin dynamics \eqref{eq:langevin} as a second-order lift~\cite{eberle2024non} of reversible \textit{overdamped Langevin dynamics}:
\begin{equation}
  \dif X_t = -\nabla \phi(X_t)\dif t+ \sqrt{2} \dif B_t.
  \label{eq:overdamped}
\end{equation}
In general, with the covariance matrix
\begin{equation}
  \label{eq:scrm}
  \mathscr{M} := \int_{\R^d} \nabla_v \psi \otimes \nabla_v \psi \dif \nu \in \R^{d \times d},
\end{equation}
Langevin dynamics is the second-order lift of the twisted overdamped dynamics
\[
\dif X_t = -\mathscr{M}\nabla \phi(x_t)\, \dif t + \sqrt{2}\mathscr{M}^{1/2} \, \dif B_t.
\]
We refer to~\cite{brigati2024hypocoercivity} for a more precise discussion of the interplay between hypocoercivity and lifts. 

\smallskip

The framework of \cite{Cao2023} was further developed in  \cite{Brigati2023}, where general and possibly weak confining kinetic energies are treated, and the $\mathrm{H}^1(\mu) \to \rmL^2(\mu)$ compact embedding assumption used in~\cite{Cao2023} is removed. This largely enables the further development of the framework in this paper. Other related works include \cite{lu2022explicit}, where linear Boltzmann-type operator could be tackled, \cite{dietert2022quantitative} which allows the diffusion to be degenerate in various parts of the domain, and \cite{li2024quantum} which considers Lindblad dynamics that arise in quantum systems.  

\paragraph{Main goal: explicit (Sub)Exponential convergence.}

If a Poincar\'e inequality does not hold for either $\mu$ or $\nu$, then convergence of solutions to \eqref{VOU} is no longer exponential \cite{hu2019subexponential, cao2020kinetic, grothaus2019weak, Brigati2023}. In particular, in~\cite{bouin2023mathrm}, the $\mathrm{L}^2$-hypocoercivity approach of \cite{dolbeault2015hypocoercivity} is adapted to the weak-confinement setting for \eqref{VOU}, deriving algebraic convergence rates whenever $h_0\in\mathrm{L}^\infty$. The work \cite{Dietert2023} indicates that the framework of~\cite{Albritton2019} is also applicable to obtain convergence results in the weak-confinement setting, although the so-obtained bounds are known to be implicit, in particular concerning dimension dependence. The goal of this work is to fully harness the strength of the framework of~\cite{Albritton2019, Cao2023, Brigati2023} and obtain explicit convergence rates in the weak-confinement setting.
We treat various cases of potentials $\phi,\psi$, following up on the analysis started in \cite{bouin2023mathrm}.

\paragraph{Assumptions on the potential and kinetic energies.}
Our main assumption is the following \emph{weighted Poincar\'e inequality} on $\mu$. This kind of inequality has been studied in \cite{Muckenhoupt1972,blanchet2007hardy,bobkov2009weighted}. 

\medskip

\begin{assumption}\label{a:wp}
  The potential energy function~$\phi$ is such that~$\rme^{-\phi} \in \rmL^1(\dif x)$.  There exist a function $W\in C^1(\R^d)$ such that $W(x) \geq 1$, 
  \begin{equation}
    \label{int}
    \int_{\R^d} W^\sigma \dif \mu < \infty
  \end{equation}
  for some exponent~$\sigma>0$, and
  \begin{equation}\label{eq:condW}
    \sup_{\R^d} \frac{|\nabla_x W|}{W} \le \theta_W
 \end{equation}
  for some constant $\theta_W \in \mathbb{R}_+$.
  Moreover, there exists a constant $P_W > 0$ such that, for any function $f\in \rmL^2(\mu_W)$ satisfying~$\nabla_x f \in \rmL^2(\mu)$, it holds
  \begin{equation}\label{eq:wp}
    \int_{\R^d} \left(f - \int_{\R^d} f\dif \mu_W \right)^2 \, \dif \mu_W \leq P_W \int_{\R^d} |\nabla_x f|^2 \, \dif \mu,
  \end{equation}
  where
  \[
  \mu_W := \frac{1}{Z_W} W^{-2} \dif \mu, \qquad Z_W = \int_{\R^d} W^{-2} \dif \mu \in (0,1).
  \]
  \end{assumption}

  \medskip

Let us mention two representative potentials for which Assumption~\ref{a:wp} holds true with appropriate choices of function~$W$:
\begin{itemize}
\item if $\phi(x) = \bangle{x}^\alpha$ for some $\alpha \in (0,1)$ (with~$\bangle{x} := \sqrt{1+|x|^2}$), then $W(x) = \bangle{x}^{1-\alpha}$ (see~\cite{bouin2021hypocoercivity}), and \eqref{int} holds for any $\sigma\in(0,\infty)$;
\item if $\phi(x) = (p+d)\log \bangle{x}$ for some $p>0$, then $W(x) = \bangle{x}$ (see~\cite{blanchet2007hardy, bobkov2009weighted}), \eqref{int} holds for~$\sigma \in (0,p)$ and the constant~$P_W$ in~\eqref{eq:wp} is~$2Z_W^{-1}/p$ (see~\cite{bobkov2009weighted}).
\end{itemize}
In both cases the bound~\eqref{eq:condW} holds with $\theta_W=1$. In general we expect~$\theta_W$ to be dimension-free. 

\smallskip

We would like to comment here that our weighted Poincar\'e inequality is known as the ``reversed weighted Poincar\'e inequality'' in \cite{bobkov2009weighted} or ``converse weighted Poincar\'e inequality'' in \cite{cattiaux2010functional}. The weighted Poincar\'e inequality in \cite{bobkov2009weighted, cattiaux2010functional} has the form \[ \int_{\R^d} \Big( f - \int_{\R^d} f\dif \mu\Big)^2 \dif \mu \lesssim \int_{\R^d} |\nabla_x f|^2 W^2 \dif \mu,\] and is stronger than \eqref{eq:wp}. Additionally, we scaled the left-hand side of \eqref{eq:wp} so that $\mu_W$ is now a probability measure. 

\smallskip

We also assume pointwise bounds on the first and second derivatives of~$\phi$, a condition which holds for the above benchmark examples.

\medskip

\begin{assumption}\label{a:hessian}
  The potential $\phi$ belongs to~$C^2(\R^d)$. Moreover, there exist constants $M,L>0$ such that
  \begin{equation}
    \label{eq:lowerbdphi}
    \forall x\in \R^d, \qquad |\nabla_x \phi(x)| \le L, \qquad -M \mathrm{Id} \le \nabla^2_{xx} \phi(x) \le M \mathrm{Id}.
  \end{equation}
\end{assumption}

\medskip

\begin{remark}
The bound on the Hessian, which is essentially the ``Ricci curvature'' of $\mu$, is standard and allows for a simpler presentation of the results compared to works like~\cite{bernard2022hypocoercivity, villani_hypocoercivity_2009}. We believe that our assumed bound on the first derivative is only technical, as it is solely used to prove the~$\rmL^2 \rightarrow \mathrm{H}^2$ elliptic regularity for solutions of \eqref{ellipt} below (more precisely, we use it to show that the right hand side of~\eqref{eq:difellipt} belongs to the correct functional space). Another possible assumption to guarantee that the solution of~\eqref{ellipt} is in~$\mathrm{H}^2$ is that $W|\nabla_{xx}^2 \phi| \lesssim 1$, which is also satisfied for many interesting benchmark examples. 
\end{remark}

\medskip

As for $\psi(v)$, we assume the following moment bounds to prove the crucial averaging lemma (see Lemma~\ref{lm:avg}). 

\medskip

\begin{assumption}\label{ass2}
The function~$\psi\in C^2(\R^d)$ and $\nu(\dif v) = \mathrm{e}^{-\psi(v)} \dif v\in \mathcal{P}_{AC}(\dif v)$. Moreover,
\begin{equation}
\label{eq:integrability_conditions_on_psi}
  \int_{\R^d} |\nabla_v \psi(v)|^4 \, \nu(\dif v) + \int_{\R^d} |\nabla_{vv}^2 \psi(v)|^2 \, \nu(\dif v) < +\infty.
\end{equation}
\end{assumption}

\medskip

By Assumption~\ref{ass2}, the covariance matrix $\mathscr{M}$ defined in \eqref{eq:scrm} is well-defined and positive definite (see for instance the argument in~\cite[Lemma~A.1]{stoltz2018langevin}).

\paragraph{Convergence result for a weighted Poincar\'e inequality.}
We are now in position to state our first convergence result, which is directly based on the weighted Poincar\'e inequality~\eqref{eq:wp} (see Section~\ref{sec:weightedpiproof} for the proof).

\medskip

\begin{theorem}
  \label{thm:weightedpiconv}
  Suppose that the potential $\phi$ and the weight function~$W$ satisfy Assumptions~\ref{a:wp} and~\ref{a:hessian}, and that the kinetic energy~$\psi$ satisfies Assumption~\ref{ass2}. Let $h(t)$ be the solution of \eqref{VOU}, for an initial condition $h_0 \in \rmL^\infty(\Theta)$ such that $\iint_{\R^d \times \R^d} h_0(x,v) \Theta(\dif x \dif v)=0$. 
  Then, 
  \begin{enumerate}[(i)]
  \item If $\nu$ satisfies the Poincar\'e inequality 
    \begin{equation}\label{eq:pinu}
    \mathrm{Var}_\nu(f) \le \frac{1}{C_{\mathrm{P},\nu}}\nu(|\nabla_v f|^2),
    \end{equation}
    then there exists an explicitly computable constant $C>0$, depending on $\phi, \psi,W,\sigma,\gamma,\|h_0\|_{\mathrm{L}^\infty(\Theta)}$, such that
    \begin{equation*}
      \|h(t)\|_{\rmL^2(\Theta)} \le C(1+t)^{-\sigma/4}.
    \end{equation*}\label{enu:thm_i}
  \item If there exists some weight function~$\mathcal{G} \in C^1(\R^d)$ with~$\mathcal{G}(v) \geq 1$, and some $\delta>0$ such that $\int_{\R^d} \mathcal{G}^\delta\dif \nu<\infty$, and $\nu$ satisfies the weighted Poincar\'e inequality
    \begin{equation}\label{eq:wtpinu}
      \int \mathcal{G}^{-2}(v)(f(v)-\nu(f))^2 \, \nu(\dif v) \le P_v \int |\nabla_v f|^2 \dif \nu,
    \end{equation}
    then there exists an explicitly computable constant $C>0$ depending on $\phi, \psi,W,\mathcal{G},\sigma,\gamma,\delta,\|h_0\|_{\mathrm{L}^\infty(\Theta)}$, such that
    \begin{equation*}
      \|h(t)\|_{\rmL^2(\Theta)} \le C(1+t)^{- \frac{\sigma \delta}{4(\sigma+\delta+2)}}.
    \end{equation*}\label{enu:thm_ii}
  \end{enumerate}
\end{theorem}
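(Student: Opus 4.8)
The plan is to adapt the variational hypocoercive scheme sketched in the introduction, replacing each occurrence of the classical Poincaré inequality for $\mu$ (respectively $\nu$) by its weighted counterpart, and to track how the weights $W$ and $\mathcal G$ cost us powers of weak norms of the solution. The starting point is the energy dissipation identity \eqref{eq:ee2}: along the flow of \eqref{VOU}, $t\mapsto\calH_\tau(h,t)$ is nonincreasing with derivative $-\tfrac{2\gamma}{\tau}\int_t^{t+\tau}\|\nabla_v h\|^2_{\rmL^2(\Theta)}\dif s$. The core analytic task is to prove a \emph{twisted weak Poincaré inequality for solutions of \eqref{VOU}}, of the form
\begin{equation*}
\calH_\tau(h,t) \;\lesssim\; \left(\frac1\tau\int_t^{t+\tau}\|\nabla_v h\|^2_{\rmL^2(\Theta)}\dif s\right)^{\beta}\,\|h_0\|_{\rmL^\infty(\Theta)}^{2(1-\beta)} \;+\; (\text{small correction}),
\end{equation*}
for a suitable $\beta\in(0,1)$ determined by $\sigma$ and $\delta$; feeding this into \eqref{eq:ee2} gives a differential inequality $\tfrac{\dif}{\dif t}\calH_\tau \le -c\,\calH_\tau^{1/\beta}\|h_0\|_\infty^{-2(1/\beta-1)}$, whose integration yields the algebraic rate $(1+t)^{-\beta/(1-\beta)}$ up to constants. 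Matching exponents: part (i) should give $\beta/(1-\beta)$ with $\beta$ such that the rate is $\sigma/2$ in the squared norm, i.e.\ $(1+t)^{-\sigma/4}$ after taking square roots; part (ii) should degrade this to $\tfrac{\sigma\delta}{2(\sigma+\delta+2)}$ in the squared norm.

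The steps, in order. First, split $h = (h-\Pi_v h) + \Pi_v h$ in $\rmL^2(U_\tau\otimes\Theta)$ as in the sketch. Second, for the microscopic part $h-\Pi_v h$: in case (i) the Poincaré inequality \eqref{eq:pinu} for $\nu$ controls it directly by $\|\nabla_v h\|^2$; in case (ii) the weighted Poincaré inequality \eqref{eq:wtpinu} controls $\|\mathcal G^{-1}(h-\Pi_v h)\|^2_{\rmL^2(\Theta)}$ by $\|\nabla_v h\|^2$, and one recovers $\|h-\Pi_v h\|^2$ by a weak-norm interpolation: writing $\|h-\Pi_v h\|^2 = \int \mathcal G^{2r}\mathcal G^{-2r}(h-\Pi_v h)^2$ and using Hölder with the moment bound $\int \mathcal G^\delta\dif\nu<\infty$ together with the $\rmL^\infty$ bound on $h$, one trades a power of $\|\nabla_v h\|^2$ against a power of $\|h_0\|_\infty$. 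Third, for the macroscopic part $\Pi_v h$: apply the weighted Poincaré--Lions inequality of Theorem~\ref{thm:wtpi} on $[0,\tau]\times\R^d$ (this is where the construction of a solution to the divergence equation \cite{amrouche2015lemma} enters) to bound $\int \mathcal W^{-2}(\Pi_v h - \text{avg})^2 \dif(U_\tau\otimes\mu)$ by the $\mathrm H^{-1}$-norm of $\nabla_{t,x}\Pi_v h$, where $\mathcal W$ is the space-time weight built from $W$. Fourth, invoke the averaging lemma (Lemma~\ref{lm:avg}, using Assumption~\ref{ass2}) to bound $\|\nabla_{t,x}\Pi_v h\|^2_{\mathrm H^{-1}}$ by $\|\nabla_v h\|^2_{\rmL^2(U_\tau\otimes\Theta)} + \|(\partial_t+\calT)h\|^2$, the latter being itself controlled by $\|\nabla_v h\|^2$ from the equation. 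Fifth, undo the spatial weight $W$ by another Hölder/interpolation step using \eqref{int}, the bound \eqref{eq:condW}, and $\|h_0\|_\infty$, passing from the $\mu_W$-weighted norm back to $\|\Pi_v h\|^2_{\rmL^2(\mu)}$; here the interplay of the \emph{two} exponents $\sigma$ (spatial) and $\delta$ (velocity) produces the combined exponent $\tfrac{\sigma\delta}{\sigma+\delta+2}$. Sixth, collect everything into the twisted weak inequality, optimize over $\tau$ (and the interpolation parameters), and integrate the resulting differential inequality; finally convert the $\calH_\tau$ bound into a pointwise-in-$t$ bound on $\|h(t)\|_{\rmL^2(\Theta)}$ using monotonicity of $t\mapsto\|h(t)\|^2_{\rmL^2(\Theta)}$, i.e.\ $\|h(t+\tau)\|^2 \le \calH_\tau(h,t) \le \calH_\tau(h,0)\cdot(\text{decay})$.

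The main obstacle I expect is the bookkeeping of the weighted norms across the two interpolation steps, especially ensuring the functional spaces match when applying the weighted Poincaré--Lions inequality and the averaging lemma simultaneously — the Poincaré--Lions step naturally produces a norm weighted by (a power of) $W$ in space-time, while the averaging lemma is most naturally stated without weights, so reconciling them requires either a weighted averaging lemma or an extra interpolation that must not cost more than the claimed exponent. A second delicate point is that in case (ii) the velocity weight $\mathcal G$ enters \emph{both} the microscopic estimate and, through $\Pi_v$, potentially the averaging lemma, so one must check the velocity moment condition \eqref{eq:integrability_conditions_on_psi} is strong enough (fourth moments of $\nabla_v\psi$) to absorb the $\mathcal G$-weights appearing there. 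A final, more routine but error-prone, task is verifying that every constant produced is genuinely explicit: each Hölder step contributes an explicit constant, the Poincaré--Lions constant from \cite{amrouche2015lemma} must be tracked (this is presumably done in Theorem~\ref{thm:wtpi}), and the optimization over $\tau$ and the interpolation exponents must be carried out so that the final constant depends only on the listed quantities $\phi,\psi,W,\mathcal G,\sigma,\gamma,\delta,\|h_0\|_{\rmL^\infty(\Theta)}$.
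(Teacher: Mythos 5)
Your proposal follows essentially the same route as the paper: the micro/macro decomposition via $\Pi_v$, the weighted Poincar\'e--Lions inequality combined with the averaging lemma for the macroscopic part, H\"older interpolation against the $\rmL^\infty$ bound and the moment conditions \eqref{int} and $\int\mathcal{G}^\delta\dif\nu<\infty$ to undo the weights $W$ and $\mathcal{G}$, and a Bihari--LaSalle integration of the resulting nonlinear differential inequality, with the exponents $\sigma/(\sigma+2)$ and $\delta/(\delta+2)$ combining exactly as you predict. The reconciliation issue you flag is handled in the paper by stating the averaging lemma directly in the $\mu_W$-weighted norm, so no extra interpolation is needed there; otherwise your outline matches the paper's proof.
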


Note that item~\eqref{enu:thm_i} in the above theorem can be recovered from item~\eqref{enu:thm_ii} if one can choose~$\mathcal{G}=1$ and let $\delta\to\infty$. We would like to comment that the inequality \eqref{eq:wtpinu} is slightly different from the spatial version~\eqref{eq:wp}, as we are not renormalizing $\mathcal{G}^{-2} \nu(\dif v)$ to make it a probability measure, and the average subtracted is with respect to~$\nu$ and not the probability measure proportional to~$\mathcal{G}^{-2} \nu(\dif v)$. These is for simplicity in our estimates; see~\cite[Appendix A]{bouin2021hypocoercivity} for the equivalence of these two inequalities. 

\smallskip

Since~\cite{Albritton2019}, and especially after ~\cite{Cao2023, Brigati2023, dietert2022quantitative, eberle2024non}, the Poincar\'e--Lions inequality on the space-time strip~$[0,\tau] \times (\R^d,\mu)$, which states that the~$\rmL^2$ norm of functions can be controlled by the~$\mathrm{H}^{-1}$ norm of some of its derivatives, is fundamental to establish convergence rates as in Theorem~\ref{thm:weightedpiconv}. We know from~\cite{Dietert2023} that such a Poincar\'e--Lions inequality is still the crucial step in the weak confinement setting, although both the statement and the proof have to be modified. The Poincar\'e--Lions inequality is next combined with an averaging lemma in order to derive some form of Poincar\'e inequality, see Lemma~\ref{lm:avg} in Section~\ref{sec:Space-time-velocity_averaging}. 

\smallskip

We present here the weighted Poincar\'e--Lions inequality needed in our work, with constants explicitly computed (henceforth complementing~\cite{Dietert2023}). The proof of this result can be read in Section~\ref{sec:weightedpi}.

\medskip

\begin{theorem}[Weighted Poincar\'e-Lions inequality]\label{thm:wtpi}
  Suppose that the potential~$\phi$ and the weight function~$W$ satisfy Assumptions~\ref{a:wp} and~\ref{a:hessian}. Introduce the Radon--Nikodym derivative
  \[
  \zeta^2 = \frac{\dif \mu}{\dif \mu_W} = Z_W W^2, 
  \]
  and 
  \[
  \overline{g} = \iint_{[0,\tau] \times\R^d} g(t,x) \, U_\tau(\dif t) \, \mu_W(\dif x).
  \]
  Then, for any $g \in \rmL^2(U_\tau\otimes\mu_W)$,
  \begin{equation}
    \label{eq:wtpl}
    \|g-\overline{g}\|_{\rmL^2(U_\tau\otimes \mu_W)} \leq \mathrm{C}_{\mathrm{Lions}} \left( \|\zeta^{-1}\partial_t g\|_{\mathrm{H}^{-1}(U_\tau\otimes\mu)} + \|\nabla_x g\|_{\mathrm{H}^{-1}(U_\tau\otimes\mu)} \right), 
  \end{equation}
  with 
  \[
  \mathrm{C}_{\mathrm{Lions}} := \sqrt{ \left(1+\frac{2}{Z_W}\right) C_1^2 + (1 +2 \theta_W^2) \, C_0^2 },
  \]
  where $C_0,C_1$ are the exact constants appearing in~\eqref{C_0} and~\eqref{C_1}.
 \end{theorem}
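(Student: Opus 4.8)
The plan is to reduce the weighted Poincaré--Lions inequality \eqref{eq:wtpl} to the \emph{classical} (unweighted) Poincaré--Lions inequality of \cite{amrouche2015lemma}, via a change of measure argument that uses the weight $\zeta^2 = Z_W W^2$ to pass between $\mu$ and $\mu_W$, together with the pointwise logarithmic-gradient bound \eqref{eq:condW}. Concretely, for $g \in \rmL^2(U_\tau \otimes \mu_W)$ with $\overline g = 0$ (WLOG, by subtracting the mean), the Lions lemma on the strip $[0,\tau] \times \R^d$ — applied in the reference measure $U_\tau \otimes \mu$ — gives a control of $\|g\|_{\rmL^2(U_\tau \otimes \mu)}$ by the $\mathrm{H}^{-1}(U_\tau \otimes \mu)$ norms of $\partial_t g$ and $\nabla_x g$ (this is where the divergence-equation solvability of \cite{amrouche2015lemma, Albritton2015} enters, furnishing the constants $C_0, C_1$ referenced in the statement). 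The subtlety is that the left side of \eqref{eq:wtpl} is measured in $\mu_W$ while Lions' lemma most naturally produces an $\mu$-norm; since $\mathrm{d}\mu_W = Z_W^{-1} W^{-2}\,\mathrm{d}\mu$ and $W \ge 1$, we have $\|g\|_{\rmL^2(U_\tau\otimes\mu_W)}^2 \le Z_W^{-1}\|g\|_{\rmL^2(U_\tau\otimes\mu)}^2$, but the mean-zero condition is with respect to $\mu_W$, not $\mu$, so one cannot directly feed $g$ into Lions' inequality in the $\mu$-reference.

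To handle this, I would first replace $g$ by $\tilde g := g - \langle g\rangle_\mu$ where $\langle g\rangle_\mu = \iint g\,U_\tau\otimes\mu$; then Lions' lemma applies to $\tilde g$ in the $\mu$-reference, and $\nabla_x \tilde g = \nabla_x g$, $\partial_t \tilde g = \partial_t g$. The price is controlling $|\langle g\rangle_\mu - \overline g|$, i.e. the discrepancy between the $\mu$-mean and the $\mu_W$-mean of $g$, in terms of the right-hand side of \eqref{eq:wtpl}; this is a small computation using $\overline g = 0$ and Cauchy--Schwarz. Then
\[
\|g - \overline g\|_{\rmL^2(\mu_W)} \le \|\tilde g\|_{\rmL^2(\mu_W)} + |\langle g\rangle_\mu - \overline g| \le Z_W^{-1/2}\|\tilde g\|_{\rmL^2(\mu)} + (\text{discrepancy term}),
\]
and each piece is bounded by Lions applied to $\tilde g$. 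The other subtlety is the weight $\zeta^{-1}$ attached to $\partial_t g$ in \eqref{eq:wtpl}: Lions' lemma produces $\|\partial_t \tilde g\|_{\mathrm{H}^{-1}(U_\tau\otimes\mu)}$, but the statement wants $\|\zeta^{-1}\partial_t g\|_{\mathrm{H}^{-1}(U_\tau\otimes\mu)}$. Since $\zeta^{-1} = (Z_W W^2)^{-1/2} \le Z_W^{-1/2}$ is bounded below away from zero only after multiplication by the proper factor, one must track how multiplication by $\zeta^{-1}$ interacts with the $\mathrm{H}^{-1}$ norm — here the bound \eqref{eq:condW} on $|\nabla_x W|/W$ is exactly what is needed: for a test function $\varphi$, pairing $\partial_t g$ against $\varphi$ versus pairing $\zeta^{-1}\partial_t g$ against $\zeta\varphi$, and estimating $\|\nabla_x(\zeta\varphi)\|_{\rmL^2(\mu)}$ in terms of $\|\nabla_x\varphi\|_{\rmL^2(\mu)}$ and $\|\varphi\|_{\rmL^2(\mu)}$ via a product rule, using $|\nabla_x\zeta|/\zeta \le \theta_W$. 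This produces the factors $(1+2\theta_W^2)$ and $(1 + 2/Z_W)$ visible in $\mathrm{C}_{\mathrm{Lions}}$.

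I expect the main obstacle to be the bookkeeping in the $\mathrm{H}^{-1}$ estimate for the weighted time-derivative: one has to be careful because $\mathrm{H}^{-1}(U_\tau \otimes \mu)$ is a dual norm over test functions with controlled \emph{spatial} gradient (and, depending on the precise definition in the paper, controlled time regularity or not), so converting $\|\zeta^{-1}\partial_t g\|_{\mathrm{H}^{-1}}$ into something Lions' lemma controls requires commuting $\zeta^{-1}$ through the duality pairing and absorbing the $\nabla_x\zeta$ error terms — this is exactly where the hypotheses $W \ge 1$, $\int W^\sigma\,\mathrm{d}\mu < \infty$ (ensuring $Z_W \in (0,1)$), and $\sup |\nabla_x W|/W \le \theta_W$ all get used simultaneously. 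Once that estimate is in hand, assembling the constant $\mathrm{C}_{\mathrm{Lions}} = \sqrt{(1 + 2/Z_W)C_1^2 + (1 + 2\theta_W^2)C_0^2}$ is a matter of collecting the factors from the two norm conversions (measure change on the left, weight commutation on the right) and invoking the classical Lions constants $C_0, C_1$.
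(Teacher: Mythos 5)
There is a genuine gap, and it is at the very first step. Your plan rests on applying the \emph{unweighted} Poincar\'e--Lions inequality in the reference measure $U_\tau\otimes\mu$, i.e.\ on the bound $\|g-\langle g\rangle_\mu\|_{\rmL^2(U_\tau\otimes\mu)}\lesssim \|\partial_t g\|_{\mathrm{H}^{-1}(U_\tau\otimes\mu)}+\|\nabla_x g\|_{\mathrm{H}^{-1}(U_\tau\otimes\mu)}$, and then on converting norms between $\mu$ and $\mu_W$. But that unweighted inequality is \emph{false} in the setting of this theorem: restricting it to time-independent $g$ would give $\|g-\mu(g)\|_{\rmL^2(\mu)}\lesssim\|\nabla_x g\|_{\mathrm{H}^{-1}(\mu)}\le\|\nabla_x g\|_{\rmL^2(\mu)}$, i.e.\ a Poincar\'e inequality for $\mu$, which is precisely what is \emph{not} assumed here (only the weighted inequality \eqref{eq:wp} holds, and the paper points out explicitly at the start of Section~\ref{sec:div_eq} that $\nabla_x^\star\nabla_x$ has no spectral gap on $\rmL^2(\mu)$). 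Equivalently, the solvability of the unweighted divergence equation in $U_\tau\otimes\mu$, which is what \cite{amrouche2015lemma} would furnish, is unavailable. The degradation from $\rmL^2(\mu)$ on the left of a classical Lions inequality to the strictly weaker norm $\rmL^2(\mu_W)$ on the left of \eqref{eq:wtpl} is not a cosmetic change of measure that can be undone by $W\ge 1$ and $Z_W\in(0,1)$; it is the substance of the theorem. Your inequality $\|g\|^2_{\rmL^2(\mu_W)}\le Z_W^{-1}\|g\|^2_{\rmL^2(\mu)}$ goes in the right direction, but the quantity $\|g\|_{\rmL^2(\mu)}$ it produces cannot be controlled by the right-hand side of \eqref{eq:wtpl} at all.

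The paper's actual route is different in kind: it proves Proposition~\ref{prop:divergence_equation}, constructing a solution $(F_0,F_1)$ of the \emph{weighted} divergence equation $-\partial_t F_0+\zeta^2\nabla_x^\star\nabla_x F_1=g$ with homogeneous boundary data and explicit $\mathrm{H}^1$-type estimates, and then obtains \eqref{eq:wtpl} by a two-line duality computation, pairing $g$ against $-\partial_t F_0+\zeta^2\nabla_x^\star\nabla_x F_1$ in $\rmL^2(U_\tau\otimes\mu_W)$ and integrating by parts to land on $\langle\zeta^{-1}\partial_t g,\zeta^{-1}F_0\rangle+\langle\nabla_x g,\nabla_x F_1\rangle$ in $\mu$. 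The coercivity needed to solve that equation comes from the weighted Poincar\'e inequality \eqref{eq:wp} (giving $\zeta^2\nabla_x^\star\nabla_x\ge P_W^{-1}$ on $\rmL^2_0(\mu_W)$), via Lax--Milgram plus an orthogonal decomposition handling an exceptional subspace of semigroup-type functions; the constants $C_0,C_1$ in \eqref{C_0}--\eqref{C_1} come from this construction and are not the classical Lions constants. Your intuition about where $\theta_W$ and $Z_W^{-1}$ enter (a product rule on $\nabla_x(\zeta^{-1}\cdot)$ using \eqref{eq:condW}) is correct in spirit, but in the paper it is applied to the dual object $\zeta^{-1}F_0$ when estimating $\|\zeta^{-1}F_0\|_{\mathrm{H}^1(U_\tau\otimes\mu)}$, not to test functions in a commutation of $\zeta^{-1}$ through the $\mathrm{H}^{-1}$ pairing. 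Finally, your plan to control the mean discrepancy $|\langle g\rangle_\mu-\overline g|$ by the right-hand side of \eqref{eq:wtpl} is itself a weighted-Lions-type statement and cannot be dismissed as a small computation; as written it risks circularity.
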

 \medskip

As it is the case in the aforementioned works, the Poincar\'e--Lions inequality we need in this paper is achieved after studying the $\mathrm{L}^2 \to \mathrm{H}^1_0$-regularity of a divergence equation ---which is indeed an equivalent problem, see \cite{amrouche2015lemma}.
In the present work, we analyze the following, in Proposition~\ref{prop:divergence_equation},
  \begin{equation}
    \label{div.}
    \left\{ \begin{aligned}
      - \partial_t F_0 + \zeta^2\nabla_x^\star \nabla_x F_1 & = g, \\
      F_0(t=0,\cdot) = F_0(t=\tau,\cdot) & = 0, \\
      \nabla_x F_1(t=0,\cdot) = \nabla_x F_1(t=\tau,\cdot) & = 0,
    \end{aligned} \right.
  \end{equation}
where $g \in \mathrm{L}^2(U_\tau \otimes \mu)$ is given, and $F_0,F_1$ are the unknowns.

\medskip

\begin{remark}
  It is possible to prove similar results by slightly different proofs, which yield better estimates in terms of scaling of the parameters. When~$\mu$ satisfies a standard Poincar\'e inequality, an improved result was first announced in~\cite{eberle2024non}, with a proof later given in \cite{Eberle2024spacetime}.
\end{remark}

\medskip

\begin{remark}\label{rem:rem2}
  A careful inspection of the proof of Theorem~\ref{thm:weightedpiconv} shows that it is possible to remove the assumption~$h_0\in \rmL^\infty(\Theta)$ and obtain a similar result if one establishes the following moment bounds for solutions of~\eqref{VOU}:
  \begin{align*}
    \sup_{t \geq 0} \int_{\R^d \times \R^d} \left(W(x)^\sigma + \mathcal{G}(v)^\delta \right) h(t,x,v)^2 \, \Theta(\dif x \dif v) < \infty,
  \end{align*}
  provided that the initial condition satisfies 
  \[
  \int_{\mathbb R^d \times \mathbb R^d} \left(W(x)^\sigma + \mathcal{G}(v)^\delta \right) h_0^2(x,v) \, \Theta(\dif x \, \dif v) < \infty. 
  \]
  To the best of our knowledge, these moment estimates are currently open, except when $h_0 \in \mathrm{L}^\infty(\Theta)$, which follows easily from maximum principle. 
\label{rmk:remove_Linfty}
\end{remark}

\medskip

In the next paragraph we discuss an improvement of our strategy, which genuinely and heavily uses the hypothesis $h_0 \in \mathrm{L}^\infty(\Theta)$, instead of simply invoking it in order to ensure uniform moment bounds along the flow of \eqref{VOU}.

\paragraph{Convergence result for a weak Poincar\'e inequality.}
When $\phi$ and $\psi$ are such that the associated probability measures satisfy weighted Poincaré inequalities, Theorem~\ref{thm:weightedpiconv} states that the solution~$h(t)$ of \eqref{VOU} with $h_0\in \rmL^\infty(\Theta)$ converges to equilibrium with an algebraic rate. However, we know from~\cite{grothaus2019weak} that such solutions should converge with a stretched exponential rate, which cannot be easily captured by a simple modification of the proof of Theorem~\ref{thm:weightedpiconv}. This is the motivation for the results which we now present, which allow to obtain stretched exponential rates by combining the hypocoercive framework of~\cite{Albritton2019, Cao2023, Brigati2023} with weak Poincar\'e inequalities~\cite{rockner2001weak}. As shown in \cite{cattiaux2010functional}, such inequalities are a consequence of weighted Poincar\'e inequalities.

\medskip

\begin{definition}
  \label{def:WPI}
  The probability measure~$\mu$ satisfies a weak Poincar\'e inequality if, for any~$f\in \mathrm{H}^1(\mu)\cap \rmL^\infty(\mu)$ and any~$s>0$,
  \begin{equation}
    \int_{\R^d} (f - \mu(f))^2 \dif \mu \leq s \int_{\R^d}|\nabla_x f|^2 \dif \mu + \beta(s) \Phi(f),
    \label{eq:WPI}
  \end{equation}
  where~$\beta:(0,\infty) \to [0,\infty)$ is a decreasing function such that~$\beta(s)\to 0$ as $s\to \infty$, and $\Phi(f):=\|f\|^2_\osc=(\ess_\mu \sup f - \ess_\mu \inf f)^2$ is the squared oscillation.
\end{definition}

\medskip

Let us make precise the corresponding functions~$\beta$ for the benchmark cases under consideration (see Section~\ref{sec:strong_vel_confinement} for a more precise discussion):
\begin{itemize}
\item if $\phi(x) = \bangle{x}^\alpha$ for $\alpha \in (0,1)$, then $\beta(s) = \exp(-cs^{\frac{\alpha}{2(1-\alpha)}})$ for some constant $c>0$;
\item if $\phi(x) = (p+d)\log\bangle{x}$ for some $p>0$, then $\beta(s) = cs^{-\frac{p}{2}}$.
\end{itemize}

Weak Poincar\'e inequalities provide an alternative approach for establishing long-time convergence rates of overdamped Langevin dynamics \eqref{eq:overdamped} with $\rmL^\infty$ initial conditions. These inequalities were formally introduced in~\cite{rockner2001weak}, with ideas that can be traced back to~\cite{Liggett1991}, and have been well-studied for reversible processes (see for instance~\cite{Barthe2005, cattiaux2010functional}), also finding recent applications in the study of discrete-time Markov chains~\cite{Andrieu2023}. For nonreversible processes, weak Poincar\'e inequalitites have also been used previously; see~\cite{hu2019subexponential, grothaus2019weak, Andrieu2021} for underdamped Langevin and piecewise-deterministic Markov processes. In particular, improving the convergence rates of~\cite{grothaus2019weak}, which builds upon the hypocoercivity framework of~\cite{dolbeault2015hypocoercivity}, is one of the motivations behind the present work.

\begin{table}[htpb]
  \centering
  \begin{tabular}{|c|c|c|c|}
    \hline
    potential $\phi(x)$ & \makecell{$W(x)$  for weighted \\ Poincar\'e inequality \eqref{eq:wp}} & \makecell{$\beta(s)$ for weak \\ Poincar\'e Inequality \eqref{eq:WPI}} & convergence rate \\ \hline $\bangle{x}^\alpha$ for $\alpha \in (0,1)$ & \makecell{$\bangle{x}^{1-\alpha}$ \\ \cite[Proposition 3.6]{cattiaux2010functional}} & \makecell{$\exp(-cs^{\alpha/[2(1-\alpha)]})$ \\ \cite[Proposition 4.11]{cattiaux2010functional}}& $\exp(-ct^{\alpha/(2-\alpha)})$ \\ \hline $(p+d)\log \bangle{x}$ for $p>0$ & \makecell{$\bangle{x}$ \\ \cite[Proposition 3.2]{cattiaux2010functional}} & \makecell{$cs^{-p/2}$ \\ \cite[Proposition 4.9]{cattiaux2010functional}}& $ct^{-p/2}$ \\ \hline
  \end{tabular}
  \caption{Choice of weight function~$W(x)$ for the weighted Poincar\'e inequality~\eqref{eq:wp}, $\beta(s)$ for the weak Poincar\'e inequality~\eqref{eq:WPI}, and associated $\rmL^2$ convergence rate for both overdamped~\eqref{eq:overdamped} and underdamped Langevin dynamics~\eqref{VOU} for $\nu$ satisfying the Poincar\'e inequality~\eqref{eq:pinu} and an~$\rmL^\infty$ initial data, for the two benchmark examples of potential~$\phi$. Here the constant $c$ may differ from case to case.} \label{table:rates}
\end{table}

One key advantage of working with weak Poincar\'e inequalities is that they allow for a concrete characterization of the dependence of convergence rates on the tail of the potential~$\phi$; see Table \ref{table:rates} for convergence rates of \eqref{eq:overdamped} for benchmark examples of~$\phi$. The convergence rates in Table~\ref{table:rates} for overdamped Langevin can be obtained by combining the functional inequalities derived by \cite{cattiaux2010functional} and a refined version of \cite[Theorem~2.1]{rockner2001weak} as in our Section~\ref{sec:weak_Poincare_general_case}.

\smallskip

The rates in the last column of Table~\ref{table:rates} match with the lower bound for convergence estimates in total variation obtained in~\cite[Theorems~3.1 and~3.6]{brevsar2024subexponential}, which we believe are also optimal in~$\rmL^2$. In Section~\ref{sec:weakpi}, we show that identical arguments can also be used to study the underdamped Langevin dynamics~\eqref{VOU} despite the loss of coercivity, thanks to the weighted Poincar\'e--Lions inequality~\eqref{eq:wtpl} and the space-time-velocity Poincar\'e-type inequality~\eqref{eq:avg} shown later on in Lemma~\ref{lm:avg}. In particular, when~$\nu$ satisfies the Poincar\'e inequality~\eqref{eq:pinu}, our convergence rates for~\eqref{VOU} in Theorem~\ref{thm:weakPIconv} match with those of~\eqref{eq:overdamped} for the benchmark examples (see the second column of Table~\ref{tab:weakpirates}), therefore improving the estimates of~\cite{grothaus2019weak, cao2020kinetic}.

\begin{table}[htpb]
  \centering
  \begin{tabular}{|c|c|c|c|}
    \hline
    Potential & \makecell{$\psi(v) = \bangle{v}^{\delta}$ \\ $\delta\geq 1$} & \makecell{$\psi(v) = \bangle{v}^{\delta}$ \\ $\delta\in (0,1)$} & \makecell{$\psi(v) =$ \\ $ (d+q)\log\bangle{v}$} \\
    \hline  \makecell{$\phi(x) = \bangle{x}^\alpha$ \\ $\alpha\geq 1$} & \makecell{$\exp(-\lambda t)$ \\ Example~($A_1$)} & \makecell{$\exp(-ct^{\delta/(4-3\delta)})$ \\ Example~($A_1$)} & \makecell{$t^{-1/\theta(q)-}$ \\ Example~($A_2$)} \\
    \hline
    \makecell{$\phi(x) = \bangle{x}^\alpha$ \\ $\alpha< 1$} & \makecell{$\exp(-ct^{\alpha /(8-7\alpha)})$ \\ Example~($A_1$)} & \makecell{$\exp(-ct^{\alpha\delta/(4\alpha+8\delta-11\alpha\delta)})$ \\ Example~($A_1$)} & \makecell{$t^{-1/\theta(q)-}$ \\ Example~($A_2$)} \\
    \hline
    \makecell{$\phi(x) =$  \\ $(d+p)\log \bangle{x}$} & \makecell{$t^{-1/2\theta(p)-}$ \\ Example~($B_1$)} & \makecell{$t^{-1/2\theta(p)-}$ \\ Example~($B_1$)} & \makecell{$t^{-1/(2\theta(p)+\theta(q)+2\theta(p)\theta(q))-}$ \\ Example~($B_2$)}\\
    \hline 
  \end{tabular}\caption{For convenience, we report the rates obtained in~\cite[Example~1.3]{grothaus2019weak}, for direct comparison with our rates in Table~\ref{tab:weakpirates}. Here, $\theta(q):=\frac{d+q+2}{q}\wedge \frac{4q+4+2d}{(q^2-4-2d-2q)^+} \ge \tfrac{2}{q}$.} 
  \label{tab:GW_rates}
\end{table}

We now state the second main convergence result of our work.

\begin{theorem}\label{thm:weakPIconv}
  Suppose that Assumptions~\ref{a:wp}, \ref{a:hessian} and \ref{ass2} hold true, and that~$\nu$ satisfies either the Poincar\'e inequality~\eqref{eq:pinu} or the weak Poincar\'e inequality~\eqref{eq:WPI}. Denote by $h(t) : \mathbb R^d\times \mathbb R^d\to \mathbb R$ the solution at time~$t$ of \eqref{VOU}, for any initial condition $h_0\in \rmL^\infty(\Theta)$. Then there exists a positive nonincreasing function~$\mathsf F$ such that $\mathsf F(t) \to 0$ as~$t \to +\infty$ and 
  \begin{equation*}
    \forall t \geq 0, \qquad \|h(t)\|^2_{\rmL^2(\Theta)} \le \Phi(h_0) \mathsf F(t).
  \end{equation*}
  More precisely, $ \mathsf F$ is defined in \eqref{eq:sfF_def} and is related to the inverse of the function $F_{\mathfrak{a}}$ introduced in Definition~\ref{def:WPI_bits} of Section~\ref{sec:weak_Poincare_general_case}.
\end{theorem}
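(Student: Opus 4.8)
The plan is to run the variational hypocoercivity machinery of~\cite{Albritton2019, Cao2023, Brigati2023} but with the \emph{weak} Poincar\'e inequality~\eqref{eq:WPI} replacing the (weighted) one, exploiting the fact that $h_0\in\rmL^\infty(\Theta)$ gives, by the maximum principle, a uniform bound $\Phi(h(t))\le\Phi(h_0)$ on the oscillation for all $t\ge 0$. The backbone is an entropy--entropy-production estimate on the time-averaged functional $\calH_\tau(h,t)=\frac1\tau\int_t^{t+\tau}\|h(s)\|^2_{\rmL^2(\Theta)}\,\dif s$, whose derivative is $-\frac{2\gamma}{\tau}\int_t^{t+\tau}\|\nabla_v h(s)\|^2_{\rmL^2(\Theta)}\,\dif s$ by~\eqref{eq:ee2}. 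First I would establish a \emph{weak} space-time-velocity Poincar\'e inequality for solutions of~\eqref{VOU}: combining the weighted Poincar\'e--Lions inequality~\eqref{eq:wtpl} of Theorem~\ref{thm:wtpi} with the averaging lemma (Lemma~\ref{lm:avg}, stated later as~\eqref{eq:avg}) and the weak-confinement analogue of the velocity split (using either~\eqref{eq:pinu} or~\eqref{eq:WPI} in $v$), one controls $\|h\|^2_{\rmL^2(U_\tau\otimes\Theta)}$ by a term proportional to $\|\nabla_v h\|^2_{\rmL^2(U_\tau\otimes\Theta)}$ plus a small weighted-norm remainder which, via interpolation between $\rmL^2(\Theta)$ and $\rmL^\infty(\Theta)$ (controlling the weighted moments $\int W^\sigma h^2\,\dif\Theta$ and $\int\calG^\delta h^2\,\dif\Theta$ by $\|h\|^{2}_{\rmL^\infty}$ times the relevant moment of $W^\sigma$ or $\calG^\delta$), yields a bound of the form $\calH_\tau(h,t)\le s\,[\text{dissipation}] + \beta_{\mathfrak a}(s)\,\Phi(h_0)$ for every $s>0$, i.e. a weak Poincar\'e inequality for the semigroup with an explicit rate function $\beta_{\mathfrak a}$ inherited from $\beta$ and the moment exponents $\sigma,\delta$.

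Next I would turn this differential inequality into a convergence rate by a classical Gr\"onwall-type argument for weak Poincar\'e inequalities, following a refined version of~\cite[Theorem~2.1]{rockner2001weak} as promised in Section~\ref{sec:weak_Poincare_general_case}. Writing $u(t):=\calH_\tau(h,t)$, the estimates above and~\eqref{eq:ee2} give, after absorbing the time-averaging parameter $\tau$, an inequality of the shape $u(t)\le s\,\big(-c\,u'(t)\big)+\beta_{\mathfrak a}(s)\,\Phi(h_0)$ for all $s>0$; optimizing over $s$ (equivalently, choosing $s=s(t)$ so that the two terms balance along the trajectory) reduces the problem to solving an autonomous ODE whose solution is expressed through the inverse of the function $F_{\mathfrak a}(r):=\int_r^{1}\frac{\dif a}{a\,(\text{something involving }\beta_{\mathfrak a}^{-1})}$ — precisely the $F_{\mathfrak a}$ of Definition~\ref{def:WPI_bits}. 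This produces the nonincreasing function $\mathsf F$ with $\mathsf F(t)\to 0$ and $\|h(t)\|^2_{\rmL^2(\Theta)}\le\Phi(h_0)\,\mathsf F(t)$; one last step passes from the time-averaged quantity $\calH_\tau$ back to the pointwise norm $\|h(t)\|^2_{\rmL^2(\Theta)}$ using monotonicity of $t\mapsto\|h(t)\|_{\rmL^2(\Theta)}$ from~\eqref{eq:ee} (so $\|h(t+\tau)\|^2\le\calH_\tau(h,t)$), at the cost of a harmless time shift that is absorbed into $\mathsf F$.

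The main obstacle, I expect, is the derivation of the weak space-time-velocity inequality with a \emph{quantitatively good} remainder term: one must carefully track how the $\mathrm{H}^{-1}$-to-$\rmL^2$ passage in~\eqref{eq:wtpl}, which is naturally adapted to the weighted measure $\mu_W$, interacts with the oscillation seminorm $\Phi$ rather than with an $\rmL^2$ norm, and how the conjugate exponents entering the interpolation (degraded by the weight $\zeta^2=Z_WW^2$ appearing in~\eqref{eq:wtpl} and in the divergence equation~\eqref{div.}) combine with $\beta$ to produce the final rate $\beta_{\mathfrak a}$ — this is the source of the minus signs ($t^{-p/2-}$, etc.) in Table~\ref{tab:weakpirates} compared to the overdamped rates in Table~\ref{table:rates}. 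A secondary technical point is to make sure all constants remain explicit through the averaging lemma and through the choice of the (now $s$-dependent) weight split in velocity when only the \emph{weak} inequality~\eqref{eq:WPI}, and not~\eqref{eq:pinu}, holds for $\nu$; this requires a two-parameter optimization (one parameter for the $x$-split, one for the $v$-split) before the final Gr\"onwall step, which I would organize so that the $v$-contribution is handled first, reducing to the case where $\nu$ satisfies a genuine Poincar\'e inequality, and only then optimizing the $x$-part.
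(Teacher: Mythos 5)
Your overall architecture coincides with the paper's: a weak dissipation inequality $\calH(t)\le s\,\calD(t)+\beta_{\mathrm{kin}}(s)\,\Phi(h_0)$ valid for all $s>0$ (Lemma~\ref{lem:weak_Poincare_solutions_VOU}), reformulated via the convex conjugate $K^*$ of $u\mapsto u\,\beta_{\mathrm{kin}}(u^{-1})$ (Lemma~\ref{lemma:opt_WPI}), then a Bihari--LaSalle integration yielding $\calH(t)\le\Phi(h_0)\,F_{\mathfrak a}^{-1}(t)$ (Theorem~\ref{thm:conv}), and finally the time shift $\|h(t)\|^2_{\rmL^2(\Theta)}\le\calH(t-\tau)$ from monotonicity of the $\rmL^2$ norm. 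Your two-parameter optimization for the case where $\nu$ satisfies only~\eqref{eq:WPI} is likewise exactly the chaining that produces~\eqref{eq:beta_form_mod}.

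The genuine gap is in how you propose to produce $\beta_{\mathrm{kin}}$. You invoke H\"older interpolation between $\rmL^2(\Theta)$ and $\rmL^\infty(\Theta)$, bounding the weighted moments $\int W^\sigma h^2\,\dif\Theta$ and $\int\mathcal{G}^\delta h^2\,\dif\Theta$ by $\|h\|^2_{\rmL^\infty}$ times the corresponding moments of the weights. That is precisely the mechanism of the proof of Theorem~\ref{thm:weightedpiconv} (see~\eqref{eq:holder1}); it yields $\calH\lesssim\calD+\calD^{\sigma/(\sigma+2)}$ and hence, after conversion to weak form by Young's inequality, a rate function $\beta$ with only \emph{polynomial} decay in $s$ --- i.e.\ algebraic convergence, not the stretched-exponential rates of Table~\ref{tab:weakpirates}. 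The paper explicitly flags that those rates ``cannot be easily captured by a simple modification of the proof of Theorem~\ref{thm:weightedpiconv}''. What Lemma~\ref{lem:weak_Poincare_solutions_VOU} actually does is a level-set (Chebyshev-type) splitting: on $\{W<a\}$ one bounds $(\Pi_v h-\overline{\Pi_v h})^2$ by $(a^2/W^2)(\Pi_v h-\overline{\Pi_v h})^2$ and applies the weighted Poincar\'e--Lions inequality together with Lemma~\ref{lm:avg}; on $\{W\ge a\}$ one bounds the integral by $\Phi(\Pi_v h)\le\Phi(h_0)$ times the \emph{tail probability} $\mu(a\le W)$. It is the tail probability --- not the moment --- that decays like $\exp(-c\,a^{\alpha/(1-\alpha)})$ when $\phi=\langle x\rangle^\alpha$, and this is the sole source of the improvement: it gives $\beta_{\mathrm{kin}}(s)=\mu(s\le\widetilde C_{\tau,\nu,\gamma}W^2+\cdots)$ as in~\eqref{eq:beta_pi_mod}, whose decay transfers to the stretched-exponential $F_{\mathfrak a}^{-1}$. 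Without replacing your interpolation step by this splitting, your argument reproves (a variant of) Theorem~\ref{thm:weightedpiconv} rather than Theorem~\ref{thm:weakPIconv}. The rest of your outline --- the $K^*$ reformulation, the ODE argument, and the reduction of the weak-velocity case to the Poincar\'e case by handling the $v$-split first --- is sound and matches Section~\ref{sec:weak_Poincare_general_case}.
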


\medskip

\begin{remark}
  The statements of Theorems~\ref{thm:weightedpiconv} and~\ref{thm:weakPIconv} are similar, with a few key differences however, as the requirement on the initial condition is stronger for Theorem~\ref{thm:weakPIconv}, but the functional inequality satisfied by~$\nu$ is weaker. Indeed, Theorem~\ref{thm:weightedpiconv} in principle does not require $h_0\in \mathrm{L}^\infty(\Theta)$, see Remark~\ref{rmk:remove_Linfty}; while Theorem~\ref{thm:weakPIconv} crucially relies on the fact that~$h_0 \in \rmL^\infty(\Theta)$. On the other hand, Theorem~\ref{thm:weightedpiconv} requires~$\nu$ to satisfy a weighted Poincar\'e inequality, while Theorem~\ref{thm:weakPIconv} requires a weak Poincar\'e in~$\nu$, which is less demanding as a weighted Poincar\'e inequality implies a weak Poincar\'e inequality~\cite{cattiaux2010functional}.
\end{remark}

\medskip
 
While the statement of Theorem~\ref{thm:weakPIconv} looks abstract and not quantitative, we specify the convergence rates for our benchmark examples in Section~\ref{subsec:specific_cases}. Our results are summarized in Table~\ref{tab:weakpirates}. The examples we consider are inspired by those in~\cite{grothaus2019weak}, the rates we obtain being sharper than those of~\cite{grothaus2019weak} in all weak confining cases; for comparisons, see Table~\ref{tab:GW_rates}.

\medskip

\begin{remark}[Comparison with the existing literature]\label{rem:cl}
    The results in~\cite{bouin2023mathrm} provide algebraic decay rates instead of stretched exponential ones for the situations where we obtain the latter rates. At the same time, the initial conditions in~\cite{bouin2023mathrm} need not be in~$\rmL^\infty$, which the authors use only to get uniform moment bounds as it is our case for Theorem~\ref{thm:weightedpiconv}, see the discussion in  Remark~\ref{rem:rem2}.  The work~\cite{cao2020kinetic} establishes slightly worse decay rates than the ones we obtain in Example~\ref{ex1} with however weaker assumptions on~$h_0$; and it does not cover any other example that we consider here.
\end{remark}

\medskip

 When~$\nu$ is Gaussian, building upon the weak Poincar\'e inequalities obtained in~\cite{cattiaux2010functional} and the hypocoercive approach of \cite{villani_hypocoercivity_2009}, the work~\cite{hu2019subexponential} obtains convergence rates identical to the ones we obtain both for Examples~\ref{ex1} and~\ref{ex:poly_gaus}. Our results can be adapted to piecewise deterministic Markov processes, by combining our approach with previously established arguments from works such as~\cite{lu2022explicit, Andrieu2022}, but we do not consider such extensions for simplicity. 

 \smallskip

Similarly to the argument in~\cite{cattiaux2010functional} which allow to derive a weak Poincar\'e inequality from a weighted Poincar\'e inequality, one could also derive on~$[0,\tau] \times (\R^d,\mu)$ a weak Poincar\'e--Lions inequality from the weighted Poincar\'e--Lions inequality~\eqref{eq:wtpl}:
\begin{equation*}
  \left\|g - \overline{g}\right\|_{\rmL^2(U_\tau\otimes\mu)}^2 \le s\left( \|\zeta^{-1}\partial_t g\|^2_{\mathrm{H}^{-1}(U_\tau\otimes\mu)} + \|\nabla_x g\|^2_{\mathrm{H}^{-1}(U_\tau\otimes\mu)} \right) + \widetilde{\beta}(s)\Phi(f),
\end{equation*}
with~$\widetilde{\beta}$ a decreasing function such that~$\widetilde{\beta}(s) \to 0$ as $s\to \infty$ (similarly to~\eqref{eq:WPI}). While we do not explicitly state the above inequality in the proof, the proof of Theorem~\ref{thm:weakPIconv} essentially follows from the combination of the above weak Poincar\'e--Lions inequality and the averaging Lemma~\ref{lm:avg}, together with a Bihari--LaSalle argument. We however write a shorter proof as the algebra can be simplified to some extent by direct estimates.

\section{Weighted Poincar\'e--Lions inequality and averaging Lemma}
\label{sec:weightedpi}

We introduce in this section important technical tools that are used to prove both Theorems~\ref{thm:weightedpiconv} and~\ref{thm:weakPIconv}. As in \cite{Cao2023, Brigati2023, eberle2024non}, the crucial step for obtaining convergence rates is to establish the weighted Poincar\'e--Lions inequality given by Theorem~\ref{thm:wtpi}. This is based on constructing a solution to a divergence equation in the space-time strip~$[0,\tau]\times \R^d$, which is the statement of Proposition~\ref{prop:divergence_equation} (see Section~\ref{sec:div_eq}). This key proposition is then proved in Section~\ref{sec:proof_wtpi}. We conclude this section by stating and proving, in Section~\ref{sec:Space-time-velocity_averaging}, an important intermediate result, namely a space-time-velocity averaging lemma.

\subsection{Solution to divergence equation and proof of Theorem~\ref{thm:wtpi}}
\label{sec:div_eq}

Since the operator~$\nabla_x^\star \nabla_x$ does not admit a spectral gap on~$L^2(\mu)$ (as this would be equivalent to~$\mu$ satisfying a Poincar\'e inequality), one cannot directly follow the approach of~\cite{Brigati2023} and instead needs to consider a modified divergence equation where the weight is accounted for. 

\medskip

\begin{proposition}
  \label{prop:divergence_equation}
  Suppose that the potential~$\phi$ and the weight function~$W$ satisfy Assumptions~\ref{a:wp} and~\ref{a:hessian}, and consider a function~$g \in \rmL^2(U_\tau \otimes \mu_W)$ such that~$\overline{g} = 0$. Then there exist functions $F_0, F_1$ satisfying the equation
  \begin{equation}
    \label{div}
    \left\{ \begin{aligned}
      - \partial_t F_0 + \zeta^2\nabla_x^\star \nabla_x F_1 & = g, \\
      F_0(t=0,\cdot) = F_0(t=\tau,\cdot) & = 0, \\
      \nabla_x F_1(t=0,\cdot) = \nabla_x F_1(t=\tau,\cdot) & = 0,
    \end{aligned} \right.
  \end{equation}
  with estimates
  \begin{equation}
    \label{eq:divestL2}
    \|F_0\|_{\rmL^2(U_\tau \otimes \mu_W)} + \|\nabla_x F_1\|_{\rmL^2(U_\tau \otimes \mu)} \leq C\left(\tau +  \sqrt{\frac{P_W}{1-R_{W,\tau}}} \right) \|g\|_{\rmL^2(U_\tau \otimes \mu_W)},
  \end{equation}
  and 
  \begin{multline} \label{eq:divestH1}
    \|\partial_t F_0\|_{\rmL^2(U_\tau \otimes \mu_W)} + \|\nabla_x F_0\|_{\rmL^2(U_\tau \otimes \mu)} + \|\nabla_x \partial_t F_1\|_{\rmL^2(U_\tau \otimes \mu)} +  \|\nabla_{xx}^2 F_1\|_{\rmL^2(U_\tau \otimes \mu)} \\
    \leq C\left[\sqrt{M}\tau + \frac{1}{\sqrt{1-R_{W,\tau}}}\left(Z_W^{-1/2}+ \sqrt{MP_W} + \frac{1}{1-\rme^{-\tau/\sqrt{P_W}}}\right)\right]\|g\|_{\rmL^2(U_\tau \otimes \mu_W)},
  \end{multline}
  where $C$ denote universal constants which can be made explicit (see~\eqref{C_0}-\eqref{C_1}), and
  \begin{equation}\label{eq:Rwtau}
    R_{W,\tau} = \frac{2 \tau \rme^{-\tau/\sqrt{P_W}}}{\sqrt{P_W}(1-\rme^{-2\tau/\sqrt{P_W}})} \in (0,1).
  \end{equation}
\end{proposition}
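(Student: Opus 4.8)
The key is to read the weighted Poincar\'e inequality~\eqref{eq:wp} as a spectral gap. On $\rmL^2(\mu_W)$ introduce the nonnegative self-adjoint operator $\Lop_W := \zeta^2\,\nabla_x^\star\nabla_x$, attached to the closed form $f\mapsto\int_{\Rd}|\nabla_x f|^2\,\dif\mu$ (recall $\dif\mu=\zeta^2\,\dif\mu_W$, so this form is finite precisely on $f\in\rmL^2(\mu_W)$ with $\nabla_x f\in\rmL^2(\mu)$). Then~\eqref{eq:wp} says exactly that $0$ is a simple eigenvalue of $\Lop_W$, with the constants as eigenspace, and $\mathrm{spec}(\Lop_W)\setminus\{0\}\subset[1/P_W,+\infty)$. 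Writing $\mathcal H_0\subset\rmL^2(\mu_W)$ for the orthogonal complement of the constants, this yields $\|\Lop_W^{-1/2}\|_{\mathcal H_0\to\mathcal H_0}\le\sqrt{P_W}$ and, through the scalar bound $\lambda\ge1/P_W$, explicit bounds for every bounded function of $\Lop_W$ used below: in particular $\|\rme^{-\tau\sqrt{\Lop_W}}\|_{\mathcal H_0}\le\rme^{-\tau/\sqrt{P_W}}$, $\|(1-\rme^{-\tau\sqrt{\Lop_W}})^{-1}\|_{\mathcal H_0}\le(1-\rme^{-\tau/\sqrt{P_W}})^{-1}$, and, since $y\mapsto y/\sinh y$ is decreasing and $R_{W,\tau}=\tfrac{\tau/\sqrt{P_W}}{\sinh(\tau/\sqrt{P_W})}$, $\big\|\tfrac{\tau\sqrt{\Lop_W}}{\sinh(\tau\sqrt{\Lop_W})}\big\|_{\mathcal H_0}=R_{W,\tau}$, so that boundary operators of the form $\big(I-c\,\tfrac{\tau\sqrt{\Lop_W}}{\sinh(\tau\sqrt{\Lop_W})}\big)^{-1}$ with $c\in[0,1]$ have $\mathcal H_0$-norm at most $(1-R_{W,\tau})^{-1}$. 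This is the sole origin of the constants $\rme^{-\tau/\sqrt{P_W}}$ and $R_{W,\tau}$ in~\eqref{eq:divestL2}--\eqref{eq:divestH1}. Second-order spatial control of $\Lop_W^{-1}$ comes from Bochner's identity $\int(\nabla_x^\star\nabla_xw)^2\,\dif\mu=\int|\nabla^2_{xx}w|^2\,\dif\mu+\int\nabla^2_{xx}\phi(\nabla_xw,\nabla_xw)\,\dif\mu$ together with $-M\,\Id\le\nabla^2_{xx}\phi$ from Assumption~\ref{a:hessian} (the upper Hessian bound and the gradient bound on $\phi$ being what make this identity and the underlying $\rmL^2\to\mathrm H^2$ elliptic regularity rigorous).

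Since $\overline g=0$, split $g=g_{\mathrm m}+g_{\mathrm{osc}}$, with $g_{\mathrm m}(x):=\tfrac1\tau\int_0^\tau g(t,x)\,\dif t\in\mathcal H_0$ (and $\|g_{\mathrm m}\|_{\rmL^2(\mu_W)}\le\|g\|_{\rmL^2(U_\tau\otimes\mu_W)}$) and $g_{\mathrm{osc}}:=g-g_{\mathrm m}$ of vanishing time-average for $\mu_W$-a.e.\ $x$; by linearity it suffices to solve~\eqref{div} for each part, keeping $F_0,F_1$ of zero $\mu_W$-average at each time. \emph{Elliptic part $g_{\mathrm m}$.} Start from the time-independent particular solution $F_1^{\sharp}:=\Lop_W^{-1}g_{\mathrm m}$ (which solves $-\partial_tF_0+\Lop_WF_1^{\sharp}=g_{\mathrm m}$ with $F_0=0$); its $x$-regularity is exactly what the spectral gap and Bochner give: $\|\nabla_xF_1^{\sharp}\|_{\rmL^2(\mu)}^2=\langle g_{\mathrm m},\Lop_W^{-1}g_{\mathrm m}\rangle_{\rmL^2(\mu_W)}\le P_W\|g_{\mathrm m}\|_{\rmL^2(\mu_W)}^2$, and, using $\nabla_x^\star\nabla_xF_1^{\sharp}=\zeta^{-2}g_{\mathrm m}$ with $\zeta^{-2}\le Z_W^{-1}$ (as $W\ge1$), $\|\nabla^2_{xx}F_1^{\sharp}\|_{\rmL^2(\mu)}^2\le(Z_W^{-1}+MP_W)\|g_{\mathrm m}\|_{\rmL^2(\mu_W)}^2$. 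To enforce the endpoint conditions, add a corrector $(w_0,w_1)$ solving the homogeneous equation $-\partial_tw_0+\Lop_Ww_1=0$ with $w_1(0,\cdot)=w_1(\tau,\cdot)=-\Lop_W^{-1}g_{\mathrm m}$ and $w_0(0,\cdot)=w_0(\tau,\cdot)=0$, built by functional calculus as $w_1(t)=\psi(t,\Lop_W)\,\Lop_W^{-1}g_{\mathrm m}$, $w_0(t)=\chi(t,\Lop_W)\,\Lop_W^{-1}g_{\mathrm m}$, where the scalar profiles solve the two-point problem $\partial_t\chi=\lambda\psi$, $\psi(0,\lambda)=\psi(\tau,\lambda)=-1$, $\chi(0,\lambda)=\chi(\tau,\lambda)=0$ and are the explicit combinations of $\cosh(\sqrt\lambda(t-\tfrac\tau2))$ and $1$ obtained by inverting the boundary matrix of the homogeneous solutions $\{\cosh(\sqrt\lambda\,\cdot),\sinh(\sqrt\lambda\,\cdot)\}$ on $[0,\tau]$. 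Then $F_1:=F_1^{\sharp}+w_1$ vanishes at $t\in\{0,\tau\}$ (hence $\nabla_xF_1=0$ there), $F_0:=w_0$ vanishes at $t\in\{0,\tau\}$, and $-\partial_tF_0+\Lop_WF_1=g_{\mathrm m}$; bounding $\psi(t,\Lop_W)$, $\Lop_W^{1/2}\psi(t,\Lop_W)\Lop_W^{-1/2}$ and $\chi(t,\Lop_W)$ on $\mathcal H_0$ via $\lambda\ge1/P_W$ is what contributes the $(1-R_{W,\tau})$- and $(1-\rme^{-\tau/\sqrt{P_W}})$-dependent factors. \emph{Transport part $g_{\mathrm{osc}}$.} Here $t\mapsto\int_0^tg_{\mathrm{osc}}(s,\cdot)\,\dif s$ already vanishes at $t=0,\tau$, so $F_0=-\int_0^tg_{\mathrm{osc}}$, $F_1=0$ solves~\eqref{div} with the right boundary conditions --- but then $\nabla_xF_0\notin\rmL^2$, so one must transfer the non-smooth part of $F_0$ into $\Lop_WF_1$; this is done by the same type of $\Lop_W$-functional-calculus construction (essentially convolving in time against the Green function of $-\partial_t^2+\Lop_W$ on $[0,\tau]$ with Dirichlet-in-time conditions, whose kernel is built from $\sinh(\sqrt{\Lop_W}\,\cdot)$), producing the same families of operator bounds plus, by Cauchy--Schwarz in time on the $\int_0^t$-terms, the prefactors $\tau$ and $\sqrt M\,\tau$.

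Assembling~\eqref{eq:divestL2}--\eqref{eq:divestH1} is then bookkeeping: Cauchy--Schwarz in time (and the curvature term $M\int|\nabla_x(\cdot)|^2$ in Bochner on the time-primitive corrections) gives $\tau$ and $\sqrt M\,\tau$; $\|\Lop_W^{-1/2}\|_{\mathcal H_0}\le\sqrt{P_W}$ together with the boundary-matrix inversion gives $\sqrt{P_W/(1-R_{W,\tau})}$; the $\rmL^2\to\mathrm H^2$ elliptic regularity of the Witten Laplacian (Bochner $+$ $|\nabla^2_{xx}\phi|\le M$, $\zeta^{-2}\le Z_W^{-1}$) gives $Z_W^{-1/2}+\sqrt{MP_W}$; and the $\sinh(\sqrt{\Lop_W}\,\tau)^{-1}$-type operators give $(1-\rme^{-\tau/\sqrt{P_W}})^{-1}$. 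The genuine difficulty is the construction, not the bookkeeping: the two endpoint conditions $F_0|_{t\in\{0,\tau\}}=0$ and $\nabla_xF_1|_{t\in\{0,\tau\}}=0$ over-determine the single first-order-in-$t$ equation~\eqref{div}, so neither a plain time-primitive of $g$ (which gains no $x$-regularity, so $\nabla_xF_0\notin\rmL^2$) nor a plain spatial elliptic inverse of $g$ (which does not vanish at the time-endpoints) can work on its own; reconciling them forces the split $g_{\mathrm m}+g_{\mathrm{osc}}$ and the $\sqrt{\Lop_W}$ correctors above, and the delicate point is to carry every constant explicitly through this, in particular the degeneracy as $\tau\to0$, which is exactly what $R_{W,\tau}\to1$ and $(1-\rme^{-\tau/\sqrt{P_W}})^{-1}\sim\sqrt{P_W}/\tau$ record.
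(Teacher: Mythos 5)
Your overall architecture differs from the paper's: you split $g$ into its time-mean $g_{\mathrm m}$ and the oscillating remainder $g_{\mathrm{osc}}$ and build boundary correctors by functional calculus in $\sqrt{\Lop_W}$, whereas the paper solves the Neumann problem $-\partial_{tt}^2 u+\zeta^2\nabla_x^\star\nabla_x u=g$ (see~\eqref{ellipt}), sets $(F_0,F_1)=(\partial_t u,u)$, and shows that the boundary conditions on $\nabla_x F_1$ hold automatically when $g$ is orthogonal to the exceptional space~$\mathcal N$ of~\eqref{eq:spaceN} spanned by the profiles $\rme^{-tL}z_+ + \rme^{-(\tau-t)L}z_-$, which are then treated by explicit formulas; the constant $R_{W,\tau}=\frac{\tau/\sqrt{P_W}}{\sinh(\tau/\sqrt{P_W})}$ arises there from the almost-orthogonality of $\rme^{-tL}g_+$ and $\rme^{-(\tau-t)L}g_-$, not from a boundary-matrix inversion. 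Your spectral-gap setup for $\Lop_W$, the identification of $R_{W,\tau}$, and the Bochner-based $\mathrm H^2$ control are all consistent with the paper, and your treatment of the elliptic part $g_{\mathrm m}$ (particular solution $\Lop_W^{-1}g_{\mathrm m}$ plus a two-parameter corrector in $\cosh(\sqrt\lambda(t-\tau/2))$ and $1$) can be made to work.

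The genuine gap is the oscillating part. The pair $F_0=-\int_0^t g_{\mathrm{osc}}$, $F_1=0$ does satisfy~\eqref{div} and the $\rmL^2$ bound, but the whole difficulty of~\eqref{eq:divestH1} is to produce a \emph{single} pair $(F_0,F_1)$ for which $\nabla_x F_0\in\rmL^2(U_\tau\otimes\mu)$, and your sketch does not actually produce one. The proposed fix --- inverting $-\partial_{tt}^2+\Lop_W$ with Dirichlet-in-time conditions $u(0,\cdot)=u(\tau,\cdot)=0$ and taking $F_1=u$, $F_0=\partial_t u+c(x)$ --- makes $\nabla_x F_1$ vanish at the endpoints but breaks the condition $F_0(0,\cdot)=F_0(\tau,\cdot)=0$: integrating the equation in time gives $\partial_t u(\tau,\cdot)-\partial_t u(0,\cdot)=\zeta^2\nabla_x^\star\nabla_x\int_0^\tau u\,\dif t$, which is not zero in general, so no additive constant-in-time correction can kill both endpoint values of $F_0$; and any attempt to absorb the defect into $F_1$ by a time-independent function reintroduces a nonzero $\nabla_x F_1$ at $t\in\{0,\tau\}$. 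This is exactly the over-determination you yourself flag as ``the genuine difficulty'', and the sentence ``this is done by the same type of $\Lop_W$-functional-calculus construction'' leaves it unresolved. The paper's exceptional-space decomposition (Steps 3--5 of its proof) is precisely the device that reconciles the two sets of endpoint conditions, and some equivalent mechanism would have to be supplied here before the proposal counts as a proof. A secondary point: the Proposition's constants are the specific $C_0,C_1$ of~\eqref{C_0}--\eqref{C_1}, which feed into $\mathrm{C}_{\mathrm{Lions}}$ in Theorem~\ref{thm:wtpi}, so even once the construction closes you would need to track your (different) boundary-matrix constants explicitly and check they have the same $\tau,P_W,M,Z_W$ scaling.
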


The solutions to~\eqref{div} are of course not unique. Proposition~\ref{prop:divergence_equation} allows to prove Theorem~\ref{thm:wtpi} as follows.

\begin{proof}[Proof of Theorem~\ref{thm:wtpi}]
  The result is proved by $\rmL^2(U_\tau\otimes\mu_W)$-duality. Let us denote by~$C_0$ and~$C_1$ the constants respectively in \eqref{eq:divestL2} and \eqref{eq:divestH1}, which are made fully explicit in~\eqref{C_0}-\eqref{C_1}. The estimates~\eqref{eq:divestL2} and~\eqref{eq:divestH1} together with \eqref{eq:condW} imply that, for any $g\in \rmL^2(U_\tau\otimes\mu_W)$ with $\overline{g}=0$, the solutions~$F_0,F_1$ in~\eqref{div} satisfy $\zeta^{-1} F_0 \in \mathrm{H}^1(U_\tau\otimes\mu)$ and~$\nabla_x F_1 \in \mathrm{H}^1(U_\tau\otimes\mu)^d$, with 
  \[
  \begin{aligned}
    \|\zeta^{-1} F_0\|_{\mathrm{H}^{1}(U_\tau\otimes\mu)}^2 
    & \leq  \|F_0\|_{\rmL^2(U_\tau\otimes\mu_W)}^2 + \|\partial_t F_0\|_{\rmL^2(U_\tau\otimes\mu_W)}^2 \\
    &\quad + \left( \sup_{\R^d} \frac{| \nabla_x \zeta |}{\zeta} \|F_0\|_{\rmL^2(U_\tau\otimes\mu_W)} + \|\zeta^{-1} \nabla_x F_0\|_{\rmL^2(U_\tau\otimes\mu)} \right)^2 \\
    & \leq \|F_0\|_{\rmL^2(U_\tau \otimes \mu_W)}^2 + \|\partial_t F_0\|_{\rmL^2(U_\tau\otimes\mu_W)}^2  + \frac{2}{Z_W}\|\nabla_x F_0\|_{\rmL^2(U_\tau\otimes\mu)}^2 \\ &\quad+ 2 \left( \sup_{\R^d} \frac{\left|\nabla_x W\right|^2}{W^2}\right) \|F_0\|_{\rmL^2(U_\tau\otimes\mu_W)}^2 \\
    & \leq \left[ \left(1+\frac{2}{Z_W}\right) C_1^2 + (1 +2 \theta_W^2) \, C_0^2 \right] \|g\|_{\rmL^2(U_\tau \otimes \mu_W)}^2;
  \end{aligned}
  \]
  and 
  \[
  \|\nabla_x F_1\|^2_{\mathrm H^1(U_\tau \otimes \mu)} \leq (C_0^2+C_1^2) \, \|g\|_{\rmL^2(U_\tau \otimes \mu_W)}^2.
  \]
  Therefore, using integration by parts and~\eqref{C_0} as well as the above estimate,
  \begin{align*}
    & \|g\|_{\rmL^2(U_\tau\otimes\mu_W)}^2 \stackrel{\eqref{div}}{=} \iint_{[0,\tau] \times\R^d} g(- \partial_t F_0 + \zeta^2\nabla_x^\star \nabla_x F_1) \dif U_\tau\dif \mu_W \\
    & \qquad = \iint_{[0,\tau] \times\R^d} (\zeta^{-1} \partial_t  g \, \zeta^{-1} F_0 + \nabla_x g \cdot \nabla_x F_1) \dif U_\tau\dif \mu \\
    & \qquad \le \left\|\zeta^{-1}\partial_t g\right\|_{\mathrm{H}^{-1}(U_\tau\otimes\mu)} \left\|\zeta^{-1}F_0 \right\|_{\mathrm{H}^{1}(U_\tau\otimes\mu)} + \|\nabla_x g\|_{\mathrm{H}^{-1}(U_\tau\otimes\mu)}\|\nabla_x F_1\|_{\mathrm{H}^{1}(U_\tau\otimes\mu)} \\
    & \qquad \leq \mathrm{C}_{\mathrm{Lions}}  \|g\|_{\rmL^2(U_\tau\otimes\mu_W)}\left(\left\|\zeta^{-1}\partial_t g\right\|_{\mathrm{H}^{-1}(U_\tau\otimes\mu)} + \|\nabla_x g\|_{\mathrm{H}^{-1}(U_\tau\otimes\mu)} \right),
  \end{align*}
  where we introduced 
  \[
  \mathrm{C}_{\mathrm{Lions}} = \sqrt{ \left(1+\frac{2}{Z_W}\right) C_1^2 + (1 +2 \theta_W^2) \, C_0^2 }.
  \]
  This concludes the proof of~\eqref{eq:wtpl}.
\end{proof}

\subsection{Proof of Proposition~\ref{prop:divergence_equation}}
\label{sec:proof_wtpi}

As laid out in \cite{Cao2023, Brigati2023}, we divide the proof of Proposition~\ref{prop:divergence_equation} into five steps.

\subsubsection*{Step 1: Tensorized weighted space-time Poincar\'e inequality}

Consider any function $z(t,x)$ such that $\partial_t z, \nabla_x z \in \rmL^2(U_\tau \otimes \mu)$ with~$\overline{z}=0$, and denote by
\[
Z(x) = \int_0^\tau z(t,x) \, U_\tau(\dif t).
\]
In view of the standard Poincar\'e--Wirtinger inequality for $U_\tau$ and~\eqref{eq:wp} (noting that~$\mu_W(Z) = 0$),
\begin{align}
  \|z  \|^2_{\rmL^2(U_\tau \otimes \mu_W)} &=  
  \|z-Z\|^2_{\rmL^2(U_\tau \otimes \mu_W)} + \|Z \|^2_{\rmL^2(\mu_W)} \notag \\
  & \leq \frac{\tau^2}{\pi^2}\, \|\partial_t z \|^2_{\rmL^2(U_\tau \otimes \mu_W)} + P_W \, \|\nabla_x Z \|^2_{\mathrm{L}^2(\mu)} \notag\\
  &\leq \frac{\tau^2}{\pi^2} \,\|\partial_t  z\|^2_{\mathrm{L}^2(U_\tau \otimes \mu_W)} + P_W \, \|\nabla_x z\|^2_{\mathrm{L}^2(U_\tau \otimes \mu)} \notag\\
  & \leq \max \left(\frac{\tau^2}{\pi^2} ,  P_W \right) \left( \|\partial_t  z\|^2_{\mathrm{L}^2(U_\tau \otimes \mu_W)} +\|\nabla_x z\|^2_{\mathrm{L}^2(U_\tau \otimes \mu)}  \right), 
  \label{eq:tensorwpi}
\end{align} 
where we used a Cauchy--Schwarz inequality to obtain the second inequality.

\subsubsection*{Step 2: An elliptic estimate} 

We would like to find a solution to the equation~$-\partial_t F_0 + \zeta^2\nabla_x^\star \nabla_x F_1 = g$ for~$g \in \mathrm{L}^2(U_\tau \otimes \mu_W)$ with~$\overline{g} = 0$. A natural way is to look for~$F$ of the form~$F_0=\partial_t u$ and~$F_1 = u$ for some function~$u$ (although the boundary conditions on~$\nabla_x F_1$ will not necessarily be satisfied this way; we take care of this issue later on). The key observation here is that the operator $-\partial^2_{tt} + \zeta^2 \nabla_x^\star \nabla_x$ with Neumann boundary conditions is self-adjoint and positive definite over the space of functions~$u \in \mathrm{L}^2(U_\tau \otimes \mu_W)$ with~$\overline{u}=0$, so that one can solve the equation using a weak formulation and Lax--Milgram's theorem.

\smallskip

More precisely, we claim that we can can find a solution to the equation
\begin{equation}
    \label{ellipt}
    \begin{cases}
    -\partial^2_{tt} u + \zeta^2\nabla_x^\star \nabla_x  u = g, \\
   \partial_t u(0,\cdot) = \partial_t u(\tau,\cdot) = 0, 
   \end{cases}
\end{equation}
which satisfies the estimates
\begin{equation}\label{eq:elliptestL2}
    \|\partial_t u\|_{\rmL^2(U_\tau\otimes\mu_W)}^2 + \|\nabla_x u\|_{\rmL^2(U_\tau\otimes\mu)}^2 \le \max \left(\frac{\tau^2}{\pi^2}, P_W\right) \|g\|_{\rmL^2(U_\tau\otimes\mu_W)}^2,
\end{equation}
and 
\begin{equation}
  \label{eq:elliptestH1}
  \begin{split}
    \|\partial_{tt}^2 u\|_{\rmL^2(U_\tau\otimes\mu_W)}^2 +  \|\nabla_x \partial_{t} u\|_{\rmL^2(U_\tau\otimes\mu)}^2 +\|\nabla^2_{xx} u\|_{\rmL^2(U_\tau\otimes\mu)}^2\\
    \le \left[ 1+4Z_W^{-1} +M\max \left(\frac{\tau^2}{\pi^2} ,  P_W \right) \right] \|g\|_{\rmL^2(U_\tau\otimes\mu_W)}^2.
  \end{split}
\end{equation}
The equation \eqref{ellipt} has the following weak formulation: define the Hilbert space
\begin{equation}\label{eq:spaceV}
    V := \left \{u \in \rmL^2(U_\tau\otimes\mu_W) \, \middle| \, \overline{u}=0, \ \partial_t u \in \rmL^2(U_\tau\otimes \mu_W), \ \nabla_x u \in \rmL^2(U_\tau\otimes \mu) \right\},
\end{equation}
with associated norm
\begin{equation}\label{eq:Vnorm}
  \|u\|_V^2 = \|\partial_t u\|^2_{\rmL^2(U_\tau\otimes\mu_W)} + \|\nabla_x u\|^2_{\rmL^2(U_\tau\otimes\mu)}.
\end{equation}
The tensorized weighted Poincar\'e inequality~\eqref{eq:tensorwpi} indeed ensures that~$V \subseteq \rmL^2(U_\tau\otimes\mu_W)$. 
A function~$u\in V$ is a weak solution of~\eqref{ellipt} if and only if
\begin{equation}
  \label{eq:wkfmellipt}
  \begin{aligned}
    \forall v \in V, \qquad \iint_{[0,\tau] \times \R^d} \partial_t u \, \partial_t v \dif U_\tau \dif \mu_W & + \iint_{[0,\tau] \times \R^d} \nabla_x u \cdot \nabla_x v \dif U_\tau \dif \mu \\
    & = \iint_{[0,\tau] \times \R^d} gv \dif U_\tau \dif \mu_W.
  \end{aligned}
\end{equation}
Note that the left hand side of the above equality is a continuous bilinear form on~$V \times V$, which is coercive by definition of the norm~\eqref{eq:Vnorm}; while the right hand side is a linear form which is continuous in view of the tensorized weighted Poincar\'e inequality~\eqref{eq:tensorwpi}. The Lax--Milgram theorem therefore ensures that~\eqref{eq:wkfmellipt} has a unique solution~$u\in V$. Moreover, from the weak formulation~\eqref{eq:wkfmellipt},
\begin{align*}
  \|\partial_t u\|_{\rmL^2(U_\tau\otimes \mu_W)}^2 + \|\nabla_x u\|_{\rmL^2(U_\tau\otimes \mu)}^2 & = \iint_{[0,\tau]\times\R^d} ug \dif U_\tau \dif\mu_W \\
  & \leq \|u\|_{\rmL^2(U_\tau \otimes \mu_W)}\|g\|_{\rmL^2(U_\tau \otimes \mu_W)},
\end{align*}
so that the weighted space-time Poincar\'e inequality~\eqref{eq:tensorwpi} yields the estimate~\eqref{eq:elliptestL2}.

\smallskip
 
We now establish the claimed~$\mathrm{H}^2$ estimates~\eqref{eq:elliptestH1}. We first prove that $\partial_{tt}^2 u \in \rmL^2(U_\tau \otimes \mu_W)$ and $\nabla_x \partial_t u \in \rmL^2(U_\tau \otimes \mu)$. Note that $w_0=\partial_t u$ satisfies the following PDE with Dirichlet boundary conditions:
\begin{equation}\label{eq:dttellipt}
     \left\{ \begin{aligned}
    -\partial^2_{tt} w_0 + \zeta^2\nabla_x^\star \nabla_x  w_0 & = \partial_t g, \\
   w_0(0,\cdot) = w_0(\tau,\cdot) & = 0. 
   \end{aligned} \right.
\end{equation}
Its weak formulation requires to introduce a slightly different Hilbert space than~\eqref{eq:spaceV}, namely
\begin{equation*}
  \widetilde{V} := \left\{ w \in \rmL^2(U_\tau\otimes \mu_W) \ \middle| \ w(0,\cdot) = w(\tau,\cdot)=0, \ \partial_t w \in \rmL^2(U_\tau\otimes \mu_W), \ \nabla_x w \in \rmL^2(U_\tau\otimes \mu) \right\},
\end{equation*}
still endowed with the norm~\eqref{eq:Vnorm}. Note that the Poincar\'e inequality for $\mathrm{H}^1$ functions of time vanishing at~$t=0$ and~$t=\tau$, integrated in the~$x$ variable, ensures that functions~$w \in \widetilde{V}$ satisfy
\begin{equation}
\label{eq:bounds_L2_w_Poincare_space_time}
\iint_{[0,\tau] \times \R^d} w^2 \dif U_\tau \dif \mu_W \leq \frac{\tau^2}{\pi^2} \iint_{[0,\tau] \times \R^d} \left(\partial_t w\right)^2 \dif U_\tau \dif \mu_W \leq \frac{\tau^2}{\pi^2} \|w\|_V^2.
\end{equation}
Therefore, one can again apply Lax--Milgram's theorem to the weak formulation associated with~\eqref{eq:dttellipt} and obtain that there exists a unique~$w_0 \in \widetilde{V}$ solution of this weak formulation for~$g\in \rmL^2(U_\tau \otimes \mu_W) \subset \widetilde{V}^\star$. The latter inclusion follows from~\eqref{eq:bounds_L2_w_Poincare_space_time} as
\[
\left| \iint_{[0,\tau] \times \R^d} g v \dif U_\tau \dif \mu_W \right| \leq \|g\|_{\rmL^2(U_\tau \otimes \mu_W)}\|v\|_{\rmL^2(U_\tau \otimes \mu_W)} \leq \frac{\tau}{\pi}\|g\|_{\rmL^2(U_\tau \otimes \mu_W)}\|v\|_V.
\]
The comparison of~\eqref{ellipt} and~\eqref{eq:dttellipt} shows that~$w_0=\partial_t u$. Testing \eqref{eq:dttellipt} against~$\partial_t u$ itself leads to
\begin{align*}
  \|\partial_t u\|^2_{V} = \iint_{[0,\tau] \times \R^d} \Big(|\zeta^{-1} \partial_{tt}^2 u|^2 + |\nabla_x\partial_t u|^2\Big) \dif U_\tau \dif\mu = \langle \partial_t g, \partial_t u\rangle_{\widetilde{V}^\star,\widetilde V} \leq \|\partial_t g\|_{\widetilde V^\star} \|\partial_t u\|_{V}.
\end{align*}
Now,
\begin{align*}
\|\partial_t g\|_{\widetilde V^\star} = \sup_{\|z\|_{\widetilde V} \leq 1} \, \langle \partial_t g, z\rangle_{\widetilde V^\star,\widetilde V} = \sup_{\|z\|_{\widetilde V} \leq 1} \, ( g, \partial_t z )_{\mathrm{L}^2(U_\tau \otimes \mu_W)} \leq \|g\|_{\mathrm{L}^2(U_\tau \otimes \mu_W)},
\end{align*}
so that
\begin{equation}
  \label{eq:est2}
  \|\partial_t u\|^2_{V} = \|\partial^2_{tt} u\|^2_{\mathrm{L}^2(U_\tau \otimes \mu_W)} + \|\nabla_x \partial_t u\|^2_{\mathrm{L}^2(U_\tau \otimes \mu)} \leq \|g\|^2_{\mathrm{L}^2(U_\tau \otimes \mu_W)}.
\end{equation}

\smallskip

It remains to show that $\nabla_{xx}^2 u \in \rmL^2(U_\tau\otimes\mu)$. Note first that~$\zeta^2 \nabla_x^\star\nabla_x u = g + \partial_{tt}^2 u \in \rmL^2(U_\tau\otimes\mu_W)$. Therefore, by the triangle inequality and \eqref{eq:est2},
\begin{equation}\label{eq:boundLu}
  \|\zeta^2 \nabla_x^\star\nabla_x u\|_{\rmL^2(U_\tau\otimes\mu_W)}^2 \le   4\|g\|_{\rmL^2(U_\tau\otimes\mu_W)}^2.
\end{equation}
Formally differentiating~\eqref{ellipt} with respect to~$\partial_{x_i}$ for~$1 \leq i \leq d$, we find that $w_i = \partial_{x_i} u$ satisfies the equation
\begin{equation}\label{eq:difellipt}
     \begin{cases}
    -\partial^2_{tt} w_i + \zeta^2\nabla_x^\star \nabla_x  w_i = \partial_{x_i}g -2\zeta \partial_{x_i} \zeta \nabla_x^\star \nabla_x u-\zeta^2 \nabla_x \partial_{x_i} \phi\cdot \nabla_x u =:I_i, \\
   \partial_t w_i(0,\cdot) = \partial_t w_i(\tau,\cdot) = 0.
   \end{cases}
\end{equation}
The latter equation has a unique weak solution in~$V$ provided~$I_i \in V^\star$ with~$\overline{I_i}=0$. In this case, since~$w_i$ and~$\partial_{x_i} u - \overline{\partial_{x_i} u}$ solve the same PDE and both have a vanishing integral with respect to~$U_\tau \otimes \mu_W$, they are equal in the sense of distributions, and finally~$\partial_{x_i} u - \overline{\partial_{x_i} u} \in V$, so that~$\partial_{x_i,x_j}^2 u \in \rmL^2(U_\tau\otimes\mu)$ for all~$1 \leq i,j \leq d$. In order to prove that~$I_i \in V^\star$, we consider an arbitrary smooth and compactly supported test function~$\varphi \in C_\rmc^\infty([0,\tau]\otimes \R^d)$ (which may not vanish at the boundaries~$t=0$ or~$t=\tau$), and use an integration by parts to write
\begin{equation}\label{eq:testphig}
  \begin{split}
    & \iint_{[0,\tau]\times\R^d} \varphi\partial_{x_i}g \dif U_\tau\dif \mu_W = -\iint_{[0,\tau]\times\R^d} g\partial_{x_i}\varphi  \dif U_\tau\dif \mu_W\\
    & \qquad + 2\iint_{[0,\tau]\times\R^d} \varphi g \zeta^{-1}\partial_{x_i}\zeta \dif U_\tau\dif\mu_W + \iint_{[0,\tau]\times\R^d} \varphi g \partial_{x_i}\phi \dif U_\tau\dif \mu_W.
    \end{split}
\end{equation}
The first term on the right hand side of~\eqref{eq:testphig} is bounded by $\|g\|_{\rmL^2(U_\tau\otimes\mu_W)}\|\partial_{x_i}\varphi\|_{\rmL^2(U_\tau\otimes\mu_W)}$. The second term can be paired with the second term of~$I_i$, the resulting quantity being bounded as (using the equation~\eqref{ellipt} satisfied by~$u$)
\begin{align*}
  \left| 2\iint_{[0,\tau]\times\R^d}  \varphi  \zeta^{-1}\partial_{x_i} \zeta (g-\zeta^2 \nabla_x^\star\nabla_x u) \dif U_\tau\dif\mu_W\right|
  = 2 \left| \iint_{[0,\tau]\times\R^d}  \varphi \zeta^{-1}\partial_{x_i} \zeta \partial_{tt}^2 u \dif U_\tau\dif\mu_W\right| & \\
  \qquad = 2 \left| \iint_{[0,\tau]\times\R^d}  \partial_t\varphi \zeta^{-1}\partial_{x_i} \zeta \partial_{t} u \dif U_\tau\dif\mu_W \right| \stackrel{\eqref{eq:condW}}{\le} 2 \theta_W\|\partial_t\varphi\|_{\rmL^2(U_\tau\otimes\mu_W)}\|\partial_t u\|_{\rmL^2(U_\tau\otimes\mu_W)}. &
\end{align*}
Finally, the last term of \eqref{eq:testphig} can be paired with the last term of $I_i$, multiplied by~$\varphi$ and integrated, and then bounded as 
\begin{align*}
  & \left|\iint_{[0,\tau]\times\R^d} \varphi \left(g \partial_{x_i}\phi - \zeta^2 \nabla_x \partial_{x_i}\phi\cdot\nabla_x u\right) \dif U_\tau\dif \mu_W \right| \\
  & = \left|\iint_{[0,\tau]\times\R^d} \varphi (-\partial_{tt}^2 u)(\partial_{x_i}\phi) \dif U_\tau\dif \mu_W +  \iint_{[0,\tau]\times\R^d} \varphi \left(\nabla_x^\star\nabla_x u (\partial_{x_i}\phi) -  \nabla_x \partial_{x_i}\phi\cdot\nabla_x u\right) \dif U_\tau\dif \mu \right|\\
  & = \left| \iint_{[0,\tau]\times\R^d} (\partial_t\varphi)(\partial_{t} u)(\partial_{x_i}\phi) \dif U_\tau\dif \mu_W +  \iint_{[0,\tau]\times\R^d} (\partial_{x_i}\phi) \nabla_x\varphi  \cdot \nabla_x u   \dif U_\tau\dif \mu \right|\\
  & \le \sup_{x\in \R^d}|\nabla_x \phi| \left(\|\partial_t\varphi\|_{\rmL^2(U_\tau\otimes\mu_W)}\|\partial_t u\|_{\rmL^2(U_\tau\otimes\mu_W)} + \|\nabla_x\varphi\|_{\rmL^2(U_\tau\otimes\mu)}\|\nabla_x u\|_{\rmL^2(U_\tau\otimes\mu)} \right).
\end{align*}
Combining the latter estimates, using \eqref{eq:elliptestL2} and the pointwise bound \eqref{eq:lowerbdphi} on $|\nabla_x \phi|$, we can conclude that
\[
\left| \iint_{[0,\tau]\times\R^d} I_i\varphi \dif U_\tau\dif \mu_W \right| \lesssim \|g\|_{\rmL^2(U_\tau\otimes\mu_W)}\|\varphi\|_{V},
\]
which ensures that~$I_i\in V^\star$ as desired. The above calculations also hold for $\varphi=1$, which leads to $\overline{I_i}=0$ since all expressions in the end involve derivatives of~$\varphi$. 

\smallskip

At this stage it has been established that~$\nabla_{xx}^2 u$ and~$\nabla_x^\star\nabla_x u$ both belong to~$\mathrm{L}^2(U_\tau \otimes \mu)$ (for the latter function, this follows from the fact that~$\zeta^2 \nabla_x^\star\nabla_x u \in \mathrm{L}^2(U_\tau \otimes \mu_W)$ and~$\zeta^2 \geq Z_W$). We next use Bochner's formula
\begin{equation}
  \label{eq:bochner}
  |\nabla_{xx}^2 u|^2=\nabla_x u\cdot\nabla_x (\nabla_x^\star\nabla_x u) -(\nabla_x u)^{\top}\nabla_x^2 \phi \nabla_xu -\nabla_x^\star\nabla_x \left(\dfrac{\abs{\nabla_x u}^2}{2}\right),
\end{equation}
and integrate it against $U_\tau\otimes\mu$ to obtain (since~$\zeta^2/Z_W \geq 1$)
\begin{align*}
  \|\nabla^2_{xx} u\|^2_{\mathrm{L}^2(U_\tau \otimes \mu)}
  & \stackrel{\eqref{eq:lowerbdphi}}{\le}  \|\nabla_x^\star\nabla_x u\|_{\rmL^2(U_\tau\otimes\mu)}^2 +  M\|\nabla_x u\|^2_{\rmL^2(U_\tau\otimes\mu)}\\
  & = \|\zeta \nabla_x^\star\nabla_x u\|_{\rmL^2(U_\tau\otimes\mu_W)}^2 +  M\|\nabla_x u\|^2_{\rmL^2(U_\tau\otimes\mu)}\\
  & \leq Z_W^{-1} \left\|\zeta^2 \nabla_x^\star\nabla_x u \right\|_{\rmL^2(U_\tau\otimes\mu_W)}^2 +  M\|\nabla_x u\|^2_{\rmL^2(U_\tau\otimes\mu)}\\
  &  \stackrel{\eqref{eq:elliptestL2},\eqref{eq:boundLu}}{\le} \left[ 4Z_W^{-1} +M\max \left(\frac{\tau^2}{\pi^2} ,  P_W \right)\right] \|g\|_{\rmL^2(U_\tau \otimes \mu_W)}^2.
\end{align*}
The inequality~\eqref{eq:elliptestH1} then follows from the latter inequality and~\eqref{eq:est2}.

\subsubsection*{Step 3: An orthogonal decomposition}

In order to follow the construction of~\cite{Brigati2023}, it is helpful to first prove that the operator~$\zeta^2 \nabla_x^\star \nabla_x$ is self-adjoint and positive over the Hilbert space
\[
\mathrm{L}^2_0(\mu_W) = \left\{ z \in \mathrm L^2( \mu_W) \ \middle| \ \int_{\R^d} z \dif \mu_W =0 \right\}.
\] 
It is clear that this operator is symmetric and positive on~$\rmL^2_0(\mu_W)$. For smooth functions~$f,g$ with compact support, consider the bilinear form
\[
Q(f,g) :=  \bangle{f,g}_{\rmL^2(\mu_W)} + \langle \nabla_x f, \nabla_x g \rangle_{\rmL^2(\mu)} =\bangle{f, \zeta^2 \nabla_x^\star \nabla_x g}_{\rmL^2(\mu_W)} + \bangle{f,g}_{\rmL^2(\mu_W)} .
\]
By density, we can extend $Q$ to the Hilbert space (note that this is not the usual Sobolev space)
\[
\mathrm{H}_1 := \left\{ f \in \rmL^2(\mu_W) \ \middle| \ \nabla_x f \in \rmL^2(\mu) \right\},
\]
and notice that $Q(f,f)<\infty$ if and only if $f\in \mathrm{H}_1$. This allows us to consider the Friedrichs extension (see for example \cite[Section 2.3]{teschl2014mathematical}) to define $\mathrm{Id}+\zeta^2 \nabla_x^\star \nabla_x$ as a self-adjoint operator on $\rmL^2(\mu_W)$ with maximal domain; and hence define $\zeta^2 \nabla_x^\star \nabla_x$ as well. This allows to construct
\[
L := \sqrt{\zeta^2 \nabla_x^\star \nabla_x}
\]
using functional calculus. Note moreover that $\zeta^2 \nabla_x^\star \nabla_x$ is coercive over~$\mathrm{L}^2_0(\mu_W)$ since, for any $f \in \mathrm{L}^2_0(\mu_W)$ in the domain of the operator, the weighted Poincar\'e inequality~\eqref{eq:wp} implies 
\begin{equation}
  \label{eq:coercivity_L}
  \|L f\|^2_{\rmL_0^2(\mu_W)} = \bangle{f,\zeta^2 \nabla_x^\star \nabla_x f}_{\rmL^2(\mu_W)} = \|\nabla_x f \|^2_{\rmL^2(\mu)} \geq \frac{1}{P_W}\|f \|^2_{\rmL^2(\mu_W)}.
\end{equation}
In particular, the operator~$L$ is invertible on~$\mathrm{L}^2_0(\mu_W)$ with
\begin{equation}
  \label{eq:inv_L}
  \|L^{-1}\|_{\rmL_0^2(\mu_W) \rightarrow \rmL_0^2(\mu_W)} \le \sqrt{P_W}.
\end{equation}

With the above preparations, we can consider the following subspace of~$\mathrm{L}^2_0(U_\tau \otimes \mu_W)$:
\begin{equation}\label{eq:spaceN}
  \mathcal{N} := \left\{ z \in \mathrm{L}^2_0(U_\tau \otimes \mu_W) \ \middle| \ \exists z_{\pm} \in \mathrm{L}^2_0(\mu_W) \, : \, z = \mathrm{e}^{-tL} z_+ + \mathrm{e}^{-(\tau-t)L} z_-  \right\}.
\end{equation}
Referring to the statement of Proposition~\ref{prop:divergence_equation}, we claim that, if $g \in \mathcal{N}^\perp$, namely
\[
\forall z\in \mathcal{N}, \qquad \iint_{[0,\tau] \times \R^d} gz \dif U_\tau \dif \mu_W =0,
\]
then~$(F_0,F_1):=(\partial_t u, u)$ with~$u$ the solution to~\eqref{ellipt} with right hand side~$g$, provides a solution to~\eqref{div}. This choice ensures that the estimates~\eqref{eq:elliptestL2} and~\eqref{eq:elliptestH1} hold true, and so that~\eqref{eq:divestL2} and~\eqref{eq:divestH1} are respectively satisfied.  Since~$F_0(0,\cdot) = F_0(\tau,\cdot) = 0$ by the boundary conditions in~\eqref{ellipt}, it therefore remains to prove that~$ \nabla_x F_1(0,\cdot) = \nabla_x F_1(\tau,\cdot) = 0$, \emph{i.e.} $\nabla_x u(0,\cdot) = \nabla_x u(\tau,\cdot) = 0$. 

\smallskip

To prove the latter property, note first that, for any~$z \in \mathcal{N}$, it holds $(-\partial^2_{tt} + \zeta^2 \nabla_x^\star \nabla_x) z = - L^2 z + L^2 z =0$. If~$g \in \mathcal{N}^\perp$ and~$u$ solves~\eqref{ellipt}, then, for all $z \in \mathcal{N}$,
\begin{align*}
  0
  & = \iint_{[0,\tau] \times \R^d} g \, z \dif U_\tau \dif\mu_W   \\
  & =\iint_{[0,\tau] \times \R^d} (-\partial^2_{tt} + \zeta^2 \nabla_x^\star \nabla_x) u \, z \dif U_\tau \dif\mu_W \\
  & = \iint_{[0,\tau] \times \R^d} u \, (-\partial_{tt}^2 + \zeta^2 \nabla_x^\star \nabla_x) z \dif U_\tau \dif\mu_W + \int_{\R^d} \left[u(\tau,x) \, \partial_t z(\tau,x) - u(0,x) \, \partial_t z(0,x)\right] \mu_W(\dif x)  \\
  & = \int_{\R^d} \left[u(\tau,x) \, \partial_t z(\tau,x) - u(0,x) \, \partial_t z(0,x)\right] \mu_W(\dif x)  \\
  & = \int_{\R^d} u(0,x) \, ( L z_+- L \mathrm{e}^{-\tau L} z_- )(x) - u(\tau,x) \, (L \mathrm{e}^{-\tau L} z_+ - L z_-)(x) \, \mu_W(\dif x),
\end{align*}
where we used in the third equality the boundary conditions of \eqref{ellipt}. By choosing~$z_+ = \mathrm{e}^{-\tau L} z_-$, the last equality implies that
\[
\forall z_- \in \mathrm{L}^2_0(\mu_W), \qquad \int_{\R^d} u(\tau,\cdot) \, L (\mathrm{e}^{-2\tau L} -1) z_- \dif \mu_W =0.
\]
As $L^2$ is coercive over~$\mathrm{L}^2_0(\mu_W)$, the operator $L (\mathrm{e}^{-2\tau L} -1)$ is invertible over the same space. Since~$z_-$ is arbitrary, we find that~$L (\mathrm{e}^{-2\tau L} -1)u(\tau,\cdot) =0$ in~$\mathrm{L}^2_0(\mu_W)$. Therefore, $u(\tau,\cdot)$ is a constant function. The same can be said for $u(0,\cdot)$, so that the boundary conditions in \eqref{div} are satisfied.

\smallskip

In order to complete the proof of Proposition~\ref{prop:divergence_equation}, we analyse what happens for~$g \in \mathcal{N}$, and construct an admissible solution to~\eqref{div} in this case, following the strategy of~\cite{Brigati2023,Cao2023}.
 
\subsubsection*{Step 4: The exceptional space}

Assume that $g$ is of the form $g = \mathrm{e}^{-t L} g_+,$ for some function $g_+ \in \mathrm{L}_0^2(\mu_W)$. Our goal is to solve \eqref{div} with $\rme^{-tL} g_+$ playing the role of $g$ and appropriate estimates on $F_0, F_1$. We repeat here the equation to solve for convenience:
\begin{equation}\label{eq:diveqL2}
    -\partial_t F_0 + L^2 F_1 = \mathrm{e}^{-t L} g_+.
\end{equation}
A solution to \eqref{eq:diveqL2} has already been computed in~\cite{Brigati2023}:
\begin{align*}
  F_0(t,\cdot) & = -2L^{-1}(1-\mathrm e^{-\tau L})^{-2} (1-\mathrm e^{-tL})(1-\mathrm e^{-(\tau-t)L})\left(\mathrm e^{-tL}-\frac{1}{2}(1+\mathrm e^{-\tau L})\right)\mathrm e^{-tL} g_+, \\
  F_1(t,\cdot) & = 6L^{-2}(1-\mathrm e^{-\tau L})^{-2} (1-\mathrm e^{-tL})(\mathrm e^{-tL}-\mathrm e^{-\tau L}) \mathrm e^{-tL} g_+.
\end{align*}
These functions indeed make sense since, for any~$t\in [0,\tau]$, the operators $(1-\rme^{-\tau L})^{-1}(1-\rme^{-tL})$, $(1-\rme^{-\tau L})^{-1}(1-\rme^{-(\tau-t)L})$, $(1-\mathrm e^{-\tau L})^{-1}(\mathrm e^{-tL}-\mathrm e^{-\tau L})$ and~$\mathrm{e}^{-tL}-\frac{1}{2}(1+\rme^{-\tau L})$ are all contractions on~$\rmL^2_0(\mu_W)$. Moreover, the operators~$L^{-1}$ and~$L^{-2}$ are bounded on~$\rmL^2_0(\mu_W)$, so that~$F_0(t,\cdot)$ and~$F_1(t,\cdot)$ are well defined and both belong to~$\rmL^2_0(\mu_W)$. Overall, $F_0$ and~$F_1$ belong to~$\mathrm{L}^2_{0,0}( U_\tau \otimes \mu_W) := \mathrm{L}^2( U_\tau,\rmL^2_0(\mu_W)) \subseteq \rmL^2_0(U_\tau \otimes \mu_W)$. The bound~\eqref{eq:inv_L} allows us to control~$F_0$ as
\begin{equation}
  \label{eq:L2F0}
  \|F_0\|_{\rmL^2(U_\tau  \otimes \mu_W)} \leq 2\sqrt{P_W} \|\rme^{-tL}g_+\|_{\mathrm{L}^2(U_\tau \otimes \mu_W)},
\end{equation}
and, in view of the equality~$\partial_t F_0 = L^2 F_1 - \mathrm{e}^{-t L} g_+$, 
\begin{equation}
  \label{eq:dtF0}
  \|\partial_t F_0\|_{\mathrm{L}^2( U_\tau \otimes \mu_W)} \leq \|\rme^{-tL} g_+\|_{\rmL^2(U_\tau\otimes\mu_W)} + \|L^2 F_1\|_{\rmL^2(U_\tau\otimes\mu_W)} \le 7\|\rme^{-tL} g_+\|_{\rmL^2(U_\tau\otimes\mu_W)}.
  \end{equation}

  \smallskip
  
To control~$\|\nabla_x  F_0\|_{\rmL^2(U_\tau\otimes\mu)}$, it suffices to bound the norm of the operator~$\nabla_x L^{-1}$ considered on~$\mathrm{L}^2_{0,0}( U_\tau \otimes \mu_W)$ since the remaining part of~$F_0$ is a contraction in~$\rmL^2_0(\mu_W)$ for any~$t \in [0,\tau]$. Now, for any~$f \in \mathrm{L}^2_{0,0}(U_{\tau} \otimes \mu_W)$ (note that~$L^{-1} f(t,\cdot)$ is a well-defined element of~$\rmL^2_0(\mu_W)$ for almost every~$t \in [0,\tau]$),   
\begin{align*}
  \|\nabla_x L^{-1}f\|^2_{\mathrm{L}^2(U_\tau \otimes \mu)}
  & = \iint_{[0,\tau] \times \R^d} \nabla_x L^{-1} f \cdot \nabla_x L^{-1} f \dif U_\tau \dif \mu  \\
  & = \iint_{[0,\tau] \times \R^d} \left(\nabla_x^\star \nabla_x L^{-1} f \right) L^{-1} f \dif U_\tau \dif \mu \\
  & = \iint_{[0,\tau] \times \R^d} \left( \zeta^2 \nabla_x^\star \nabla_x L^{-1} f \right) L^{-1} f  \dif U_\tau \dif \mu_W \\
  & = \iint_{[0,\tau] \times \R^d} \left( L^2 L^{-1} f\right) L^{-1} f \dif U_\tau \dif \mu_W  = \|f\|^2_{\mathrm{L}^2( U_\tau \otimes \mu_W)}.
  \stepcounter{equation} \tag{\theequation} \label{eq:dxLinvbd}
\end{align*}
Therefore, with $\rme^{-tL} g_+$ playing the role of $f$ in \eqref{eq:dxLinvbd}, we arrive at 
    \begin{equation}\label{eq:dxF0}
        \|\nabla_x F_0\|_{\mathrm{L}^2(U_\tau \otimes \mu)} \le 2\|\rme^{-tL}g_+\|_{\rmL^2(U_\tau\otimes\mu_W)}.
    \end{equation}

Let us next turn to~$\nabla_x F_1$ and its derivatives. To alleviate the notation, we introduce for~$t \in [0,\tau]$ the operator
\[
B(t) := 6(1-\mathrm e^{-\tau L})^{-2} (1-\mathrm e^{-tL})(\mathrm e^{-tL}-\mathrm e^{-\tau L}),
\]
which is bounded on~$\rmL^2_0(\mu_W)$ with~$\|B(t)\|_{\rmL^2_0(\mu_W) \rightarrow \rmL^2_0(\mu_W)} \le 6$. The bound on~$\nabla_x F_1$ is obtained similarly to the bound on~$\nabla_x F_0$ as
\begin{align*}
  \|\nabla_x F_1\|^2_{\rmL^2(U_\tau\otimes\mu)}
  & = \int_0^\tau \| \nabla_x L^{-2} B(t) \, \rme^{-tL}g_+\|^2_{\rmL^2(\mu)} U_\tau(\dif t) \\
  & \stackrel{\eqref{eq:dxLinvbd}}{= } \int_0^\tau \| L^{-1} B(t) \, \rme^{-tL}g_+\|^2_{\rmL^2(\mu_W)} U_\tau(\dif t) \\
  & \stackrel{\eqref{eq:inv_L}}{\le} P_W \int_0^\tau \| B(t) \, \rme^{-tL}g_+\|^2_{\rmL^2(\mu_W)} U_\tau(\dif t) \\
  & \le 36 P_W \| \rme^{-tL}g_+\|^2_{\rmL^2(U_\tau\otimes\mu_W)}. \stepcounter{equation} \tag{\theequation} \label{eq:L2F1}
\end{align*}
As for~$\nabla_{xx}^2 F_1(t) = \nabla_{xx}^2 L^{-2} B(t)\rme^{-tL} g_+$, note that for $g_+ \in \rmL^2_0(\mu_W)$, we have $B(t) \rme^{-t L} g_+ \in \rmL^2_{0,0}(U_\tau \otimes \mu_W)$ and consequently by elliptic regularity theory (identical to Step 2 but only in  the~$x$ variable) $L^{-2}B(t) \rme^{-t L} g_+ \in \mathrm{H}^2(U_\tau \otimes \mu)$. We can therefore again use Bochner's formula~\eqref{eq:bochner} (with $L^{-2} B(t)\rme^{-tL} g_+$ playing the role of~$u$), integrated against $\dif U_\tau \dif\mu$: 
\begin{align*}
  \|\nabla_{xx}^2 F_1\|_{\rmL^2(U_\tau\otimes\mu)}^2
  & \stackrel{\eqref{eq:lowerbdphi}}{\le} \|\nabla_{x}^\star \nabla_x L^{-2} B(t)\rme^{-tL} g_+\|_{\rmL^2(U_\tau\otimes\mu)}^2 +M \| \nabla_x L^{-2}B(t)\rme^{-tL} g_+\|^2_{\rmL^2(U_\tau\otimes\mu)} \\
  & \stackrel{\eqref{eq:dxLinvbd}}{=}  \|\zeta^{-1} B(t)\rme^{-tL} g_+\|_{\rmL^2(U_\tau\otimes\mu_W)}^2 + M\| L^{-1}B(t)\rme^{-tL}g_+\|^2_{\rmL^2(U_\tau\otimes\mu_W)}  \\
  & \stackrel{\eqref{eq:inv_L}}{\le} (Z_W^{-1}  + MP_W) \| B(t)\rme^{-tL} g_+\|^2_{\rmL^2(U_\tau\otimes\mu_W)} \\
  & \le 36(Z_W^{-1}  + MP_W) \|\rme^{-tL} g_+\|^2_{\rmL^2(U_\tau\otimes\mu_W)}. \stepcounter{equation} \tag{\theequation} \label{eq:dxF1}
\end{align*}

Finally, for the time derivative of~$\nabla_x F_1$, we write
\begin{align*}
  \nabla_x \partial_t F_1 & = -6\nabla_x L^{-1}(1-\rme^{-\tau L})^{-2} (1-\rme^{-tL})(\rme^{-tL}-\rme^{-\tau L}) \rme^{-tL} g_+ \\
  & \qquad + 6\nabla_x L^{-1}(1-\rme^{-\tau L})^{-2} \rme^{-tL}(\rme^{-tL}-\rme^{-\tau L}) \rme^{-tL} g_+ \\
  & \qquad - 6\nabla_x L^{-1}(1-\rme^{-\tau L})^{-2} (1-\rme^{-tL})\rme^{-tL} \rme^{-tL} g_+.
\end{align*}
The coercivity inequality~\eqref{eq:coercivity_L} implies by functional calculus that
\[
\|(1-\rme^{-\tau L})^{-1}\|_{\rmL^2_0(U_\tau \otimes\mu_W) \rightarrow \rmL^2_0(U_\tau \otimes\mu_W)} \le (1-\rme^{-\tau/\sqrt{P_W}})^{-1},
\]
which, combined with \eqref{eq:dxLinvbd} and operator inequalities such as~$0 \leq (1-\rme^{-\tau L})^{-1} (1-\rme^{-tL}) \leq 1$, allows us to conclude that
\begin{equation}
  \label{eq:dtF1}
  \|\nabla_x \partial_t F_1\|_{\mathrm{L}^2(U_\tau \otimes \mu)} \leq 6\left(1+\frac{2}{1-\rme^{-\tau/\sqrt{P_W}}}\right)\|\rme^{-tL} g_+\|_{\rmL^2(U_\tau\otimes\mu_W)}.
\end{equation}

\subsubsection*{Step 5: Conclusion}

We combine the estimates of Steps~1 to~4 to finish the proof. For any~$g\in \rmL^2(U_\tau\otimes \mu_W)$ with~$\overline{g}=0$, we perform an orthogonal decomposition with respect to the space~$\mathcal{N}$ defined in \eqref{eq:spaceN}:
\[
g = g^\perp + \rme^{-tL} g_+ + \rme^{-(\tau -t)L}g_-, \qquad g^\perp \in \mathcal{N}^\perp, \qquad g_\pm \in \rmL^2_0(\mu_W).
\]
Recall that~$\overline{z}=0$ whenever $z\in \mathcal{N}$ since the latter functional space is a subspace of~$L^2_0(U_\tau \otimes \mu_W)$. Therefore, $\overline{g^\perp}=0$ as well since $\overline{g}=0$. We can therefore construct a solution of~\eqref{div} by setting
\[
F_0 = \partial_t u + F_0^+ + F_0^-, \qquad F_1 = u + F_1^+ + F_1^-,
\]
where $u$ is constructed via Step~2 and solves~\eqref{ellipt} with $g^\perp$ on the right hand side instead of~$g$, and $F_0^\pm, \ F_1^\pm$ are constructed via Step~4 and solve \eqref{eq:diveqL2} with right hand-sides~$\rme^{-tL}g_+$ and~$\rme^{-(\tau-t)L}g_-$ respectively. Note that $F_0^-$ and~$F_1^-$ satisfy estimates similar to those satisfied by~$F_0^+$ and~$F_1^+$.

\smallskip

We claim that
\begin{equation} \label{eq:combinerhs} 
  \begin{split}
    \| \rme^{-t L} g_+\|_{\mathrm{L}^2(U_\tau\otimes \mu_W)}^2 & +\|  \rme^{-(\tau-t) L} g_-\|_{\mathrm{L}^2(U_\tau\otimes \mu_W)}^2 \\
    & \le \frac{1}{1-R_{W,\tau}}
    \left\|\rme^{-tL}g_+ + \rme^{-(\tau-t) L} g_- \right\|_{\rmL^2(U_\tau\otimes\mu_W)},
  \end{split}
\end{equation}
with $R_{W,\tau}$ defined in \eqref{eq:Rwtau}. To prove this inequality, we first note that, by functional calculus,
\begin{align*}
  \| \mathrm{e}^{-t L} g_+\|_{\mathrm{L}^2(U_\tau\otimes\mu_W)}^2 = \iint_{[0,\tau]\times \R^d} g_+ \rme^{-2tL}g_+  \dif U_\tau \dif \mu_W = \int_{\R^d} g_+\frac{\mathrm{Id}-\rme^{-2\tau L}}{2\tau}L^{-1} g_+ \dif \mu_W.
\end{align*}
We next expand
\begin{align*}
  & \left\| \mathrm{e}^{-t L} g_+  + \mathrm{e}^{-(\tau-t) L} g_- \right\|_{\mathrm{L}^2(U_\tau\otimes \mu_W)}^2 \\
  & = \int_{\R^d} g_+\frac{\mathrm{Id}-\rme^{-2\tau L}}{2\tau}L^{-1} g_+ \dif \mu_W  + \int_{\R^d} g_-\frac{\mathrm{Id}-\rme^{-2\tau L}}{2\tau}L^{-1} g_- \dif \mu_W + 2 \int_{\R^d} g_+ \rme^{-\tau L} g_-  \dif \mu_W \\
  & \geq \int_{\R^d} g_+\frac{\mathrm{Id}-\rme^{-2\tau L}}{2\tau}L^{-1} g_+\dif \mu_W  + \int_{\R^d} g_-\frac{\mathrm{Id}-\rme^{-2\tau L}}{2\tau}L^{-1} g_- \dif \mu_W  \\
  & \qquad  - \int_{\R^d} g_+\rme^{-\tau L}g_+\dif \mu_W  - \int_{\R^d} g_-\rme^{-\tau L} g_- \dif \mu_W,  \stepcounter{equation} \tag{\theequation} \label{eq:combexp1}
\end{align*} 
where we used a Cauchy--Schwarz inequality in the last step. Now, in view of the coercivity inequality~\eqref{eq:coercivity_L} and the fact that the function~$s \mapsto 2s \rme^{-s}/(1-\rme^{-2s})$ is nonincreasing when $s\ge 0$, it holds
\[
2 \tau L \rme^{-\tau L} (1-\rme^{-2\tau L})^{-1} \leq R_{W,\tau}, 
\]
so that
\[
\frac{\mathrm{Id}-\rme^{-2\tau L}}{2\tau}L^{-1} - \rme^{-\tau L} \geq (1-R_{W,\tau})\frac{\mathrm{Id}-\rme^{-2\tau L}}{2\tau}L^{-1}.
\]
Combining this inequality and~\eqref{eq:combexp1} then gives
\[
\begin{aligned}
  & \left\| \mathrm{e}^{-t L} g_+ + \mathrm{e}^{-(\tau-t) L} g_- \right\|_{\mathrm{L}^2(U_\tau\otimes \mu_W)}^2 \\
  & \qquad \geq (1-R_{W,\tau})\left(\| \rme^{-t L} g_+\|_{\mathrm{L}^2(U_\tau\otimes \mu_W)}^2+\|  \rme^{-(\tau-t) L} g_-\|_{\mathrm{L}^2(U_\tau\otimes \mu_W)}^2 \right),
\end{aligned}
\]
which leads to~\eqref{eq:combinerhs}. The estimate \eqref{eq:divestL2} is then a combination of \eqref{eq:elliptestL2}, \eqref{eq:L2F0}, \eqref{eq:L2F1} and~\eqref{eq:combinerhs}, as $g^\perp$ is orthogonal to $\rme^{-tL} g_\pm$ in $\rmL^2(U_\tau\otimes \mu_W)$, and
\begin{align*}
  \|F_0\|_{\rmL^2(U_\tau\otimes\mu_W)}^2  & + \|\nabla_x F_1\|_{\rmL^2(U_\tau\otimes\mu_W)}^2 \\
  & \le 3\Big[\|\partial_t u\|_{\rmL^2(U_\tau\otimes\mu_W)}^2 +\|F_0^+\|_{\rmL^2(U_\tau\otimes\mu_W)}^2  + \|F_0^-\|_{\rmL^2(U_\tau\otimes\mu_W)}^2 \\
  & \qquad + \|\nabla_x u\|_{\rmL^2(U_\tau\otimes\mu_W)}^2 +\|\nabla_x F_1^+\|_{\rmL^2(U_\tau\otimes\mu_W)}^2  + \|\nabla_x F_1^-\|_{\rmL^2(U_\tau\otimes\mu_W)}^2\Big]
  \\ & \le 3\max \left( \frac{\tau^2}{\pi^2}, P_W\right) \|g^\perp\|_{\rmL^2(U_\tau\otimes\mu_W)}^2 \\ & \qquad + 120P_W \left( \|\rme^{-tL} g_+\|_{\rmL^2(U_\tau\otimes\mu_W)}^2 +\|\rme^{-(\tau-t)L} g_- \|_{\rmL^2(U_\tau\otimes\mu_W)}^2 \right) \\
  & \le C_0^2 \|g\|_{\rmL^2(U_\tau \otimes \mu_W)}^2, 
\end{align*}
with
\begin{equation}
  \label{C_0}
  C_0 := \sqrt{3} \max \left\{ \frac{\tau}{\pi}, \sqrt{\frac{ 40 P_W}{1-R_{W,\tau}}} \right\}. 
\end{equation}

\smallskip

Likewise, the estimate~\eqref{eq:divestH1} is a combination of \eqref{eq:elliptestH1}-\eqref{eq:est2}, and~\eqref{eq:dtF0}-\eqref{eq:dxF0}-\eqref{eq:dxF1}-\eqref{eq:dtF1}. More precisely, 
\[
\begin{aligned}
  \|\partial_t F_0& \|_{\rmL^2(U_\tau \otimes \mu_W)}^2 + \|\nabla_x F_0\|_{\rmL^2(U_\tau \otimes \mu)}^2 + \|\partial_t \nabla_x F_1\|_{\rmL^2(U_\tau \otimes \mu)}^2 +  \|\nabla_{xx}^2 F_1\|_{\rmL^2(U_\tau \otimes \mu)}^2 \\
  & \leq 3\Big(\|\partial_{tt}^2 u \|_{\rmL^2(U_\tau \otimes \mu_W)}^2 + 2 \|\nabla_x \partial_t u\|_{\rmL^2(U_\tau \otimes \mu)}^2 + \|\nabla_{xx}^2 u\|_{\rmL^2(U_\tau \otimes \mu)}^2 \\
  & \qquad + \|\partial_t F_0^+\|_{\rmL^2(U_\tau \otimes \mu_W)}^2 + \|\nabla_x F_0^+\|_{\rmL^2(U_\tau \otimes \mu)}^2 + \|\nabla_x \partial_t F_1^+\|_{\rmL^2(U_\tau \otimes \mu)}^2 +  \|\nabla_{xx}^2 F_1^+\|_{\rmL^2(U_\tau \otimes \mu)}^2 \\
  & \qquad + \|\partial_t F_0^-\|_{\rmL^2(U_\tau \otimes \mu_W)}^2 + \|\nabla_x F_0^-\|_{\rmL^2(U_\tau \otimes \mu)}^2 + \|\nabla_x \partial_t F_1^-\|_{\rmL^2(U_\tau \otimes \mu)}^2 +  \|\nabla_{xx}^2 F_1^-\|_{\rmL^2(U_\tau \otimes \mu)}^2\Big) \\
  & \leq \left[6+12 Z_W^{-1} +3M\max \left(\frac{\tau^2}{\pi^2} ,  P_W \right) \right] \|g^\perp\|_{\rmL^2(U_\tau\otimes\mu_W)}^2 \\
  & \qquad + \left( 159 + 108 \left[Z_W^{-1}+MP_W + \left(1+\frac{2}{1-\rme^{-\tau/\sqrt{P_W}}}\right)^2\right]\right) \\ & \qquad \qquad \times \left( \|\rme^{-tL} g_+\|_{\rmL^2(U_\tau\otimes\mu_W)}^2 +\|\rme^{-(\tau-t)L} g_- \|_{\rmL^2(U_\tau\otimes\mu_W)}^2 \right) \\
  & \le C_1^2 \|g\|_{\rmL^2(U_\tau \otimes \mu_W)}^2,
\end{aligned} 
\] 
with
\begin{equation}
  \label{C_1}
  \begin{aligned}
  C_1 := \sqrt{3} & \max\left\{\sqrt{2+4 Z_W^{-1} + M\max \left(\frac{\tau^2}{\pi^2} ,  P_W \right)}, \right. \\
  & \qquad \qquad \left. \sqrt{\frac{1}{1-R_{W,\tau}}}\sqrt{ 53 + 36 \left[Z_W^{-1}+MP_W + \left(1+\frac{2}{1-\rme^{-\tau/\sqrt{P_W}}}\right)^2\right]}\right\}.
  \end{aligned}
\end{equation}
The result then holds via elementary estimates on $C_0,C_1$.

\subsection{Space-time-velocity averaging lemma}
\label{sec:Space-time-velocity_averaging}

In order to use Theorem~\ref{thm:wtpi} to prove the convergence result of Theorem~\ref{thm:weightedpiconv}, an intermediate step is a space-time-velocity averaging lemma, given below. The main idea, which is classical in kinetic theory and can be traced back to at least \cite{golse1988regularity}, is the following: averaging in velocity a solution to~\eqref{VOU} has a regularizing effect. The extra information we recover on the velocity average~$\Pi_v h$ then allows us to control the~$\mathrm{L}^2$ energy of the solution~$h$ by means of the terms appearing in the kinetic equation~\eqref{VOU}. For a solution of~\eqref{VOU}, this inequality directly gives a control on its $\rmL^2$ norm by its energy dissipation. Similar inequalities appeared already in~\cite{Albritton2019,Cao2023, Brigati2023, Dietert2023, eberle2024non}, and our Lemma~\ref{lm:avg} below is a slight adaptation of these estimates. 

\medskip

\begin{lemma}
  \label{lm:avg}
  Consider~$(t,x,v) \mapsto h(t,x,v) \in \rmL^2(U_\tau \otimes \Theta)$. Then, 
  \begin{equation}\label{eq:avg}
    \|\Pi_v h- \overline{\Pi_v h}\|_{\rmL^2(U_\tau\otimes \mu_W)}  \le  C_{0,\tau}\|h-\Pi_v h\|_{\rmL^2(U_\tau\otimes \Theta)} +C_{1,\tau}\|(\partial_t + \calT)h\|_{\rmL^2(U_\tau \otimes \mu; \mathrm{H}^{-1}(\nu))},
  \end{equation}
  where~$\Pi_v$ denotes the projection operator in the~$v$ variable:
  \[
  \left(\Pi_v f\right)(t,x) := \int_{\R^d} f(t,x,v) \, \nu(\dif v),
  \]
  and the constants $C_{0,\tau}, C_{1,\tau}$ have explicit expressions given in~\eqref{eq:c0tau} and~\eqref{eq:c1tau}.
\end{lemma}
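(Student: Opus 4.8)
The plan is to combine the weighted Poincaré–Lions inequality of Theorem~\ref{thm:wtpi} applied to $g = \Pi_v h$ with a duality/averaging argument that re-expresses the $\mathrm{H}^{-1}$ norms of $\partial_t \Pi_v h$ and $\nabla_x \Pi_v h$ in terms of the quantities appearing on the right-hand side of~\eqref{eq:avg}. The starting point is the identity $\partial_t(\Pi_v h) + \nabla_x \cdot \Pi_v(v h) = \Pi_v[(\partial_t + \calT) h]$ obtained by projecting~\eqref{VOU} in velocity (or more precisely by testing the equation against $v$-independent functions), using that $\Pi_v \nabla_v^\star \nabla_v h = 0$ and $\Pi_v(\calT h)$ can be rewritten via the transport structure. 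Here $\nabla_v\psi$ plays the role of the velocity so that the relevant flux is $\Pi_v(\nabla_v\psi \, h)$, which is where Assumption~\ref{ass2} enters to guarantee the moment bounds $\int |\nabla_v\psi|^2\,\dif\nu<\infty$ etc.

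\textbf{Step 1.} Apply Theorem~\ref{thm:wtpi} to $g(t,x) = (\Pi_v h)(t,x)$, which belongs to $\rmL^2(U_\tau\otimes\mu_W)$ since $h\in\rmL^2(U_\tau\otimes\Theta)$ and $\mu_W\leq Z_W^{-1}\mu$ pointwise in density only on $W\geq 1$; this yields
\[
\|\Pi_v h - \overline{\Pi_v h}\|_{\rmL^2(U_\tau\otimes\mu_W)} \leq \mathrm{C}_{\mathrm{Lions}}\left(\|\zeta^{-1}\partial_t \Pi_v h\|_{\mathrm{H}^{-1}(U_\tau\otimes\mu)} + \|\nabla_x \Pi_v h\|_{\mathrm{H}^{-1}(U_\tau\otimes\mu)}\right).
\]
\textbf{Step 2.} Bound each $\mathrm{H}^{-1}$ term by duality: for a test function $\varphi$ with $\|\varphi\|_{\mathrm{H}^1(U_\tau\otimes\mu)}\leq 1$, pair $\nabla_x\Pi_v h$ against $\nabla_x\varphi$ and integrate by parts in $x$; similarly pair $\zeta^{-1}\partial_t\Pi_v h$ against $\zeta^{-1}\partial_t$-dual objects. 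Using the projected equation to replace $\partial_t\Pi_v h$, one gets terms of the form $\int \Pi_v(\nabla_v\psi\, h)\cdot\nabla_x\varphi$ and $\int \Pi_v[(\partial_t+\calT)h]\,\varphi$. The first is controlled by writing $\Pi_v(\nabla_v\psi\,h) = \Pi_v(\nabla_v\psi\,(h-\Pi_v h))$ (since $\Pi_v(\nabla_v\psi)=0$ by oddness/integration by parts against $\nu$), then Cauchy–Schwarz in $v$ with weight $\int|\nabla_v\psi|^2\dif\nu<\infty$, giving $\lesssim \|h-\Pi_v h\|_{\rmL^2(U_\tau\otimes\Theta)}\|\nabla_x\varphi\|_{\rmL^2(U_\tau\otimes\mu)}$. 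The second term is, essentially by definition of the $\mathrm{H}^{-1}(\nu)$ norm, bounded by $\|(\partial_t+\calT)h\|_{\rmL^2(U_\tau\otimes\mu;\mathrm{H}^{-1}(\nu))}\|\varphi\|_{\rmL^2}$ — one must check that $\Pi_v$ acting on an $\mathrm{H}^{-1}(\nu)$-valued function and then pairing with a $v$-independent $\varphi$ is the same as the $\mathrm{H}^{-1}(\nu)$ pairing with the constant function $\varphi$, which has $\mathrm{H}^1(\nu)$ norm equal to its $\rmL^2$ norm. The $\zeta^{-1}$ weight is harmless since $\zeta^{-2}=Z_W^{-1}W^{-2}\leq Z_W^{-1}$ and also $\|\zeta^{-1}\partial_t\varphi\|_{\rmL^2(\mu)}\leq\|\partial_t\varphi\|_{\rmL^2(\mu)}$; care with $\nabla_x(\zeta^{-1})$ is dealt with exactly as in the proof of Theorem~\ref{thm:wtpi} using~\eqref{eq:condW}.

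\textbf{Step 3.} Collect the constants. Tracking $\mathrm{C}_{\mathrm{Lions}}$ together with the moment constant $\mathscr{M}^{1/2}$ (or $(\int|\nabla_v\psi|^2\dif\nu)^{1/2}$), the factor $Z_W^{-1/2}$, and $\theta_W$, one reads off explicit expressions for $C_{0,\tau}$ (multiplying $\|h-\Pi_v h\|$, coming from the flux term) and $C_{1,\tau}$ (multiplying the $(\partial_t+\calT)h$ term), matching~\eqref{eq:c0tau}–\eqref{eq:c1tau}.

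\textbf{The main obstacle} I anticipate is Step~2, specifically the rigorous handling of the time-boundary terms and the $\mathrm{H}^{-1}$-in-time duality: unlike the purely spatial averaging lemmas, here $\partial_t\Pi_v h$ is only a distribution, the test functions $\varphi$ need not vanish at $t=0,\tau$, and one must be careful that the projected equation holds in the correct weak sense on the whole strip (this uses the $\mathrm{H}^1_{\mathrm{kin}}$ well-posedness framework recalled in the introduction). A secondary subtlety is justifying $\Pi_v(\nabla_v\psi\,h)=\Pi_v(\nabla_v\psi\,(h-\Pi_v h))$ and the Cauchy–Schwarz step when $\psi$ is only assumed to satisfy the fourth-moment bound~\eqref{eq:integrability_conditions_on_psi} rather than a pointwise growth bound — but the exponent $4$ in Assumption~\ref{ass2} is presumably chosen precisely so that products like $|\nabla_v\psi|^2\cdot|\nabla_{vv}^2\psi|$ (which would arise if one differentiates the flux once more, e.g. to handle the $\calT$ contribution to $\nabla_x\Pi_v h$) remain integrable against $\nu$.
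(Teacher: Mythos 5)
Your Step~1 and your treatment of the time derivative are essentially the paper's argument. The paper also starts from \eqref{eq:wtpl} with $g=\Pi_v h$, and its estimate of $\|\zeta^{-1}\partial_t\Pi_v h\|_{\mathrm{H}^{-1}(U_\tau\otimes\mu)}$ is your computation written dually: instead of using the zeroth-moment identity $\partial_t \Pi_v h+\nabla_x^\star\Pi_v(\nabla_v\psi\,h)=\Pi_v[(\partial_t+\calT)h]$ together with $\Pi_v(\nabla_v\psi\,h)=\Pi_v(\nabla_v\psi\,(h-\Pi_vh))$, it writes the pairing of $\partial_t\Pi_v h$ with a $v$-independent test function as a pairing of $(\partial_t+\calT)\Pi_v h$ in $\Theta$, splits off $(\partial_t+\calT)(h-\Pi_v h)$, and moves the skew-symmetric operator onto $\zeta^{-1}z$, which produces exactly your flux term $\nabla_v\psi\cdot\nabla_x(\zeta^{-1}z)$, bounded through the covariance matrix $\mathscr{M}$. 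One conceptual correction: the lemma is stated for an arbitrary $h\in\rmL^2(U_\tau\otimes\Theta)$, and the identity above is purely algebraic --- it does not require $h$ to solve \eqref{VOU} --- so no appeal to the equation or to $\mathrm{H}^1_{\mathrm{kin}}$ well-posedness is needed; your worry about time-boundary terms is resolved by taking test functions in $\mathrm{H}^1_\DC(U_\tau\otimes\mu)$, i.e.\ vanishing at $t=0,\tau$, which is legitimate for the $\mathrm{H}^{-1}$ duality on the strip.

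The genuine gap is the spatial term $\|\nabla_x\Pi_v h\|_{\mathrm{H}^{-1}(U_\tau\otimes\mu)}$. The recipe you give --- ``pair $\nabla_x\Pi_v h$ against $\nabla_x\varphi$ and integrate by parts in $x$'' --- cannot close: integration by parts returns $\Pi_v h$ itself tested against second derivatives of $\varphi$, and $\|\Pi_v h\|_{\rmL^2}$ is precisely the quantity the lemma is meant to control; moreover the zeroth-moment (continuity) identity you rely on gives information only on $\partial_t\Pi_v h$, not on $\nabla_x\Pi_v h$. What is missing is the first-moment/velocity-averaging step, which the paper imports from \cite[Lemma~2]{Brigati2023}: one tests $(\partial_t+\calT)h$ against the genuinely $v$-dependent function $G^{\mathcal M}(v)\cdot Z(t,x)$, with $G^{\mathcal M}=\mathcal{M}\,\nabla_v\psi/\|\nabla_v\psi\|_{\rmL^2(\nu)}$ chosen so that $\int_{\R^d}\nabla_v\psi\otimes G^{\mathcal M}\dif\nu$ is a multiple of the identity; the transport contribution $-\nabla_v\psi\cdot\nabla_x h$ then reproduces $\nabla_x\Pi_v h\cdot Z$ up to remainders in $h-\Pi_v h$, while the remaining contributions are absorbed into the two norms on the right-hand side of \eqref{eq:avg}. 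This is exactly where the fourth moment of $\nabla_v\psi$ in Assumption~\ref{ass2} (needed for $\|G_i^{\mathcal M}\nabla_v\psi\|_{\rmL^2(\nu)}$) and the bound $|\nabla_x\phi|\le L$ of Assumption~\ref{a:hessian} (for the $\nabla_x\phi\cdot\nabla_v$ part of $\calT$) are actually used; your closing remark about the exponent $4$ gestures at this, but the computation it belongs to is absent from your Step~2.
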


\smallskip

We refer to~\cite[Section~7]{Brigati2023} for a discussion about the precise dimensional dependence of $C_{0,\tau},~C_{1,\tau}$.

\begin{proof}
  The proof closely follows the proof of~\cite[Lemma~2]{Brigati2023}. As a first step, we apply the weighted Lions inequality \eqref{eq:wtpl}, with $g = \Pi_v h$, to obtain
  \begin{equation}
    \label{eq:wtplpivh}
    \|\Pi_v h- \overline{\Pi_v h}\|_{\rmL^2(U_\tau\otimes \mu_W)}  \le \mathrm C_{\mathrm{Lions}} \left( \|\zeta^{-1}\partial_t \Pi_v h\|_{\mathrm{H}^{-1}(U_\tau\otimes\mu)} + \|\nabla_x \Pi_v h\|_{\mathrm{H}^{-1}(U_\tau\otimes\mu)} \right).
  \end{equation}
  We next separately bound the two terms on the right hand side of this inequality. We introduce to this end the space of~$H^1(U_\tau \otimes \mu)$ functions with Dirichlet boundary conditions in time:
  \[
  \mathrm{H}^1_\ZDCT(U_\tau \otimes \mu) = \left\{ g \in \mathrm{H}^1(U_\tau \otimes \mu) \, \middle| \, g(0,\cdot) = g(\tau,\cdot) = 0 \right\}.
  \]
  Note that the trace of functions in~$\mathrm{H}^1(U_\tau \otimes \mu)$ is well defined on~$\{0,\tau\} \times \R^d$.

  \smallskip
  
  To bound~$\|\zeta^{-1} \partial_t \Pi_v h\|_{\mathrm{H}^{-1}(U_\tau \otimes \mu)}$, we fix a test function~$z \in \mathrm{H}^1_\ZDCT(U_\tau \otimes \mu)$ with $\|z\|^2_{\mathrm{H}^1(U_\tau \otimes \mu)} \leq 1$; while the estimation of~$\|\nabla_x \Pi_v h\|_{\mathrm{H}^{-1}(U_\tau \otimes \mu)}$ can be performed by taking a test function~$Z =(Z_1,\dots,Z_d)\in \mathrm{H}^1_\ZDCT(U_\tau \otimes \mu)^d$ with $\|Z\|^2_{\mathrm{H}^1(U_\tau \otimes \mu)} \leq 1$. We remind the reader that the test functions~$z,~Z$ do not depend on the~$v$ variable.

  \medskip
  \noindent\textbf{Estimate of the time derivative.}
  For~$\zeta^{-1} \, \partial_t \Pi_v h$, we start by noticing that $\nabla_v \Pi_v h = 0$ and
  \[
  \int_{\R^d} \nabla_v \psi(v) \, \nu(\dif v) = 0,
  \]
  so that, by skew-symmetry of~$\partial_t + \calT$ and the boundary conditions for~$z$ in the integration by parts, 
  \[
  \begin{aligned}
    & \iint_{[0,\tau]\times\R^d} \zeta^{-1} (\partial_t \Pi_v h) \, z \dif U_\tau \dif \mu = \iiint_{[0,\tau]\times\R^d \times \R^d} \left[(\partial_t + \calT) \Pi_v  h \right] \zeta^{-1} z \dif U_\tau \dif \Theta \\
    & \qquad = \iiint_{[0,\tau]\times\R^d \times \R^d} [(\partial_t + \calT)  h]  \, \zeta^{-1}z \dif U_\tau \dif\Theta \\
    & \qquad\qquad + \iiint_{[0,\tau]\times\R^d \times \R^d} (h-\Pi_v h) \, (\partial_t + \calT)\left( \zeta^{-1} z\right)  \dif U_\tau \dif \Theta \\
    & \qquad\leq Z_W^{-1/2} \|(\partial_t + \calT) h\|_{\mathrm{L}^2(U_\tau \otimes \mu,\mathrm{H}^{-1}(\nu))} \|z\|_{\mathrm{L}^2(U_\tau \otimes \mu)} \|1\|_{\mathrm{H}^1(\nu)}
    \\ 
    & \qquad\qquad  +  \|h-\Pi_v h\|_{\mathrm{\mathrm{L}^2}(U_\tau \otimes \Theta)} \left\|(\partial_t + \calT)(\zeta^{-1}z)\right\|_{\mathrm{L}^2(U_\tau \otimes \Theta)}.
  \end{aligned}
  \]
  Since~$\|z\|_{\mathrm{H}^1(U_\tau \otimes\mu)}\le 1$, it remains to bound
  \begin{equation}
    \label{eq:term_to_bound_dt_avg}
    \left\|(\partial_t + \calT)(\zeta^{-1}z)\right\|_{\mathrm{L}^2(U_\tau \otimes \Theta)} \leq Z_W^{-1/2}\| \partial_t z \|_{\mathrm{L}^2(U_\tau \otimes \mu)} + \left\|\nabla_v \psi \cdot \nabla_x(\zeta^{-1} z) \right\|_{\mathrm{L}^2(U_\tau \otimes \Theta)}.
  \end{equation}
  The covariance matrix~$\mathscr{M}$, defined in~\eqref{eq:scrm}, is positive definite by~\cite[Appendix~A]{stoltz2018langevin}. Denoting by~$\rho(\mathscr{M)} > 0$ the largest eigenvalue of~$\mathscr{M}$, the last factor in~\eqref{eq:term_to_bound_dt_avg} can be bounded as
  \[
  \begin{aligned}
    \left\| \nabla \psi \cdot \nabla_x (\zeta^{-1}z) \right\|_{\mathrm{L}^2(U_\tau \otimes \Theta)}^2
    & = \int_0^\tau \int_{\R^d} \nabla_x (\zeta^{-1}z)^\top \mathscr{M} \nabla_x (\zeta^{-1}z) \dif U_\tau \dif \mu \\
    & \leq \rho(\mathscr{M}) \left\| \nabla_x (\zeta^{-1}z) \right\|_{\mathrm{L}^2(U_\tau \otimes \mu)}^2.
  \end{aligned}
  \]
  Finally, since~$\nabla_x(\zeta^{-1}z) = \zeta^{-1}\nabla_x z - \frac{\nabla_x \zeta}{\zeta}\zeta^{-1}z,$ the triangle inequality and~\eqref{eq:condW} imply
  \begin{equation*}
    \left\|\nabla_x (\zeta^{-1}z)\right\|_{\mathrm{L}^2(U_\tau \otimes \mu)}^2 \le 2Z_W^{-1} \left( \|\nabla_x z\|_{\mathrm{L}^2(U_\tau \otimes \mu)}^2 + \theta_W^2 \|z\|_{\mathrm{L}^2(U_\tau \otimes \mu)}^2 \right) \le 2Z_W^{-1}\max\{1,\theta_W^2\}.
\end{equation*}
  By taking the supremum over functions~$z \in \mathrm{H}^1_\ZDCT(U_\tau \otimes \mu)$ with~$\|z\|^2_{\mathrm{H}^1(U_\tau \otimes \mu)} \leq 1$, we finally obtain
  \begin{equation}
    \label{eq:dt_Pi_h_H-1}
    \begin{aligned}
      \| \zeta^{-1}\partial_t \Pi_v h\|_{\mathrm{H}^{-1}(U_\tau \otimes \mu)} & \leq Z_W^{-1/2} \, \|(\partial_t + \calT) h\|_{\mathrm{L}^2(U_\tau \otimes \mu,\mathrm{H}^{-1}(\nu))} \\
      & \ \ + Z_W^{-1/2} \, \left(1 + \sqrt{2\rho(\mathscr{M})}\max\{1,\theta_W\}\right)\|h-\Pi_v h\|_{\mathrm{\mathrm{L}^2}(U_\tau \otimes \Theta)} .
    \end{aligned}
  \end{equation}

  \medskip
  
  \noindent\textbf{Estimate of the spatial derivatives.}
  Turning to the estimation of~$\|\nabla_x \Pi_v h\|_{\mathrm{H}^{-1}(U_\tau \otimes \mu)}$, the result is shown with exactly the same computations as in~\cite[Lemma~2]{Brigati2023}, with the only difference that the role of~\cite[Assumption 4]{Brigati2023} is played here by Assumption~\ref{a:hessian}, which ensures that~$\|Z_i\nabla_x \phi\|_{\rmL^2(U_\tau\otimes\mu)} \le L\|Z_i\|_{\mathrm{H}^1(U_\tau\otimes\mu)}$. Introduce
  \[
  G = \frac{\nabla_v \psi}{\|\nabla_v \psi\|_{\mathrm{L}^2(\nu)}}, \qquad \mathcal{M} = \|\nabla_v \psi\|_{\mathrm{L}^2(\nu)}^2 \mathscr{M}^{-1},
  \qquad
  G^{\mathcal{M}} = \left(G^{\mathcal{M}}_1,\dots,G_d^\mathcal{M}\right) = \mathcal{M}G.
  \] 
  The arguments of \cite[Lemma~2]{Brigati2023} then yield
  \begin{align*}
    & \iint_{[0,\tau]\times\R^d} \nabla_x \Pi_v h  \cdot Z \dif U_\tau \dif \mu \\
    & \le \|\nabla_v \psi\|^{-1}_{\mathrm{L}^2(\nu)} \rho(\mathcal{M})\|G \|_{\mathrm{H}^1(\nu)} \|(\partial_t +\calT)h \|_{\mathrm{L}^2(U_\tau \otimes \mu, \mathrm{H}^{-1}(\nu) )} \\
    & \ \ + \|\nabla_v \psi\|^{-1}_{\mathrm{L}^2(\nu)}  \|h-\Pi_v h\|_{\mathrm{L}^2(U_\tau \otimes \Theta)}\left[\left( \sum_{i=1}^d \left\|G_i^{\mathcal{M}} \nabla_v \psi\right\|^2_{\mathrm{L}^2(\nu)}\right)^{1/2} + L \left(\sum_{i=1}^d \left\|\nabla_v G_i^{\mathcal{M}} \right\|^2_{\mathrm{L}^2(\nu)}\right)^{1/2}\right]  \\
    & \ \ + \rho(\mathcal{M}) \|\nabla_v \psi\|^{-1}_{\mathrm{L}^2(\nu)} \|h-\Pi_v h\|_{\mathrm{L}^2(U_\tau \otimes \Theta)}.
    \stepcounter{equation} \tag{\theequation}  \label{eq:dx_Pi_h_H-1}
  \end{align*}

  \medskip
  \noindent\textbf{Conclusion of the proof.}
  Combining \eqref{eq:wtplpivh}, \eqref{eq:dt_Pi_h_H-1} and taking the supremum over~$Z$ in~\eqref{eq:dx_Pi_h_H-1}, we conclude the proof of~\eqref{eq:avg} with 
  \begin{align}
    C_{0,\tau} = C_{\mathrm{Lions}} & \left( \|\nabla_v \psi\|^{-1}_{\mathrm{L}^2(\nu)} \left[\left( \sum_{i=1}^d \left\|G_i^{\mathcal{M}} \nabla_v \psi\right\|^2_{\mathrm{L}^2(\nu)}\right)^{1/2} + L \left(\sum_{i=1}^d \left\|\nabla_v G_i^{\mathcal{M}} \right\|^2_{\mathrm{L}^2(\nu)}\right)^{1/2}\right] \right. \nonumber \\
  & \left. \phantom{\left[\left( \sum_{i=1}^d \left\|G_i^{\mathcal{M}} \nabla_v \psi\right\|^2\right)^{1/2}\right]}\hspace{-3cm}+ Z_W^{-1/2}\left(1+\sqrt{2\rho(\mathscr{M})}\max\{1,\theta_W\}\right)+ \rho(\mathcal{M}) \|\nabla_v \psi\|^{-1}_{\mathrm{L}^2(\nu)} \right),
  \label{eq:c0tau} \\
  C_{1,\tau} &= C_{\mathrm{Lions}}\left( Z_W^{-1/2} +  \|\nabla_v \psi\|^{-1}_{\mathrm{L}^2(\nu)} \rho(\mathcal{M})\|G \|_{\mathrm{H}^1(\nu)} \right). \label{eq:c1tau} 
\end{align}
Note that all the quantities in~\eqref{eq:c0tau} and~\eqref{eq:c1tau} are finite thanks to Assumption~\ref{ass2}.
\end{proof}

\section{Proof of Theorem \ref{thm:weightedpiconv}: Convergence Result with Weighted Poincar\'e--Lions Inequality}
\label{sec:weightedpiproof}

It is well-known that~\eqref{VOU} is well posed and $h(t,\cdot,\cdot) \in \rmL^2(\Theta)$ for all~$t \geq 0$. In addition, the total mass is conserved:
\begin{equation*}
  \forall t \geq 0, \qquad \iint_{\R^d\times\R^d} h(t,x,v) \, \Theta(\dif x \dif v) = 0,
\end{equation*}
and the $\rmL^2$ norm of the solution is nonincreasing: 
\begin{equation}
  \label{ee}
  \frac{\dif}{\dif t} \iint_{\R^d\times\R^d} h(t,x,v)^2 \, \Theta (\dif x \dif v) = -2\gamma \iint_{\R^d\times\R^d} |\nabla_v h|^2 \, \dif \Theta \le 0.
\end{equation}
Solutions of \eqref{VOU} moreover satisfy the maximum principle:
\begin{equation}\label{eq:Linftybd}
  \forall t \geq 0, \qquad \|h(t,\cdot,\cdot)\|_{\rmL^\infty(\Theta)} \le \|h_0\|_{\rmL^\infty(\Theta)}.
\end{equation} 
As in~\cite{Cao2023, Brigati2023, eberle2024non}, after proving Theorem~\ref{thm:wtpi} and Lemma \ref{lm:avg}, one can prove the long-time convergence of solutions of~\eqref{VOU} using energy estimate on the time-averaged $\rmL^2$-energy
\begin{equation}\label{eq:calHt}
    \mathcal{H}_\tau(t) := \int_t^{t+\tau} \|h(s,\cdot,\cdot)\|^2_{\mathrm{L}^2(\Theta)} \, U_\tau(\dif s).
\end{equation}
An integration in time of~\eqref{ee} leads to the following estimate:
\begin{equation}
  \label{eefull}
  \frac{\dif}{\dif t}\mathcal{H}_\tau(t) = -\mathcal{D}_\tau(t),
\end{equation}
with the energy dissipation
\begin{equation}\label{eq:calDt}
  \mathcal D_\tau(t) := 2 \gamma \int_t^{t+\tau} \|\nabla_v h\|_{\rmL^2(\Theta)}^2 \,U_\tau(\dif s).
\end{equation}
The time-averaged functional inequalities obtained in Section~\ref{sec:weightedpi} allow to upper bound~$\mathcal{H}_\tau(t)$ by a function of~$\mathcal D_\tau(t)$, henceforth leading to a nonlinear differential inequality for~$\mathcal{H}_\tau(t)$, from which convergence is obtained by a Bihari--LaSalle argument. We make this stategy precise, by distinguishing two cases depending on whether~$\nu$ satisfies a Poincar\'e or a weighted Poincar\'e inequality.

\medskip

\begin{proof}[Proof of Theorem \ref{thm:weightedpiconv}]
  We fix~$t,\tau>0,$ and write 
  \begin{equation}\label{eq:L2decom}
    \mathcal{H}_\tau(t) = \int_t^{t+\tau} \|h-\Pi_v h\|^2_{\mathrm{L}^2(\Theta)} \dif U_\tau  + \int_t^{t+\tau} \|\Pi_v h\|^2_{\mathrm{L}^2(\mu)} \dif U_\tau,
\end{equation}
by orthogonal decomposition in $\mathrm{L}^2(U_\tau\otimes\Theta).$

\medskip\noindent\textbf{Case~(i).}
Let us first focus on the case when $\nu$ satisfies the Poincar\'e inequality~\eqref{eq:pinu}. The first contribution in~\eqref{eq:L2decom} is bounded from above by
\[
\frac{1}{C_{\mathrm{P},\nu}}\int_t^{t+\tau} \|\nabla_v h(s,\cdot,\cdot)\|^2_{\mathrm{L}^2(\Theta)} = \frac{1}{2\gamma C_{\mathrm{P},\nu}} \mathcal D_\tau(t).
\]
For the second contribution, we note that $\int_{\R^d} \Pi_v h(t,x) \, \mu(\dif x) =0$ for any~$t \geq 0$, so that, by standard properties of the average, we have, for all $a \in \R$,
\[
\int_t^{t+\tau} \|\Pi_v h(s,\cdot)\|^2_{\mathrm{L}^2(\mu)} \, U_\tau(\dif s) \leq \int_t^{t+\tau} \|\Pi_v h(s,\cdot)-a\|^2_{\mathrm{L}^2(\mu)} \, U_\tau(\dif s).
\]
In particular,
\begin{equation}
  \label{eq:minusavg}
  \int_t^{t+\tau} \|\Pi_v h(s,\cdot)\|^2_{\mathrm{L}^2(\mu)} \, U_\tau(\dif s) \leq \int_t^{t+\tau} \|\Pi_v h(s,\cdot)-\overline{\Pi_v h}\|^2_{\mathrm{L}^2(\mu)} \, U_\tau(\dif s).
\end{equation}
Upon rewriting, for $\sigma$ given by~\eqref{int}, 
\[
(\Pi_v h -\overline{\Pi_v h})^2 = \left[(\Pi_v h -\overline{\Pi_v h})^2 \zeta^{-2} \right]^{\sigma/(2+\sigma)} \left[(\Pi_v h -\overline{\Pi_v h})^2 \zeta^{\sigma} \right]^{2/(2+\sigma)}, 
\]
H\"older's inequality implies that the right hand side of~\eqref{eq:minusavg} is controlled by
\begin{equation}
  \label{eq:holder1}
  \left( \int_t^{t+\tau} \|\Pi_v h(s,\cdot)-\overline{\Pi_v h}\|^2_{\mathrm{L}^2(\mu_W)} \, U_\tau(\dif s) \right)^{\frac{\sigma}{\sigma+2}} \left( \int_t^{t+\tau} \int_{\R^d} (\Pi_v h -\overline{\Pi_v h})^2 Z_W^{\sigma/2} W^\sigma \dif U_\tau \dif \mu  \right)^{\frac{2}{\sigma+2}}.
\end{equation}
 The idea is to use Lemma~\ref{lm:avg} to control the first term of~\eqref{eq:holder1}, while the second term can be bounded by~$\|h_0\|_{\rmL^\infty(\Theta)}$, thanks to~\eqref{int}. For the first term of \eqref{eq:holder1},
\begin{align*}
  & \left( \int_t^{t+\tau} \|\Pi_v h(s,\cdot)-\overline{\Pi_v h}\|^2_{\mathrm{L}^2(\mu_W)} \, U_\tau(\dif s) \right)^{\frac{\sigma}{\sigma+2}} \\
  & \leq  2^{\frac{\sigma}{\sigma+2}} \left( C^2_{0,\tau}\int_t^{t+\tau} \|h-\Pi_v h\|^2_{\rmL^2( \Theta)}\dif U_\tau  + C^2_{1,\tau}\int_t^{t+\tau} \|(\partial_t + \calT)h\|^2_{\rmL^2(\mu;\mathrm{H}^{-1}(\nu))} \dif U_\tau  \right)^{\frac{\sigma}{\sigma+2}} \stepcounter{equation} \tag{\theequation}\label{eq:pivii} \\
  & \leq 2^{\frac{\sigma}{\sigma+2}} \left( C^2_{0,\tau}C_{\mathrm{P},\nu}^{-1}\int_t^{t+\tau} \|\nabla_v h\|^2_{\rmL^2( \Theta)}\dif U_\tau+ \gamma^2 \, C^2_{1,\tau}\int_t^{t+\tau}\| \nabla_v^\star \nabla_v h\|^2_{\rmL^2( \mu; \mathrm{H}^{-1}(\nu))} \dif U_\tau \right)^{\frac{\sigma}{\sigma+2}}   \\
  & \le 2^{\frac{\sigma}{\sigma+2}} \left( C^2_{0,\tau}C_{\mathrm{P},\nu}^{-1} +\gamma^2 C^2_{1,\tau} \right)^{\frac{\sigma}{\sigma+2}} \left(\frac{1}{2\gamma}\mathcal{D}_\tau(t)\right)^{\frac{\sigma}{\sigma+2}} = \left( \frac{C^2_{0,\tau}}{\gamma C_{\mathrm{P},\nu}} +\gamma C^2_{1,\tau} \right)^{\frac{\sigma}{\sigma+2}} \mathcal{D}_\tau(t)^{\frac{\sigma}{\sigma+2}},
\end{align*}
where we used \eqref{eq:avg}, the Poincar\'e inequality~\eqref{eq:pinu} for~$\nu$ and the fact that $h$ solves \eqref{VOU}; as well as the following inequality for~$g \in \rmL^2(\nu)^d$:
\begin{equation}
  \label{eq:nabla_v_star_bound}
  \begin{aligned}
    \| \nabla_v^\star g \|_{\mathrm{H}^{-1}(\nu)} & = \sup_{\| z \|_{\mathrm{H}^1(\nu)} \leq 1} \left\langle\nabla_v^*g,z\right\rangle_{\mathrm{H}^{-1}(\nu),\mathrm{H}^1(\nu)} \\
    & = \sup_{\| z \|_{\mathrm{H}^1(\nu)} \leq 1} \int_{\R^d} g^\top \nabla_v z \dif \nu \leq \|g\|_{\rmL^2(\nu)}.
  \end{aligned}
\end{equation}
On the other hand, since $\|\Pi_v h\|_{\rmL^\infty(\mu)} \le \| h\|_{\rmL^\infty(\Theta)}$, 
\[
\left( \int_t^{t+\tau} \int_{\R^d} (\Pi_v h -\overline{\Pi_v h})^2 Z_W^{\sigma/2} W^\sigma \dif U_\tau \dif \mu \right)^{\frac{2}{\sigma+2}} \leq 4^{\frac{2}{\sigma+2}} \,Z_W^{\frac{\sigma}{\sigma+2}} \, \|h_0\|^{\frac{4}{\sigma+2}}_{\mathrm{L}^\infty(\Theta)} \, \|W\|^{\frac{2\sigma }{\sigma+2}}_{\mathrm{L}^\sigma(\mu)} =: C_{2}.
\]
This allows finally to estimate~\eqref{eq:L2decom} as
\begin{equation}\label{ee2}
  \mathcal{H}_\tau(t) \leq \frac{1}{2\gamma C_{\mathrm{P},\nu}} \mathcal{D}_\tau(t) + C_2  \left( \frac{C^2_{0,\tau}}{\gamma C_{\mathrm{P},\nu}} + \gamma C^2_{1,\tau} \right)^{\frac{\sigma}{\sigma+2}} \mathcal{D}_\tau(t)^{\frac{\sigma}{\sigma+2}}.
\end{equation}
Let $\varphi: \mathbb R_+ \mapsto \mathbb R_+$ be the inverse of the increasing function~$y \mapsto A_1 y + A_2 y^{\frac{\sigma}{\sigma+2}}$ with~$A_1 = 1/(2\gamma C_{\mathrm{P},\nu})$ and~$A_2 = C_2 \left(\gamma^{-1} C^2_{0,\tau} C_{\mathrm{P},\nu}^{-1}+\gamma C^2_{1,\tau}\right)^{\frac{\sigma}{\sigma+2}}$. Then, \eqref{ee2} can be rewritten as
\begin{equation}\label{eq:BL1}
   \varphi\left( \mathcal{H}_\tau(t) \right) \leq  \mathcal{D}_\tau(t) .
\end{equation}
For $0 \leq y \le \varphi(\mathcal{H}_\tau(0))=:\varphi_0$, it holds~$\varphi^{-1}(y) \le (A_1 \varphi_0^{2/(\sigma+2)}+A_2)y^\frac{\sigma}{\sigma+2}$, and so~$\varphi(z) \geq (A_1 \varphi_0^{2/(\sigma+2)}+A_2)^{-\frac{\sigma+2}{\sigma}}z^\frac{\sigma+2}{\sigma}$ for $z = \varphi^{-1}(y) \le \mathcal{H}_\tau(0)$. Substituting this into \eqref{eq:BL1} (note that $\mathcal{H}_\tau(t)\le \mathcal{H}_\tau(0)$ by \eqref{ee}) gives
\[
\frac{\dif}{\dif t}\mathcal{H}_\tau(t) \leq -\left(A_1 \varphi_0^{2/(\sigma+2)}+A_2\right)^{-\frac{\sigma+2}{\sigma}} \mathcal{H}_\tau(t)^\frac{\sigma+2}{\sigma}.
\]
Solving the ODE, we end up 
with \begin{equation}\label{eq:BLcase1}
  \mathcal{H}_\tau(t) \le \left(\mathcal{H}_\tau(0)^{-2/\sigma} + \frac{2\left(A_1 \varphi_0^{2/(\sigma+2)}+A_2\right)^{-\frac{\sigma+2}{\sigma}}}{\sigma}t\right)^{-\frac{\sigma}{2}}.
\end{equation}

\medskip\noindent\textbf{Case~(ii).}
The main difference with case~(i) is the treatment of~$\int_t^{t+\tau} \|h-\Pi_v h\|^2_{\rmL^2( \Theta)} \dif U_\tau$ in~\eqref{eq:L2decom}: instead of directly using the Poincar\'e inequality~\eqref{eq:pinu}, we use H\"older's inequality in a way similar to~\eqref{eq:holder1} in conjunction with the weighted Poincar\'e inequality~\eqref{eq:wtpinu} in~$\nu$ to obtain 
\begin{align*}
  & \int_t^{t+\tau}\|h-\Pi_v h\|^2_{\rmL^2( \Theta)} \dif U_\tau \\
  & \le \left(\int_t^{t+\tau} \iint_{\R^d \times \R^d} \mathcal{G}^{-2} |h-\Pi_v h|^2 \dif U_\tau \dif \Theta\right)^{\frac{\delta}{2+\delta}}\left(\int_t^{t+\tau} \iint_{\R^d \times \R^d} \mathcal{G}^\delta |h-\Pi_v h|^2 \dif U_\tau \dif \Theta\right)^{\frac{2}{2+\delta}} \\
  & \le 4^\frac{2}{2+\delta}P_v^\frac{\delta}{2+\delta}\left(\int_{\R^d} \mathcal{G}^\delta \dif \nu\right)^\frac{2}{2+\delta}\|h_0\|_{\rmL^\infty(\Theta)}^{\frac{4}{2+\delta}}\left(\int_t^{t+\tau}\|\nabla_v h\|_{\rmL^2(\Theta)}^2 \dif U_\tau\right)^{\frac{\delta}{2+\delta}}  =: C_3\mathcal{D}_\tau(t)^{\frac{\delta}{2+\delta}},
\end{align*}
with
\[
C_3 = 2^{\frac{4-\delta}{2+\delta}} P_v^\frac{\delta}{2+\delta} \gamma^{-\frac{\delta}{2+\delta}}\|h_0\|_{\rmL^\infty(\Theta)}^{\frac{4}{2+\delta}}\|\mathcal{G}\|_{\rmL^\delta(\nu)}^{\frac{2\delta}{2+\delta}}.
\]
Substituting this into \eqref{eq:L2decom} and \eqref{eq:pivii} leads to the energy dissipation inequality
\begin{align*}
  \mathcal{H}_\tau(t) & \leq C_3 \mathcal{D}_\tau(t)^{\frac{\delta}{2+\delta}} + C_2\left[2C_{0,\tau}^2 C_3 \mathcal{D}_\tau(t)^{\frac{\delta}{2+\delta}} + \gamma C_{1,\tau}^2 \mathcal{D}_\tau(t) \right]^\frac{\sigma}{\sigma+2} .
\end{align*}
This also leads to an inequality of the same form as~\eqref{eq:BL1}, but with a different function~$\varphi$ given by the inverse of the increasing function~$y \mapsto C_3 y^{\frac{\delta}{2+\delta}} + C_2\left(2 C_{0,\tau}^2C_3 y^{\frac{\delta}{2+\delta}} + \gamma C_{1,\tau}^2 y \right)^\frac{\sigma}{\sigma+2}$. The final step is also similar to case~(i): if $y \le \varphi(\mathcal{H}_\tau(0))=:\varphi_0$, then $\varphi^{-1}(y)\le By^{\frac{\delta\sigma}{(\delta+2)(\sigma+2)}}$ with
\[
B = C_3 \varphi_0^{\frac{2\delta}{(\delta+2)(\sigma+2)}} + C_2 \left(2C_{0,\tau}^2C_3 + \gamma C_{1,\tau}^2\varphi_0^\frac{2}{\delta+2} \right)^\frac{\sigma}{\sigma+2},
\]
and consequently $\varphi(z) \geq B^{-\frac{(\delta+2)(\sigma+2)}{\delta\sigma}}z^{\frac{(\delta+2)(\sigma+2)}{\delta\sigma}}$ for $z = \varphi^{-1}(y) \le \mathcal{H}_\tau(0)$. Substituting this into the ODE, we finally obtain
\begin{equation}
  \label{eq:BLcase2}
  \mathcal{H}_\tau(t) \le \left(\mathcal{H}_\tau(0)^{-\frac{2(\sigma+\delta+2)}{\sigma\delta}} + \frac{2(\sigma+\delta+2)B^{-\frac{(\delta+2)(\sigma+2)}{\delta\sigma}}}{\sigma\delta}t\right)^{-\frac{\sigma\delta}{2(\sigma+\delta+2)}}.
\end{equation}
This concludes the proof as $\|h(t)\|_{\rmL^2(\Theta)}^2 \le \mathcal{H}_\tau(t-\tau)$ for~$t\geq \tau$, and $\calH(0) \le \|h_0\|_{\rmL^2(\Theta)}^2$ in view of~\eqref{ee}.
\end{proof}

\section{Proof of Theorem \ref{thm:weakPIconv}}
\label{sec:weakpi}

Our goal in this section is give another proof of long-time convergence rate for solutions of~\eqref{VOU}, which allows in particular to obtain stretched exponential rates. The weighted Poincar\'e--Lions inequality in Theorem~\ref{thm:wtpi} and the averaging result of Lemma~\ref{lm:avg} are still important building blocks for our proof. In this section, we will use the Dirichlet form~$\calD$ to control~$\calH$ in a slightly different way: since in general~$\calD$ only controls a part of~$\calH$, we consider an additional term involving the functional~$\Phi=\|\cdot\|^2_\osc$.

\subsection{Weak dissipation inequality for solutions of~\eqref{VOU}}
The aim of this section is to obtain a control of the time averaged~$\rmL^2$ norm of solutions of~\eqref{VOU} in terms of the dissipation functional~\eqref{eq:calDt} and an additional term~$\Phi(h_0)$ (recall that~$\Phi$ is introduced in Definition~\ref{def:WPI}). We distinguish the case when~$\nu$ satisfies a Poincar\'e inequality, and when~$\nu$ satisfies only a weak Poincar\'e inequality, as given by the following assumption. 

\medskip

\begin{assumption}
  \label{assm:wpi_v}
  There is a function $\beta_v:(0,\infty)\to [0,\infty)$ satisfying the standard conditions given in Definition~\ref{def:WPI}. In particular, for any $s>0$,  
  \begin{equation*}
    \forall f \in \mathrm{H}^1(\nu) \cap \rmL^\infty(\R^d), \qquad \|f-\nu(f)\|^2_{\rmL^2(\nu)} \le s \|\nabla_v f\|^2_{\rmL^2(\nu)} + \beta_v(s) \Phi(f).
  \end{equation*}
\end{assumption}

The following result provides a control of~$\mathcal{H}_\tau(t)$ in terms of~$\mathcal{D}_\tau(t)$ and~$\Phi(h_0)$ (recall that~$\mathcal{H}_\tau(t)$ and~$\mathcal{D}_\tau(t)$ are defined in \eqref{eq:calHt} and \eqref{eq:calDt} respectively).

\medskip

\begin{lemma}
  \label{lem:weak_Poincare_solutions_VOU}
  Fix~$\tau>0$, and an initial condition $h_0 \in L^\infty(\R^d)$ with $\int_{\R^d \times \R^d} h_0 \dif \Theta = 0$. Suppose that Assumptions~\ref{a:wp},~\ref{a:hessian},~\ref{ass2} hold true, and that~$\nu$ satisfies either the Poincar\'e inequality~\eqref{eq:pinu} or the weak Poincar\'e inequality from Assumption~\ref{assm:wpi_v}. Then, we have the weak dissipation inequality: 
  \begin{equation}
    \forall s>0, \qquad
    \mathcal{H}_\tau(t)\le s\mathcal{D}_\tau(t) + \beta_{\mathrm{kin}}(s) \Phi (h_0),
    \label{eq:abstract_wpi}
  \end{equation}
  with the following function~$\beta_{\mathrm{kin}}$ depending on the functional inequality satisfied by~$\nu$:
  \begin{itemize}
      \item when~$\nu$ satisfies the Poincar\'e inequality~\eqref{eq:pinu}, 
      \begin{equation}
          \beta_{\mathrm{kin}}(s) = \mu\left( s\le \widetilde C_{\tau,\nu,\gamma} W^2 + \frac{1}{2\gamma C_{\mathrm{P},\nu}}\right),
          \label{eq:beta_pi_mod}
      \end{equation}
      where~$\widetilde C_{\tau,\nu,\gamma} := Z_W(C^2_{0,\tau}/(\gamma C_{\mathrm{P},\nu})+ C_{1,\tau}^2 \gamma)$.
    \item when~$\nu$ satisfies the weak Poincar\'e inequality in Assumption~\ref{assm:wpi_v} with $\beta_v$ strictly positive, 
      \begin{equation}
        \label{eq:beta_related_overline_beta}
        \beta_{\mathrm{kin}}(s) = {\overline{C}\, \overline{\beta} (2\gamma \,s)}
      \end{equation}
      for a constant $\overline C>0$, explicitly constructed in the proof, and
      \begin{equation}
        \overline{\beta}(s):=\inf\left\{ s_1 \beta_v(s_2-c) + \beta_x(s_1) \, \middle | \, s_1>0, \, s_2>0, \, s_1 s_2 = s\right\},
        \label{eq:beta_form_mod}
      \end{equation}
      where
      \begin{equation}
        \label{eq:beta_x}
        \beta_x(s_1) := \mu\left( s_1 \le 2Z_W {C^2_{0,\tau}} W^2{+1}  \right)
      \end{equation}
      is assumed to be positive and $c=\gamma^2 C^2_{1,\tau}/C^2_{0,\tau}$.
  \end{itemize}
\end{lemma}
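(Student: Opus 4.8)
The strategy is to revisit the orthogonal decomposition~\eqref{eq:L2decom},
\[
\mathcal{H}_\tau(t) = \int_t^{t+\tau} \|h-\Pi_v h\|^2_{\rmL^2(\Theta)} \dif U_\tau + \int_t^{t+\tau} \|\Pi_v h\|^2_{\rmL^2(\mu)} \dif U_\tau,
\]
and to bound each of the two pieces not by a power of $\mathcal{D}_\tau(t)$ (as in the proof of Theorem~\ref{thm:weightedpiconv}), but rather by a term of the form $s\,\mathcal{D}_\tau(t) + (\text{weak remainder})\cdot\Phi(h_0)$. The microscopic part $\|h-\Pi_v h\|^2_{\rmL^2(\Theta)}$ is handled directly by the functional inequality assumed on $\nu$: in the Poincar\'e case one simply gets $\le (2\gamma C_{\mathrm P,\nu})^{-1}\mathcal{D}_\tau(t)$; in the weak Poincar\'e case one applies Assumption~\ref{assm:wpi_v} pointwise in $(t,x)$ with a parameter $s_2$, giving $s_2\|\nabla_v h\|^2_{\rmL^2(\Theta)} + \beta_v(s_2)\Phi(h)$ after integration, and $\Phi(h(t,\cdot,\cdot))\le\Phi(h_0)$ by the maximum principle~\eqref{eq:Linftybd}.

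The macroscopic part is the heart of the argument. As in the proof of Theorem~\ref{thm:weightedpiconv}, one subtracts the average and uses the averaging Lemma~\ref{lm:avg} together with the equation~\eqref{VOU} and~\eqref{eq:nabla_v_star_bound} to obtain a bound of the shape
\[
\|\Pi_v h - \overline{\Pi_v h}\|^2_{\rmL^2(\mu_W)} \le 2Z_W\Big(C^2_{0,\tau}C_{\mathrm P,\nu}^{-1} + \gamma^2 C^2_{1,\tau}\Big)\frac{1}{2\gamma}\mathcal D_\tau(t)
\]
in the Poincar\'e case — more precisely $\|\Pi_v h-\overline{\Pi_v h}\|^2_{\rmL^2(\mu)} \le \widetilde C_{\tau,\nu,\gamma}\,W^2$-weighted version — and an analogous bound with $\beta_v$-remainder in the weak case. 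The key new device, replacing H\"older's inequality, is the observation that for a bounded function $g$ with $\mu(g)=0$ and any level $s_1>0$,
\[
\int_{\R^d} g^2\dif\mu \le s_1 \int_{\R^d}\frac{g^2}{W^2}\dif\mu + \|g\|^2_\osc\,\mu\big(s_1 \le W^2\big),
\]
i.e. one splits the domain into the region where $s_1 W^{-2}\ge 1$ (controlled by the $\mu_W$-type norm) and its complement (of small $\mu$-measure, controlled by the oscillation). Applying this with $g = \Pi_v h - \overline{\Pi_v h}$, using $\|\Pi_v h - \overline{\Pi_v h}\|_\osc \le 2\|h_0\|_{\rmL^\infty}$ and then $\Phi(h_0) = \|h_0\|^2_\osc \ge \|h_0\|^2_{\rmL^\infty}$ up to constants (this is where the constant $\overline C$, and the normalization choices, enter), one converts the $\mu_W$-bound on $\|\Pi_v h-\overline{\Pi_v h}\|$ into the desired form. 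In the Poincar\'e case this directly yields~\eqref{eq:beta_pi_mod} with $\beta_{\mathrm{kin}}(s) = \mu(s\le \widetilde C_{\tau,\nu,\gamma}W^2 + (2\gamma C_{\mathrm P,\nu})^{-1})$ after combining the two pieces and matching the coefficient of $\mathcal D_\tau(t)$ to $s$.

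In the weak Poincar\'e case one must compose the two levels of ``weakness'': the spatial splitting gives $\beta_x(s_1) = \mu(s_1 \le 2Z_W C^2_{0,\tau}W^2 + 1)$, while the velocity inequality contributes $\beta_v(s_2)$; the shift by $c = \gamma^2 C^2_{1,\tau}/C^2_{0,\tau}$ in~\eqref{eq:beta_form_mod} arises from absorbing the $C^2_{1,\tau}\|(\partial_t+\calT)h\|^2$ term from Lemma~\ref{lm:avg} into the $C^2_{0,\tau}\|\nabla_v h\|^2$ term (both ultimately bounded by $\mathcal D_\tau(t)$ via the equation). Optimizing over the factorization $s_1 s_2 = s$ (with the rescaling $2\gamma$ accounting for $\mathcal D_\tau = 2\gamma\int\|\nabla_v h\|^2$) produces $\overline\beta$ and hence~\eqref{eq:beta_related_overline_beta}. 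The main obstacle is bookkeeping: carefully tracking all the constants $C_{0,\tau}, C_{1,\tau}, Z_W, \gamma, C_{\mathrm P,\nu}$ through the two-layer splitting so that the final coefficient of $\mathcal D_\tau(t)$ is exactly the free parameter $s$ and the remainder is exactly $\beta_{\mathrm{kin}}(s)\Phi(h_0)$ with the stated $\beta_{\mathrm{kin}}$; the monotonicity/positivity hypotheses on $\beta_x$ and $\beta_v$ are used to ensure $\overline\beta$ is well-defined, finite, and decreasing to $0$, which is needed for the Bihari--LaSalle argument in the proof of Theorem~\ref{thm:weakPIconv}.
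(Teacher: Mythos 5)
Your plan follows essentially the same route as the paper's proof: the decomposition~\eqref{eq:L2decom}, the level-set splitting of the macroscopic term over $\{W<a\}$ and $\{W\geq a\}$ combined with Lemma~\ref{lm:avg}, the equation together with~\eqref{eq:nabla_v_star_bound} and the maximum principle, and the chaining over $s_1 s_2 = s$ with the shift $c$ in the weak case. The only detail where your account differs is the origin of $\overline C$: in the paper it does not come from normalizing the oscillation against the $\rmL^\infty$ norm, but from comparing the constrained infimum (over $s_1>1$, $s_2>c$, which the substitutions $s_1+1\mapsto s_1$, $s_2+c\mapsto s_2$ naturally produce) with the unconstrained $\overline\beta(2\gamma s)$, using that the optimizers diverge as $s\to\infty$.
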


\begin{proof}
  The argument we present is inspired by~\cite[Theorem~4.2]{cattiaux2010functional}. Given a function $h(t,x,v) \in \rmL^2(U_\tau \otimes \Theta)$ and $(\partial_t + \calT)h \in \rmL^2(U_\tau \otimes \mu; \mathrm{H}^{-1}(\nu))$ with $\int_{[0,\tau] \times \R^d \times \R^d} h \dif U_\tau \dif \Theta =0$, we have, for any~$a \in (0,\infty)$,
  \begin{align*}
    & \|h\|^2_{\rmL^2(U_\tau \otimes \Theta)}
    \stackrel{\eqref{eq:L2decom}, \eqref{eq:minusavg}}{\le} \|h-\Pi_v h\|^2_{\rmL^2(U_\tau\otimes \Theta)} + \left\| \Pi_v h- \overline{\Pi_v h} \right\|^2_{\rmL^2(U_\tau\otimes \mu)}  \\ 
    &\le  \|h-\Pi_v h\|^2_{\rmL^2(U_\tau\otimes \Theta)} + \int_0^\tau \int_{\{W < a\}} \left(\Pi_v h- \overline{\Pi_v h}\right)^2 \frac{a^2}{W^2} \dif U_\tau \dif \mu \\
    &\quad + \int_0^\tau \int_{ \{W \geq a\} } \left(\Pi_v h- \overline{\Pi_v h}\right)^2 \dif U_\tau \dif \mu\\
    &\le \|h-\Pi_v h\|^2_{\rmL^2(U_\tau\otimes \Theta)} + a^2 Z_W\int_{[0,\tau]\times \R^d} \left(\Pi_v h- \overline{\Pi_v h}\right)^2  \dif U_\tau \dif \mu_W \\
    &\quad+ \mu(a\le W)\Phi(\Pi_v h) \\
    &\le (2a^2  Z_W C^2_{0,\tau}+1) \|h-\Pi_v h\|^2_{\rmL^2(U_\tau\otimes \Theta)} + 2 a^2 Z_W C^2_{1,\tau}\|(\partial_t + \calT)h\|^2_{\rmL^2(U_\tau \otimes \mu; \mathrm{H}^{-1}(\nu))}\\
        &\quad+ \mu(a\le W) \Phi(h), 
 \stepcounter{equation} \tag{\theequation}\label{eq:weakplh}
\end{align*}
where we have used Lemma~\ref{lm:avg}. Here we also denote, with a slight abuse of notation, 
\[
\Phi(h) = \sup_{t\in [0,\tau]}\|h(t,\cdot,\cdot)\|^2_\osc.
\]
By definition of the oscillation norm, $\Phi(\Pi_v h) \le \Phi(h)$. 

\smallskip

Now, for~$h$ a solution of~\eqref{VOU}, and in view of~\eqref{eq:nabla_v_star_bound},
\begin{align*}
  \int_t^{t+\tau} \|(\partial_t + \calT)h(s,\cdot,\cdot)\|^2_{\rmL^2(\mu; \mathrm{H}^{-1}(\nu))} \dif U_\tau & = \gamma^2 \int_t^{t+\tau}\|\nabla_v^\star \nabla_v h(s,\cdot,\cdot)\|^2_{\rmL^2(\mu; \mathrm{H}^{-1}(\nu))} \dif U_\tau\\ & \le \gamma^2 \int_t^{t+\tau}\| \nabla_v h(s,\cdot,\cdot)\|^2_{\rmL^2( \Theta)} \dif U_\tau. 
\end{align*} 
By the maximum principle, we also have that $\Phi(h)\le \Phi(h_0)$, and we therefore obtain from~\eqref{eq:weakplh} that
\begin{equation}
  \begin{aligned}
    & \int_t^{t+\tau}\|h(s,\cdot,\cdot)\|^2_{\rmL^2(\Theta)}\dif U_\tau \\
    & \quad \le (2a^2  Z_W C^2_{0,\tau}+1) \int_t^{t+\tau}\|(h-\Pi_v h)(s,\cdot,\cdot)\|^2_{\rmL^2(\Theta)}\dif U_\tau \\
    & \quad \qquad + 2 a^2 Z_W C^2_{1,\tau}\gamma^2 \int_t^{t+\tau}\| \nabla_v h(s,\cdot,\cdot)\|^2_{\rmL^2( \Theta)} \dif U_\tau+ \mu(a\le W) \Phi(h_0).
  \end{aligned}
  \label{eq:h_WPI1}
\end{equation}

We first consider the case when $\nu$ satisfies a weak Poincar\'e inequality (Assumption~\ref{assm:wpi_v}), for a strictly positive function $\beta_v>0$. 
In view of Assumption~\ref{assm:wpi_v}, we then have, for any $a>0$ and~$s_2>0$,
\begin{align*}
    & \int_t^{t+\tau}\|h(s,\cdot,\cdot)\|^2_{\rmL^2(\Theta)}\dif U_\tau \\
    & \quad \le \left(  (2a^2 Z_W C^2_{0,\tau}+1)s_2   + 2 a^2 Z_W C^2_{1,\tau}\gamma^2 \right) \int_t^{t+\tau}\| \nabla_v h(s,\cdot,\cdot)\|^2_{\rmL^2(\Theta)} \dif U_\tau\\
    & \quad \qquad +\left[ \mu(a\le W) +(2a^2  Z_W C^2_{0,\tau}+1)\beta_v(s_2)\right]\Phi(h_0) \\
    & \quad \le \frac{1}{2\gamma}\left( {(s_1+1 )s_2   +  s_1\gamma^2 C^2_{1,\tau}/C^2_{0,\tau}  }\right) \mathcal{D}_\tau(t) \\ & \quad \qquad + \left[ \mu\left(s_1 \le 2 Z_W {C^2_{0,\tau} }W^2\right) + {{(s_1+1)}}\beta_v(s_2)\right]\Phi(h_0)\\
    &\quad \le \frac{1}{2\gamma}{(s_1+1)(s_2+c)} \mathcal{D}_\tau(t) + \left[ \mu\left(s_1 \le 2 Z_W {C^2_{0,\tau} }W^2\right) + {{(s_1+1)}}\beta_v(s_2)\right]\Phi(h_0),
    \end{align*}
where we introduced $s_1:=2Z_W{ C^2_{0,\tau}} a^2$ and {$c:=\gamma^2 C^2_{1,\tau}/C^2_{0,\tau}$}, and used that the condition~$a \leq W$ is equivalent to~$s_1 \le 2Z_W C^2_{0,\tau} W^2$. Now as in  \textit{chaining} of weak Poincar\'e inequalities~\cite[Theorem~33]{Andrieu2022}, we set $s={(s_1+1)(s_2+c) }/(2\gamma)$. 
Minimizing over $s_1,s_2>0$ with $s$ fixed, we arrive at~\eqref{eq:abstract_wpi} with~$\beta_{\mathrm{kin}}$ replaced by
\begin{equation}
  \widetilde{\beta}(s)=\inf\left\{{ (s_1+1)} \beta_v(s_2) + \mu\left( s_1 \le 2Z_W  {C^2_{0,\tau}} W^2  \right) \, \middle | \, s_1>0, \, s_2>0, \, s = S(s_1,s_2)\right\},
  \label{eq:beta_form_mod_2}
\end{equation}
where $S(s_1,s_2)={(s_1+1)(s_2+c)}/(2\gamma)$. It remains to prove that $\beta_{\mathrm{kin}}(s) \geq \widetilde{\beta}(s)$ for~$\beta_{\mathrm{kin}}(s)$ defined in~\eqref{eq:beta_related_overline_beta}. 

\smallskip

{By the change of variables $s_1+1 \mapsto s_1, \, s_2+c \mapsto s_2$, we can rewrite \eqref{eq:beta_form_mod_2} as
\begin{equation*}
    \begin{split}
        \widetilde \beta(s) = \inf\left\{ s_1 \beta_v(s_2-c) + \beta_x(s_1) \ \middle | \ s_1 > 1, s_2 > c, s_1 s_2 = 2\gamma \, s  \right\},
    \end{split}
\end{equation*}
where we recall that~$\beta_x$ is defined in \eqref{eq:beta_x}, and we define $\beta_v(s):=\beta_v(0)$ for $s<0$.
Since we have a weak Poincar\'e inequality with functions~$\beta_v$ and~$\beta_x$ with~$\beta_x(s_1) \to 0$ as~$s_1 \to +\infty$ and also for~$\beta_v$, it holds $\widetilde{\beta}(s)\to 0$ as $s\to\infty$ (see the proof of \cite[Theorem~33]{Andrieu2022} for an explicit argument). In the limit~$s\to \infty$, the values of $s_1,s_2$ which saturate the infimum in \eqref{eq:beta_form_mod_2}-\eqref{eq:beta_form_mod} must also diverge to $+\infty$ (to see this, note first that~$s_1$ has to diverge otherwise the second term in the argument of the infimum would be positive; and next since $\beta_v$ is positive, $s_2$ needs to diverge otherwise the first term in the argument of the infimum would diverge). Thus there is a threshold $M>0$ such that, provided $s\ge M$, we need only consider values $s_1\ge 1$ and $s_2\ge c $ (using that~$\beta_x,\beta_v$ have positive values). In other words, provided that $s>M$, we further have that
\[
\widetilde \beta(s) = \inf\left\{ s_1 \beta_v(s_2-c) + \beta_x(s_1) \ \middle | \ s_1 > 0, s_2 > 0, s_1 s_2 = 2\gamma \, s \right\} = \overline \beta(2\gamma\, s).
\]
We have thus established that~$\widetilde{\beta}(s) \leq \overline{\beta}(2 \gamma s)$ for~$s > M$. Upon defining 
\[
\overline{C} := \max\left\{ 1, \sup_{0 \leq s \leq M} \frac{\widetilde{\beta}(s)}{\overline {\beta}(2\gamma s)} \right\} < +\infty,
\]
we can conclude that~$\widetilde{\beta}(s) \leq \beta_{\mathrm{kin}}(s)$ for any~$s \geq 0$, as desired.
}

\smallskip

When $\nu$ satisfies the Poincar\'e inequality \eqref{eq:pinu}, which includes the case when $\nu$ is a standard Gaussian distribution, the result follows by going back to~\eqref{eq:h_WPI1} and performing similar estimates with~$s_2=1$, namely
  \[
  \begin{aligned}
    &   \int_t^{t+\tau}\|h(s,\cdot,\cdot)\|^2_{\rmL^2(\Theta)}\dif U_\tau \\
    & \le \left( \frac{2a^2  Z_W C^2_{0,\tau}+1}{C_{\mathrm{P},\nu}} + 2 a^2 Z_W C^2_{1,\tau}\gamma^2 \right) \int_t^{t+\tau}\| \nabla_v h(s,\cdot,\cdot)\|^2_{\rmL^2( \Theta)} \dif U_\tau+ \mu(a\le W) \Phi(h_0) \\
    & \le \left( Z_W a^2 \frac{C^2_{0,\tau}/ {C_{\mathrm{P},\nu}} + C^2_{1,\tau}\gamma^2}{\gamma} + \frac{1}{2\gamma C_{\mathrm{P},\nu}} \right) \mathcal{D}_\tau(t) + \mu(a\le W) \Phi(h_0),
  \end{aligned}
  \]
  which indeed leads to~\eqref{eq:beta_pi_mod}.
\end{proof}

  \subsection{General case}
  \label{sec:weak_Poincare_general_case}

We now state a general and abstract theorem, which will be used to deduce convergence from \eqref{eq:abstract_wpi}. We follow the approach of~\cite{Andrieu2022}, and present a series of analogous definitions and lemmas.

\medskip

\begin{definition}
  Consider a function~$\beta$ appearing in a weak Poincar\'e inequality~\eqref{eq:WPI} or $\beta=\beta_{\mathrm{kin}}$ appearing in a weak dissipation inequality \eqref{eq:abstract_wpi}, and let $K:[0,\infty)\to [0,\infty)$ be given by~$K(0)=0$ and, for $u>0$,
  \begin{equation*}
    K(u) := u \,\beta(u^{-1}).
  \end{equation*}
  The convex conjugate (or Legendre transform) of this function, $K^*:[0,\infty)\to [0,\infty]$, is
    \begin{equation*}
      K^*(w) := \sup_{u\geq 0} \{uw-K(u)\}.
    \end{equation*}
    Finally, define $F_a:(0,a)\to [0,\infty)$ as
      \begin{equation*}
        F_a(z) := \int_z^a \frac{\dif w}{K^*(w)},
      \end{equation*}
      and introduce the constant
      \begin{equation}\label{eq:cala}
        \mathfrak{a}:= \sup \left\{ \frac{\|f\|^2_{\rmL^2(U_\tau\otimes\Theta)}}{\Phi(f)} \ : \ f\in \mathrm{L}^2(U_\tau\otimes\Theta), \int_{[0,\tau]\times\R^d\times\R^d} f \dif U_\tau\dif\Theta=0 \right\}\le \frac14,
      \end{equation}
      the latter bound following by Popoviciu's inequality (see, for instance, \cite{Bhatia2000}).
      \label{def:WPI_bits}
\end{definition}

\medskip

The following lemma gathers gathers some properties of the function introduced in the previous definition (see Lemma~1 in~\cite[Supplementary Material]{Andrieu2022} and~\cite[Lemma~7]{Andrieu2022}).

\medskip

\begin{lemma}
  It holds~$K^*(0)=0$ and~$K^*(w)>0$ for~$w>0$. The function~$K^*$ is convex, continuous and strictly increasing on its domain. For $w\in[0,\mathfrak a]$, it holds~$K^*(w)\le w$; moreover, the function~$w\mapsto w^{-1}K^*(w)$ is increasing.

  The function $F_\mathfrak{a}$ is well-defined, convex, continuous and decreasing, with $\lim_{z\downarrow 0}F_\mathfrak{a}(z)=\infty$, and possesses a well-defined decreasing inverse function $F^{-1}_\mathfrak{a}:(0,\infty)\to (0,\mathfrak a)$ with $F^{-1}_\mathfrak{a}(z)\to 0$ as $z\to\infty$.
  \label{lemma:K_prop}
\end{lemma}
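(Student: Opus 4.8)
The plan is to establish each assertion of Lemma~\ref{lemma:K_prop} by tracing through the definitions, borrowing the key convexity facts from the standard theory of the Legendre transform. First I would record the basic structural facts about $\beta$: in a weak (dissipation) inequality, $\beta$ is nonnegative, nonincreasing, and $\beta(s) \to 0$ as $s \to \infty$. Consequently $K(u) = u\,\beta(u^{-1})$ satisfies $K(0)=0$ (by the convention), $K(u) \geq 0$, and $K(u)/u = \beta(u^{-1}) \to \beta(0^+)$ as $u \to 0$ while $K(u)/u \to 0$ as $u \to \infty$. The function $K^*(w) = \sup_{u \geq 0}\{uw - K(u)\}$ is, as a supremum of affine functions of $w$, automatically convex and lower semicontinuous on $[0,\infty)$; taking $u=0$ in the supremum gives $K^*(w) \geq -K(0) = 0$. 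For $w > 0$, since $K(u)/u \to 0$ as $u \to \infty$ we can pick $u$ large enough that $uw - K(u) = u(w - K(u)/u) > 0$, so $K^*(w) > 0$. That $K^*$ is (strictly) increasing on its domain follows because $w \mapsto uw - K(u)$ is nondecreasing for each fixed $u \geq 0$, hence the supremum is nondecreasing; strict monotonicity on the interior of the domain is a standard consequence of convexity together with $K^*$ not being eventually constant (which would force $K$ to take the value $-\infty$ somewhere, impossible here). Continuity on the (relative) interior of the domain is the standard fact that a finite convex function on an interval is continuous there; at the left endpoint $w=0$ one checks $K^*(0) = 0 = \lim_{w \downarrow 0} K^*(w)$ using that $K^*$ is convex, nonnegative, and vanishes at $0$.

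Next I would prove the two quantitative estimates for $w \in [0,\mathfrak a]$. For the bound $K^*(w) \leq w$: by Definition~\ref{def:WPI_bits}, $\mathfrak a$ is the best constant such that $\|f\|^2_{\rmL^2(U_\tau \otimes \Theta)} \leq \mathfrak a\,\Phi(f)$ for mean-zero $f$; comparing with the weak dissipation inequality~\eqref{eq:abstract_wpi}, taking $s \to \infty$ (so $\mathcal D_\tau(t)$-term drops, using $\beta_{\mathrm{kin}}(s) \to 0$) shows that $\beta_{\mathrm{kin}}(s) \geq$ (something comparable to) the ratio, but more directly: the inequality $\mathcal H_\tau \leq s \mathcal D_\tau + \beta(s)\Phi$ for \emph{all} $s>0$, evaluated against the extremal-type functions, forces $\beta(s) \geq \mathfrak a$ for small $s$ — equivalently, $\beta(s) \geq \mathfrak a$ whenever... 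Actually the cleanest route, which I would follow, is the one in~\cite[Lemma~1, Supplementary Material]{Andrieu2022}: the inequality $K^*(w) \le w$ on $[0,\mathfrak a]$ is equivalent, by the definition of the Legendre transform, to $uw - K(u) \le w$ for all $u \ge 0$, i.e. $w(u-1) \le K(u) = u\beta(1/u)$ for all $u \ge 0$; for $u \le 1$ the left side is $\le 0$ and there is nothing to prove, while for $u > 1$ one needs $w \le \frac{u}{u-1}\beta(1/u)$, and since $w \le \mathfrak a$ it suffices that $\mathfrak a \le \frac{u}{u-1}\beta(1/u)$ for all $u>1$, which is precisely a reformulation of the fact that the weak inequality with constant pair $(s, \beta(s))$ is at least as strong as the crude bound $\mathcal H_\tau \le \mathfrak a\,\Phi$ — this is where $\mathfrak a \le 1/4$ (Popoviciu) and the admissibility of $\beta$ enter. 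The monotonicity of $w \mapsto w^{-1}K^*(w)$ on $(0,\mathfrak a]$ is then the standard fact that for a convex function vanishing at the origin, the difference quotient from $0$ is nondecreasing.

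Finally I would treat the function $F_{\mathfrak a}(z) = \int_z^{\mathfrak a} \frac{\dif w}{K^*(w)}$. Since $K^*$ is continuous and strictly positive on $(0,\mathfrak a]$, the integrand is continuous and positive there, so $F_{\mathfrak a}$ is well-defined, continuously differentiable on $(0,\mathfrak a)$ with $F_{\mathfrak a}'(z) = -1/K^*(z) < 0$, hence strictly decreasing; its convexity follows from $F_{\mathfrak a}''(z) = (K^*)'(z)/K^*(z)^2 \ge 0$, using that $K^*$ is (nondecreasing, and where differentiable) increasing — or, to avoid differentiability issues, from the fact that $z \mapsto -1/K^*(z)$ is nondecreasing because $K^*$ is nondecreasing, so $F_{\mathfrak a}$ is an integral of a nondecreasing function, hence convex. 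The blow-up $\lim_{z \downarrow 0} F_{\mathfrak a}(z) = \infty$ follows from $K^*(w) \le w$ near $0$: indeed $\int_z^{\mathfrak a} \frac{\dif w}{K^*(w)} \ge \int_z^{\mathfrak a} \frac{\dif w}{w} = \log(\mathfrak a/z) \to \infty$ as $z \downarrow 0$. Being continuous, strictly decreasing, with range $(0,\infty)$ (value $0$ at $z=\mathfrak a$, $+\infty$ at $0^+$), $F_{\mathfrak a}$ admits a continuous, strictly decreasing inverse $F_{\mathfrak a}^{-1}:(0,\infty) \to (0,\mathfrak a)$ with $F_{\mathfrak a}^{-1}(z) \to 0$ as $z \to \infty$. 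I expect the main obstacle to be the estimate $K^*(w) \le w$ for $w \in [0,\mathfrak a]$: one must correctly identify the role of the constant $\mathfrak a$ (as the optimal constant in the Popoviciu-type bound relating $\|\cdot\|_{\rmL^2}^2$ to $\Phi$) and verify the admissibility condition $\frac{u}{u-1}\beta(1/u) \ge \mathfrak a$ for $u>1$, which amounts to checking that the trivial weak inequality with $s \to 0^+$ gives $\beta(0^+) \ge \mathfrak a$ and interpolating; all the rest is routine convex analysis. Everything needed is already available from~\cite{Andrieu2022} and the excerpt, so I would simply cite~\cite[Lemma~1, Supplementary Material]{Andrieu2022} and~\cite[Lemma~7]{Andrieu2022} for the parts that transcribe verbatim, and spell out only the adaptation of the constant $\mathfrak a$ to the space-time-velocity setting.
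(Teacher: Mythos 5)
The paper does not actually prove Lemma~\ref{lemma:K_prop}: it imports it wholesale from \cite[Lemma~1 of the Supplementary Material and Lemma~7]{Andrieu2022}, so your decision to cite those results and only sketch the adaptation is exactly what the authors do. Your fleshed-out sketch is sound for all the routine convex-analysis parts ($K^*$ as a supremum of affine maps is convex, lsc, nonnegative, nondecreasing; $K^*(0)=0$; continuity at $0$ from convexity plus $K^*\geq 0=K^*(0)$; monotonicity of $w\mapsto w^{-1}K^*(w)$ as the difference quotient from the origin of a convex function vanishing there; and the whole discussion of $F_{\mathfrak a}$, including the logarithmic divergence $F_{\mathfrak a}(z)\geq\log(\mathfrak a/z)$ once $K^*(w)\leq w$ is in hand). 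One small slip: you have the two limits of $K(u)/u=\beta(u^{-1})$ interchanged --- it is $u\to 0$ that sends $u^{-1}\to\infty$ and hence $K(u)/u\to 0$, so the witness $u$ showing $K^*(w)>0$ for $w>0$ must be taken \emph{small} (so that $\beta(1/u)<w$), not large. The conclusion survives unchanged.

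The one genuinely non-routine step is $K^*(w)\leq w$ on $[0,\mathfrak a]$, and this is where your argument does not close. Your reduction is correct: the claim is equivalent to $\beta(s)\geq \mathfrak a(1-s)$ for $s\in(0,1)$. But the justification you offer --- ``the trivial weak inequality with $s\to 0^+$ gives $\beta(0^+)\geq\mathfrak a$ and interpolating'' --- does not deliver this: a decreasing $\beta$ with $\beta(0^+)\geq\mathfrak a$ can still drop below $\mathfrak a(1-s)$ on $(0,1)$ (e.g.\ $\beta(s)=\mathfrak a\,\rme^{-s/\varepsilon}$ with $\varepsilon$ small), and for such a $\beta$ one in fact gets $K^*(w)>w$ for some $w\leq\mathfrak a$. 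Moreover, even the starting point $\beta(0^+)\geq\mathfrak a$ is not formally forced here, because the weak dissipation inequality \eqref{eq:abstract_wpi} is only asserted along solutions of \eqref{VOU}, whereas $\mathfrak a$ in \eqref{eq:cala} is a supremum over \emph{all} mean-zero $f\in\rmL^2(U_\tau\otimes\Theta)$. So the pointwise lower bound on $\beta$ is really an additional normalization/admissibility property of the specific $\beta$'s used (it holds trivially for the paper's $\beta_{\mathrm{kin}}$, which equals $1\geq\mathfrak a$ for all $s$ below a threshold and is compared against $\mathfrak a\leq 1/4$), and it is precisely the content one must take from \cite{Andrieu2022} rather than re-derive by ``interpolation''. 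Since both you and the paper ultimately lean on that citation for this point, this is a gap in the self-contained portion of your write-up rather than a divergence from the paper, but as written the sentence justifying $K^*(w)\leq w$ is not a proof.
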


\medskip

\begin{lemma}
  \label{lemma:opt_WPI}
  Under the same hypotheses as Lemma~\ref{lem:weak_Poincare_solutions_VOU}, we have, for any non-zero initial condition~$h_0 \in \rmL^\infty$,
  \begin{equation}
    \frac{\calD(t)}{\Phi(h_0)}\geq K^*\left(\frac{\mathcal H_\tau(t)}{\Phi(h_0)}\right),
    \label{eq:opt_WPI}
  \end{equation}
  where~$K^*$ is defined as in Definition~\ref{def:WPI_bits} based on the function~$\beta_\mathrm{kin}$ from Lemma~\ref{lem:weak_Poincare_solutions_VOU}.
\end{lemma}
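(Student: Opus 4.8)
The plan is to convert the weak dissipation inequality~\eqref{eq:abstract_wpi} of Lemma~\ref{lem:weak_Poincare_solutions_VOU} into the claimed lower bound~\eqref{eq:opt_WPI} by optimizing over the free parameter~$s$. Fix~$t\geq 0$ and abbreviate~$\mathcal{H}=\mathcal{H}_\tau(t)$, $\mathcal{D}=\mathcal{D}_\tau(t)$, $\Phi=\Phi(h_0)$; we may assume $\Phi>0$ (otherwise $h_0$ is a.e.\ constant, hence zero by the mean-zero condition, and there is nothing to prove) and likewise~$\mathcal{H}>0$, so we may divide freely. Dividing~\eqref{eq:abstract_wpi} by~$\Phi$ gives, for every~$s>0$,
\[
\frac{\mathcal{H}}{\Phi} \le s\,\frac{\mathcal{D}}{\Phi} + \beta_{\mathrm{kin}}(s).
\]
Writing $u = 1/s$ and recalling $K(u) = u\,\beta_{\mathrm{kin}}(u^{-1})$, multiply through by~$u>0$ to obtain $u\,\mathcal{H}/\Phi \le \mathcal{D}/\Phi + K(u)$, that is, $u\,(\mathcal{H}/\Phi) - K(u) \le \mathcal{D}/\Phi$ for all $u>0$; since the left-hand side vanishes at $u=0$ and $K(0)=0$, the inequality also holds at $u=0$, and taking the supremum over $u\geq 0$ yields exactly
\[
K^*\!\left(\frac{\mathcal{H}}{\Phi}\right) = \sup_{u\ge 0}\left\{ u\,\frac{\mathcal{H}}{\Phi} - K(u)\right\} \le \frac{\mathcal{D}}{\Phi},
\]
which is~\eqref{eq:opt_WPI}.

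Two small points need care. First, one must check that the argument $\mathcal{H}/\Phi$ lies in the domain where $K^*$ is finite and the statement is meaningful; by~\eqref{eq:cala} and the definition of~$\Phi$ one has $\mathcal{H}_\tau(t)/\Phi(h_0) \le \mathfrak{a} \le 1/4$ for any mean-zero $h\in\rmL^2(U_\tau\otimes\Theta)$ (applied on the time window $[t,t+\tau]$ after the obvious translation), and Lemma~\ref{lemma:K_prop} guarantees $K^*$ is finite, continuous and with $K^*(w)\le w$ on $[0,\mathfrak{a}]$, so both sides of~\eqref{eq:opt_WPI} are well-behaved. Second, one should note that $\mathcal{H}_\tau(t)$ and $\mathcal{D}_\tau(t)$, defined by~\eqref{eq:calHt}--\eqref{eq:calDt}, are precisely $\|h\|^2_{\rmL^2(U_\tau\otimes\Theta)}$ and $2\gamma\|\nabla_v h\|^2_{\rmL^2(U_\tau\otimes\Theta)}$ for the solution restricted and shifted to $[t,t+\tau]$, so Lemma~\ref{lem:weak_Poincare_solutions_VOU} applies directly (the mean-zero and $\rmL^\infty$ hypotheses are preserved along the flow by mass conservation and the maximum principle~\eqref{eq:Linftybd}).

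There is essentially no obstacle here: the result is a clean Legendre-transform reformulation of~\eqref{eq:abstract_wpi}, and the only thing to be careful about is the harmless substitution $u=1/s$ together with the inclusion of the endpoint $u=0$ in the supremum, which is legitimate since both $u\mapsto u\,\mathcal{H}/\Phi$ and $K$ vanish there. The genuine work has already been done in establishing Lemma~\ref{lem:weak_Poincare_solutions_VOU}; this lemma merely packages it in the form required to run the Bihari--LaSalle comparison argument leading to Theorem~\ref{thm:weakPIconv}.
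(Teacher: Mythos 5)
Your proof is correct and follows essentially the same route as the paper: divide the weak dissipation inequality by $\Phi(h_0)$, substitute $u=s^{-1}$, and take the supremum over $u\geq 0$ to recognize the Legendre transform $K^*$. The additional remarks on the endpoint $u=0$ and the well-posedness of $\mathcal{H}_\tau(t)/\Phi(h_0)\le\mathfrak{a}$ are harmless elaborations of details the paper leaves implicit.
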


\begin{proof}
  This proof is inspired by the discrete-time argument in the proof of~\cite[Theorem~8]{Andrieu2022}. We start by reformulating~\eqref{eq:abstract_wpi} as 
  \begin{equation*}
    \frac{\calD(t)}{\Phi(h_0)} \geq \frac{\calH(t)}{s\,\Phi(h_0)}-\frac{\beta_\mathrm{kin}(s)}{s}.
  \end{equation*}
  Rewriting the inequality in terms of~$K$, and reparameterizing $u:=s^{-1}$, we obtain
  \begin{equation*}
    \frac{\calD(t)}{\Phi(h_0)} \geq u\, \frac{\calH(t)}{\Phi(h_0)} -K(u).
  \end{equation*}
  Since $u$ is arbitrary, we can take a supremum over~$u\geq 0$ on the right hand side of the previous inequality to conclude that~\eqref{eq:opt_WPI} holds.
\end{proof}

Lemma~\ref{lemma:opt_WPI} allows us to use a Bihari--LaSalle argument to deduce (potentially subgeometric) convergence of the semigroup.

\medskip

\begin{theorem}
  Under the assumptions of Lemma~\ref{lem:weak_Poincare_solutions_VOU}, it holds
  \begin{equation*}
    \mathcal{H}_\tau(t) \le \Phi(h_0)  F_\mathfrak{a}^{-1}(t).
  \end{equation*}
  \label{thm:conv}
\end{theorem}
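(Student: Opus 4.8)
The plan is to close the argument with a Bihari--LaSalle (nonlinear Gr\"onwall) estimate, taking Lemma~\ref{lemma:opt_WPI} as the key pointwise inequality and the energy identity~\eqref{eefull} as the evolution law. First I would dispose of trivial situations: if $\Phi(h_0)=0$ then $h_0$ is a constant with vanishing average, hence $h_0\equiv 0$ and both sides of the claimed bound are zero; so I assume $\Phi(h_0)>0$ and introduce the normalized quantity $y(t):=\mathcal{H}_\tau(t)/\Phi(h_0)$. I would then record three properties of $y$. It is $C^1$ with $y'(t)=-\mathcal{D}_\tau(t)/\Phi(h_0)\le 0$: this is exactly~\eqref{eefull}, the right-hand side being continuous in $t$ because $s\mapsto\|\nabla_v h(s,\cdot,\cdot)\|_{\rmL^2(\Theta)}^2$ is locally integrable along time by~\eqref{ee}. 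It takes values in $(0,\mathfrak{a}]$ wherever it is positive: applying the supremum~\eqref{eq:cala} defining $\mathfrak{a}$ to the time-shifted map $(r,x,v)\mapsto h(t+r,x,v)$ on $[0,\tau]$ --- which has zero $U_\tau\otimes\Theta$-average since each slice $h(s,\cdot,\cdot)$ does --- together with the maximum principle~\eqref{eq:Linftybd}, which gives $\sup_{s\in[t,t+\tau]}\|h(s,\cdot,\cdot)\|^2_\osc\le\Phi(h_0)$, yields $\mathcal{H}_\tau(t)\le\mathfrak{a}\,\Phi(h_0)$. And it is strictly decreasing while positive, since $\mathcal{D}_\tau(t)/\Phi(h_0)\ge K^*(y(t))>0$ when $y(t)\in(0,\mathfrak{a}]$ by Lemmas~\ref{lemma:opt_WPI} and~\ref{lemma:K_prop}. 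Consequently the set $\{t\ge 0:y(t)>0\}$ is an interval $[0,T^*)$ with $T^*\in(0,\infty]$, and on $[T^*,\infty)$ (if nonempty) $y\equiv 0$, so the claimed bound holds there trivially, $F_{\mathfrak{a}}^{-1}$ being positive.

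On $[0,T^*)$ the main step is immediate: combining Lemma~\ref{lemma:opt_WPI} with~\eqref{eefull} gives the autonomous differential inequality $y'(t)\le -K^*(y(t))$. Since $K^*$ is continuous, positive and increasing on $(0,\mathfrak{a}]$ (Lemma~\ref{lemma:K_prop}), I divide by $K^*(y(t))$ and recognize a total derivative: with $F_{\mathfrak{a}}(z)=\int_z^{\mathfrak{a}}K^*(w)^{-1}\dif w$, so that $F_{\mathfrak{a}}'=-1/K^*$, one obtains
\[
\frac{\dif}{\dif t}\,F_{\mathfrak{a}}\bigl(y(t)\bigr)=-\frac{y'(t)}{K^*\bigl(y(t)\bigr)}\ge 1 .
\]
Integrating on $[0,t]$ and using $F_{\mathfrak{a}}(y(0))\ge 0$ (valid since $y(0)\in(0,\mathfrak{a}]$) yields $F_{\mathfrak{a}}(y(t))\ge t$ for all $t\in[0,T^*)$. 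Finally, $F_{\mathfrak{a}}$ is decreasing with decreasing inverse $F_{\mathfrak{a}}^{-1}$ (Lemma~\ref{lemma:K_prop}), so applying $F_{\mathfrak{a}}^{-1}$ reverses the inequality to $y(t)\le F_{\mathfrak{a}}^{-1}(t)$; multiplying through by $\Phi(h_0)$ gives $\mathcal{H}_\tau(t)\le\Phi(h_0)F_{\mathfrak{a}}^{-1}(t)$ on $[0,T^*)$, hence --- together with the trivial range $[T^*,\infty)$ --- for all $t\ge 0$.

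Since Lemma~\ref{lemma:opt_WPI} already encapsulates the analytic content (the Legendre-transform passage from the family of weak dissipation inequalities~\eqref{eq:abstract_wpi} to a single quantitative lower bound on the dissipation), I expect the proof itself to be short, and the only delicate points to be bookkeeping: verifying that $y$ never leaves $(0,\mathfrak{a}]$ so that the whole apparatus of Definition~\ref{def:WPI_bits} and Lemma~\ref{lemma:K_prop} is legitimate, handling the possible finite extinction time $T^*$ and the blow-up of $F_{\mathfrak{a}}$ near $0$, and invoking the regularity of $\mathcal{H}_\tau$ built into~\eqref{eefull}. This is the continuous-time counterpart of the discrete-time Bihari--LaSalle step in~\cite[Theorem~8]{Andrieu2022}.
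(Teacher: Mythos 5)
Your proposal is correct and follows essentially the same route as the paper: combine the energy identity~\eqref{eefull} with Lemma~\ref{lemma:opt_WPI} to get the autonomous differential inequality $y'\le -K^*(y)$ for $y=\mathcal{H}_\tau/\Phi(h_0)$, then integrate via Bihari--LaSalle using $F_{\mathfrak a}'=-1/K^*$ and the monotonicity of $F_{\mathfrak a}^{-1}$. Your extra bookkeeping (the case $\Phi(h_0)=0$, the verification via~\eqref{eq:cala} and the maximum principle that $y$ stays in $(0,\mathfrak a]$, and the possible extinction time) is sound and merely makes explicit what the paper's terser proof leaves implicit.
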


Theorem~\ref{thm:weakPIconv} is then a direct corollary of Theorem~\ref{thm:conv} since, for any $t\geq \tau$, it holds~$\|h(t)\|_{\rmL^2(\Theta)}^2 \le \mathcal{H}_\tau (t-\tau)$ in view of~\eqref{ee}: the rate in Theorem~\ref{thm:weakPIconv} is given by
\begin{equation}
    \mathsf F(t)=    F_{\mathfrak a}^{-1}(t-\tau)\mathbf{1}_{\{t > \tau\}} + \tau \mathbf{1}_{\{t \leq \tau\}}.
    \label{eq:sfF_def}
\end{equation}

\begin{proof}
  By~\eqref{eefull} and~\eqref{eq:opt_WPI},
  \begin{equation*}
    \frac{\dif }{\dif t}\mathcal{H}_\tau(t)
    = -\mathcal D_\tau(t)\le - \Phi(h_0)  K^*\left(\frac{\calH(t)}{\Phi(h_0)}\right),
  \end{equation*}
  and so
  \[
  \frac{\dif }{\dif t}\left( \frac{\calH(t)}{\Phi(h_0)} \right) \leq -K^*\left(\frac{\calH(t)}{\Phi(h_0)}\right).
  \]
  Thus, from a Bihari--LaSalle argument, we obtain that
  \[
  F_\mathfrak{a}\left( \frac{\calH(t)}{\Phi(h_0)} \right) - F_\mathfrak{a}\left( \frac{\calH(0)}{\Phi(h_0)} \right) \geq t, 
  \]
  so that 
  \begin{equation*}
    \frac{\calH(t)}{\Phi(h_0)}\le F_\mathfrak{a}^{-1}\left(  F_\mathfrak{a}\left( \frac{\|h_0\|^2_{\rmL^2(\Theta)}}{\Phi(h_0)}\right) +t\right) \le F_\mathfrak{a}^{-1}\left( t\right),
  \end{equation*}
  where we used that~$F_\mathfrak{a}^{-1}$ is decreasing.
\end{proof}

\subsection{Specific cases}
\label{subsec:specific_cases}

We specify in this section the general convergence bound of Theorem~\ref{thm:conv} to concrete situations of interest. This allows us to come up with Table~\ref{tab:weakpirates}. For each situation, we make precise what the functions~$K^*$ and~$F_\mathfrak{a}$ are, relying on various results of~\cite{Andrieu2022}. We start by considering in Section~\ref{sec:strong_vel_confinement} the case when the velocity distribution~$\nu$ satisfies a Poincar\'e inequality, in which case the convergence rate is dictated by the asymptotic behavior of the potential~$\phi$, then turn in Section~\ref{sec:weak_velocity_confinement} to situations when the velocity distribution also has heavy tails, in which case the convergence rate is obtained by a chaining of weak Poincar\'e inequalities.

\subsubsection{Strong velocity confinement}
\label{sec:strong_vel_confinement}

We assume in this section that~$\nu$ satisfies the Poincar\'e inequality~\eqref{eq:pinu}. This includes the case when $\nu$ is Gaussian. We distinguish two cases: $\phi(x) = \langle x\rangle^\alpha$ for $\alpha \in (0,1)$ (sublinear case) or $\phi(x) = (d+p)\log\langle x\rangle$ for some $p>0$ (logarithmic case).

\paragraph{Sublinear potential.} The key abstract result to obtain the convergence rate is~\cite[Lemma~15]{Andrieu2022}.

\begin{lemma}
  Suppose that~$\beta(s)=\eta_0\exp(-\eta_1 s^{\eta_2})$ for some $\eta_0,\eta_1,\eta_2>0$. Then, there exists~$c>0$ such that, for $w>0$ sufficiently small,
  \begin{equation}
    \label{eq:K_star_sublinear}
    K^*(w)\geq c w \left(\log \frac{1}{w}\right)^{-1/\eta_2}.
  \end{equation} 
  The resulting convergence rate can be bounded as
  \begin{equation*}
    F_\mathfrak{a}^{-1} (t) \le C \exp\left( -C' t^{\frac{\eta_2}{1+\eta_2}} \right),
  \end{equation*}
  for some constants~$C,C'>0$.
  \label{lem:sub_exp_beta}
\end{lemma}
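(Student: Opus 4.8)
\textbf{Proof plan for Lemma~\ref{lem:sub_exp_beta}.}

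The plan is to unfold the definitions of Section~\ref{sec:weak_Poincare_general_case} for the specific stretched-exponential~$\beta$, first obtaining the lower bound~\eqref{eq:K_star_sublinear} on~$K^*$, and then feeding this bound into the defining integral of~$F_{\mathfrak a}$ to extract the claimed stretched-exponential decay of~$F_{\mathfrak a}^{-1}$. For the first part, I would compute $K(u) = u\beta(u^{-1}) = \eta_0 u\exp(-\eta_1 u^{-\eta_2})$ and then estimate the Legendre transform $K^*(w) = \sup_{u \ge 0}\{uw - K(u)\}$ from below by a clever, sub-optimal choice of~$u$ depending on~$w$. The natural guess is $u = u(w)$ of the form $u \sim (\eta_1^{-1}\log(1/w))^{1/\eta_2}$ (up to logarithmic corrections), so that the penalty term $K(u) = \eta_0 u w^{\eta_1 u^{-\eta_2}/\log(1/w)}$ is, for~$w$ small, a negligible fraction of~$uw$; plugging this in gives $K^*(w) \ge uw - K(u) \ge \tfrac12 uw \ge c\,w(\log(1/w))^{-1/\eta_2}$ for~$w$ below some threshold. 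This is essentially the content of~\cite[Lemma~15]{Andrieu2022}, which I would cite for the precise bookkeeping; the only point requiring care is that we need the bound uniformly for all small~$w$, and that the constant~$c$ is harmless.

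For the second part, I would combine~\eqref{eq:K_star_sublinear} with the monotonicity facts from Lemma~\ref{lemma:K_prop}: near $z=0$ the integrand $1/K^*(w)$ is controlled by $C w^{-1}(\log(1/w))^{1/\eta_2}$, so
\[
F_{\mathfrak a}(z) = \int_z^{\mathfrak a} \frac{\dif w}{K^*(w)} \le C\int_z^{w_0} \frac{1}{w}\left(\log\frac1w\right)^{1/\eta_2}\dif w + C'
\]
for $z$ small, where $w_0$ is the threshold from the first part. The substitution $w = \rme^{-r}$ turns the integral into $C\int_{\log(1/w_0)}^{\log(1/z)} r^{1/\eta_2}\,\dif r \le C''\,(\log(1/z))^{1+1/\eta_2}$, so that $F_{\mathfrak a}(z) \le C''(\log(1/z))^{(1+\eta_2)/\eta_2} + C'$. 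Inverting this relation: if $t = F_{\mathfrak a}(z)$ then $\log(1/z) \ge C'''\, t^{\eta_2/(1+\eta_2)}$ for $t$ large, i.e. $z = F_{\mathfrak a}^{-1}(t) \le \exp(-C'''\, t^{\eta_2/(1+\eta_2)})$, which is the claimed bound (for~$t$ large, and hence, after adjusting~$C$, for all~$t \ge 0$ since $F_{\mathfrak a}^{-1}$ is bounded by~$\mathfrak a$). I would also remark that~$\beta$ must be identified with~$\beta_{\mathrm{kin}}$ here, and that in the case~$\nu$ satisfies a Poincar\'e inequality with~$\phi(x) = \langle x\rangle^\alpha$, $\alpha \in (0,1)$, Table~\ref{table:rates} gives~$\beta_x(s_1) = \exp(-cs_1^{\alpha/[2(1-\alpha)]})$, so that~$\eta_2 = \alpha/[2(1-\alpha)]$ and $\eta_2/(1+\eta_2) = \alpha/(2-\alpha)$, recovering the rate listed in Table~\ref{tab:weakpirates}; however strictly this identification belongs to the surrounding text rather than the lemma's proof.

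The main obstacle I anticipate is the clean, quantitative control of the Legendre transform from below — in particular making sure the penalty term $K(u(w))$ is genuinely subdominant to $u(w)w$ for the chosen $u(w)$, uniformly over a full left-neighbourhood of $w = 0$, rather than just asymptotically. This requires choosing $u(w)$ slightly away from the exact maximizer (e.g. with a tweaked constant inside the logarithm, or an extra $(\log\log)$ factor) to kill the exponential penalty, and then checking that this perturbation only costs a constant factor in the final bound. The integral estimate for $F_{\mathfrak a}$ and its inversion are then routine. Since~\cite[Lemma~15]{Andrieu2022} already carries out exactly this analysis, the honest approach is to invoke it directly for~\eqref{eq:K_star_sublinear} and only spell out the $F_{\mathfrak a}$ computation and inversion, which is short.
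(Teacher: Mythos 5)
Your overall route is the same as the paper's: the paper gives no proof of this lemma at all, it simply attributes the statement to \cite[Lemma~15]{Andrieu2022}, so citing that lemma and (optionally) sketching the Legendre-transform computation plus the integration/inversion of $F_{\mathfrak a}$ is exactly the intended content. Your second half (bounding $F_{\mathfrak a}(z)\lesssim (\log(1/z))^{1+1/\eta_2}$ via the substitution $w=\rme^{-r}$ and inverting) is correct and complete.

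There is, however, a concrete slip in your sketch of the first half: the test point $u(w)\sim\bigl(\eta_1^{-1}\log(1/w)\bigr)^{1/\eta_2}$ has the exponent's sign wrong. With $K(u)=\eta_0\,u\,\exp(-\eta_1 u^{-\eta_2})$, your choice sends $u\to\infty$ as $w\downarrow 0$, so $\eta_1 u^{-\eta_2}\to 0$ and $K(u)\approx\eta_0 u$, which \emph{dominates} $uw$ rather than being a negligible fraction of it; the step $uw-K(u)\geq\tfrac12 uw$ fails. The correct choice is the reciprocal scaling $u(w)=\bigl(\eta_1/(2\log(\eta_0/w))\bigr)^{1/\eta_2}\to 0$, for which $\exp(-\eta_1 u^{-\eta_2})=(w/\eta_0)^2$, hence $K(u)=u w^2/\eta_0\leq\tfrac12 uw$ for $w\leq\eta_0/2$, and $K^*(w)\geq\tfrac12 u(w)\,w = c\,w(\log(1/w))^{-1/\eta_2}$. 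Note that your own final bound, with its \emph{negative} power of the logarithm, already tells you that the near-maximizer must tend to $0$, not to $\infty$; so this is a one-line fix, but as written the key inequality would not go through.
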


\medskip

\begin{example}\label{ex1}
  Consider the case when $\phi(x) = \langle x\rangle^\alpha$ for $\alpha \in (0,1)$. Since $W = \bangle{x}^{1-\alpha}$, the condition~$u \le W(x)$ is equivalent to~$\bangle{x} \geq u^{\frac{1}{1-\alpha}}$, and hence, using a change of variables,
  \begin{equation}
    \mu(u\le W) = \int_{\bangle{x} \geq u^{\frac{1}{1-\alpha}} } \exp(-\bangle{x}^\alpha) \dif x \lesssim \int_{u^\frac{1}{1-\alpha}}^\infty r^{d-1} \exp(-r^\alpha) \dif r \lesssim \exp(-cu^{\frac{\alpha}{1-\alpha}}),
    \label{eq:x_subexp}
  \end{equation}
  where the inequalities omit constants which do not depend on $u$. Thus, referring to~\eqref{eq:beta_pi_mod}, we see that the weak dissipation inequality~\eqref{eq:abstract_wpi}  holds in this case with
  \begin{equation*}
    \beta_\mathrm{kin}(s) = \eta_0 \exp\left (-\eta_1 s^{\frac{\alpha}{2(1-\alpha)}}\right).
  \end{equation*}
  Lemma~\ref{lem:sub_exp_beta} then leads to the following convergence bound, for some $C,C'>0$:
  \begin{equation*}
    \mathcal{H}_\tau(t) \le C \Phi(h_0) \exp\left( -C't^{\frac{\alpha}{2-\alpha}} \right).
  \end{equation*}
  Note that this offers an improvement upon the corresponding convergence rate obtained in~\cite[Example~1.4(c)]{rockner2001weak}, where the power in the stretched exponential is~$\alpha/(4-3\alpha)$, which is smaller than~$\alpha/(2-\alpha)$ since~$2-\alpha < 4-3\alpha$.
\end{example} 

\paragraph{Logarithmic potential.} The key abstract result to obtain the convergence rate is~\cite[Lemma~14]{Andrieu2022}.

\begin{lemma}
  Suppose that~$\beta(s)=\eta_0 s^{-\eta_1}$ for some $\eta_0,\eta_1>0$. Then,
  \[
  K^*(w)= \frac{\eta_0 \eta_1}{(\eta_0(1+\eta_1))^{1+\eta_1^{-1}}}w^{1+\eta_1^{-1}},
  \]
  and the resulting convergence rate can be bounded as
  \begin{equation*}
    F_\mathfrak{a}^{-1} (t) \le \eta_0 (1+\eta_1)^{1+\eta_1}\, t^{-\eta_1}.
  \end{equation*}
  \label{lem:poly_beta}
\end{lemma}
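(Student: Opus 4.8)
The plan is to obtain $K^*$ by an explicit one-variable Legendre computation, then integrate $1/K^*$ in closed form and invert it, following the template of~\cite{Andrieu2022}.

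First I would observe that with $\beta(s)=\eta_0 s^{-\eta_1}$ the auxiliary function of Definition~\ref{def:WPI_bits} is $K(u)=u\,\beta(u^{-1})=\eta_0 u^{1+\eta_1}$ for every $u\ge 0$. Since the exponent $1+\eta_1$ exceeds $1$, the map $u\mapsto uw-\eta_0 u^{1+\eta_1}$ is strictly concave on $[0,\infty)$ and tends to $-\infty$ as $u\to\infty$, so for $w>0$ its supremum is attained at the unique stationary point $u_\star=\bigl(w/(\eta_0(1+\eta_1))\bigr)^{1/\eta_1}$. Substituting $u_\star$ back in and using $w-\eta_0 u_\star^{\eta_1}=w\,\eta_1/(1+\eta_1)$ gives $K^*(w)=\tfrac{\eta_1}{1+\eta_1}\,w\,u_\star$, which rearranges to the claimed identity $K^*(w)=\dfrac{\eta_0\eta_1}{(\eta_0(1+\eta_1))^{1+\eta_1^{-1}}}\,w^{1+\eta_1^{-1}}$ (and $K^*(0)=0$ is immediate).

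Next I would substitute this power law into $F_\mathfrak{a}(z)=\int_z^{\mathfrak{a}}K^*(w)^{-1}\dif w$. Writing $K^*(w)=\kappa\,w^{p}$ with $p=1+\eta_1^{-1}>1$ and $\kappa=\eta_0\eta_1/(\eta_0(1+\eta_1))^{1+\eta_1^{-1}}$, the antiderivative of $w^{-p}$ is $-\eta_1\,w^{-1/\eta_1}$, so $F_\mathfrak{a}(z)=\tfrac{\eta_1}{\kappa}\bigl(z^{-1/\eta_1}-\mathfrak{a}^{-1/\eta_1}\bigr)$. Solving $t=F_\mathfrak{a}(z)$ for $z$ yields $z^{-1/\eta_1}=\tfrac{\kappa}{\eta_1}t+\mathfrak{a}^{-1/\eta_1}\ge \tfrac{\kappa}{\eta_1}t$, hence $F_\mathfrak{a}^{-1}(t)=z\le (\eta_1/\kappa)^{\eta_1}t^{-\eta_1}$; the elementary identity $(\eta_1/\kappa)^{\eta_1}=\bigl(\eta_0^{1/\eta_1}(1+\eta_1)^{1+1/\eta_1}\bigr)^{\eta_1}=\eta_0(1+\eta_1)^{1+\eta_1}$ then produces the stated bound.

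I do not anticipate a genuine obstacle here, since the lemma amounts to a Legendre transform together with an elementary integral. The only two points that merit a line of care are that the supremum defining $K^*$ is attained at an interior critical point rather than at $u=0$ or $u=\infty$ (which is exactly why the strict concavity and the super-linear growth of $K$ are used), and that dropping the nonnegative term $\mathfrak{a}^{-1/\eta_1}$ when inverting $F_\mathfrak{a}$ only weakens the upper bound on $F_\mathfrak{a}^{-1}$, so it is harmless. The structural properties of $F_\mathfrak{a}$ (continuity, strict monotonicity, existence of a decreasing inverse) that legitimize these manipulations are already provided by Lemma~\ref{lemma:K_prop}.
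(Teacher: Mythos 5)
Your computation is correct and is exactly the direct Legendre-transform calculation that the paper delegates to the cited result \cite[Lemma~14]{Andrieu2022} without reproducing it: $K(u)=\eta_0 u^{1+\eta_1}$, the interior stationary point $u_\star=(w/(\eta_0(1+\eta_1)))^{1/\eta_1}$ gives the stated power law for $K^*$, and integrating $w^{-1-1/\eta_1}$ and dropping the nonnegative boundary term $\mathfrak{a}^{-1/\eta_1}$ yields the bound on $F_\mathfrak{a}^{-1}$ with the constant $\eta_0(1+\eta_1)^{1+\eta_1}$. No gaps; the two points you flag (attainment of the supremum in the interior, harmlessness of discarding the boundary term) are precisely the ones worth recording.
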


\begin{example}
  \label{ex:poly_gaus}
  Consider the case when $\phi(x) = (d+p)\log\langle x\rangle$ for some $p>0$. Since~$W(x)=\langle x\rangle$, 
  \begin{equation*}
    \mu(u\le W) \lesssim \int_{\langle x\rangle \ge u} \langle x \rangle^{-(d+p)}\lesssim u^{-p}.
  \end{equation*}
  Referring again to \eqref{eq:beta_pi_mod}, we see that the weak dissipation inequality~\eqref{eq:abstract_wpi} holds with 
  \begin{equation*}
    \beta_\mathrm{kin}(s) = \eta_0 s^{-p/2}.
  \end{equation*}
  Applying Lemma~\ref{lem:poly_beta}, we conclude to the convergence bound
  \begin{equation*}
    \mathcal{H}_\tau(t) \le C \Phi(h_0)\, t^{-p/2}.
  \end{equation*}
\end{example}

Let us conclude this section by commenting that the scalings of~$\beta_\mathrm{kin}(s)$ in both Examples~\ref{ex1} and~\ref{ex:poly_gaus} match these of weak Poincar\'e inequalities for~$\mu$ obtained in respectively~\cite[Propositions~4.11 and~4.9]{cattiaux2010functional}, which improves upon \cite{rockner2001weak}. This can be seen by applying the results of~\cite{cattiaux2010functional} with~$V(x) = \langle x\rangle$ and weak Poincar\'e inequalities in the form
\[
\int_{\R^d} (f - \mu(f))^2 \dif \mu \leq \beta^{-1}(s) \int_{\R^d}|\nabla_x f|^2 \dif \mu + s \Phi(f).
\]

\subsubsection{Weak velocity confinement}
\label{sec:weak_velocity_confinement}
We consider in this section cases when the velocity distribution is weakly confining and does not satisfy a Poincar\'e inequality. These examples are largely inspired by the previous work~\cite{grothaus2019weak}. A key technical tool in this context is chaining (\emph{i.e.} composition) of weak Poincar\'e inequalities; see the proof of~\cite[Theorem~33]{Andrieu2022}.

\medskip

\begin{proposition}
  Given a weak Poincar\'e inequality with $\beta$ of the form 
  \begin{equation*}
    \beta(s)=\inf\left\{s_1 \beta_2(s_2)+\beta_1 (s_1) \, \middle| \, s_1>0, \, s_2>0, \, s_1s_2 = s \right\},
    \end{equation*}
  where each function~$\beta_1,\beta_2$ has a corresponding function~$K^*_1, K^*_2$ respectively (as in Definition~\ref{def:WPI_bits}), the resulting function~$K^*$ for $\beta$ is
  \begin{equation*}
    K^* = K_2^* \circ K_1^*.
  \end{equation*}
  \label{prop:chaining}
\end{proposition}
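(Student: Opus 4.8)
The plan is to unfold the definitions of $K^*_1$, $K^*_2$ and $K^*$ directly, and show that the Legendre transform of the ``chained'' function $K(u) = u\,\beta(u^{-1})$ equals the composition $K^*_2 \circ K^*_1$. First I would rewrite the infimum defining $\beta$ in terms of $K$: since $\beta(s) = s^{-1} K(s^{-1})^{-1}$ wait --- more carefully, recall $K_i(u) = u\,\beta_i(u^{-1})$, so $\beta_i(s) = s^{-1} K_i(s^{-1})$. Substituting $u_1 = s_1^{-1}$, $u_2 = s_2^{-1}$ and $u = s^{-1} = u_1 u_2$ into the definition of $\beta$, the term $s_1 \beta_2(s_2) + \beta_1(s_1)$ becomes $u_1^{-1}\big(u_2 K_2(u_2) \big) + u_1 K_1(u_1)$; multiplying the whole identity $\beta(s) = s^{-1}K(s^{-1}) = u K(u)$ appropriately, one arrives at an identity of the form
\[
K(u) = \inf\left\{ u_2 K_2(u_2) + u_1^{-1}\, u\, u_1 K_1(u_1) \ \middle|\ u_1 u_2 = u \right\}
\]
after the dust settles; I would take care to get the algebra exactly right, since this is the load-bearing computation. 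The cleaner route is probably to work throughout with $K$ rather than $\beta$: one checks that the stated form of $\beta$ is equivalent to $K(u) = \inf_{u_1 u_2 = u}\{\, u_1 K_2(u_2) + u_1 K_1(u_1)/u_1 \,\}$, i.e. that $K$ is (a version of) the ``infimal composition'' of $K_1$ and $K_2$.

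Next I would invoke the standard fact from convex analysis that the Legendre transform turns infimal convolution / this kind of composition into ordinary composition: for nonnegative convex increasing functions vanishing at $0$, the conjugate of $u \mapsto \inf_{u_1 u_2 = u} u_1 K_2(u_2)$ type expressions is the composition of conjugates. Concretely, using $K^*(w) = \sup_{u \ge 0}\{uw - K(u)\}$ and plugging in the infimum form of $K$, I would swap the $\sup_u$ with the structure $u = u_1 u_2$, reorganize as a nested supremum over $u_1$ and $u_2$, recognize the inner supremum as $K_2^*$ evaluated at an argument proportional to $w$, and the outer one as $K_1^*$ applied to that. The monotonicity and convexity properties collected in Lemma~\ref{lemma:K_prop} (each $K_i^*$ convex, increasing, with $K_i^*(0)=0$) are what make these manipulations legitimate and guarantee the suprema are attained or at least that the identity holds on the relevant domains.

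The main obstacle I expect is bookkeeping: correctly translating between the $\beta$-language in which the hypothesis is stated (with the $s_1 \beta_2(s_2) + \beta_1(s_1)$, $s_1 s_2 = s$ structure) and the $K$-language in which the conclusion is cleanest, keeping track of which variable is inverted where, and handling the boundary/domain issues (the functions are only defined and well-behaved on $[0,\infty)$, and $F_\mathfrak{a}$ only on $(0,\mathfrak{a})$, so one must check the identity $K^* = K_2^* \circ K_1^*$ holds at least for small arguments, which is all that is needed for the convergence-rate applications in Examples~\ref{ex1}--\ref{exam:vel_poly_x_poly}). I would also double-check the claimed form against the concrete chained $\beta$ produced in Lemma~\ref{lem:weak_Poincare_solutions_VOU}, namely $\overline{\beta}(s) = \inf\{s_1 \beta_v(s_2 - c) + \beta_x(s_1) : s_1 s_2 = s\}$, to make sure the shift by $c$ does not spoil the composition structure (it only affects $K_2^*$ by a controlled modification, since $\beta_v(\cdot - c) \le \beta_v(\cdot - c')$ type monotonicity bounds absorb it). Modulo these care points, the proof is a short and essentially formal consequence of Fenchel duality applied to the infimal-composition representation of $K$.
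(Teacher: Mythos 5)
Your overall plan is the right one (and, as far as one can compare, the only one: the paper offers no proof of this proposition, deferring entirely to the proof of Theorem~33 in the cited work of Andrieu et al., so the standard Fenchel-duality computation you describe is exactly what is intended). However, the algebra you wrote down is not just unpolished, it is wrong in two places that you would need to fix before the argument closes. First, the conversion between $\beta_i$ and $K_i$ is inverted: from $K_i(u)=u\,\beta_i(u^{-1})$ one gets $\beta_i(s)=s\,K_i(s^{-1})$, not $s^{-1}K_i(s^{-1})$. Carrying the correct substitution $u_1=s_1^{-1}$, $u_2=s_2^{-1}$, $u=u_1u_2$ through $K(u)=u\,\beta(u^{-1})$ gives
\begin{equation*}
  K(u) \;=\; \inf\Bigl\{\,K_2(u_2) + u_2\,K_1(u_1)\ \Bigm|\ u_1>0,\ u_2>0,\ u_1u_2=u\,\Bigr\},
\end{equation*}
which differs from the identity displayed in your sketch. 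Second, your description of the nested suprema has the roles of $K_1^*$ and $K_2^*$ swapped: writing
\begin{equation*}
  K^*(w)=\sup_{u_1,u_2>0}\bigl\{u_1u_2w - K_2(u_2) - u_2K_1(u_1)\bigr\}
  =\sup_{u_2>0}\Bigl\{u_2\sup_{u_1>0}\bigl(u_1w-K_1(u_1)\bigr)-K_2(u_2)\Bigr\},
\end{equation*}
the \emph{inner} supremum (over $u_1$, legitimately pulled inside since $u_2>0$) is $K_1^*(w)$, and the \emph{outer} one is then $K_2^*$ evaluated at $K_1^*(w)$, yielding $K^*=K_2^*\circ K_1^*$ as claimed; your text has the inner supremum producing $K_2^*$ and the outer producing $K_1^*$, which would give the composition in the wrong order. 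Two smaller points: the identity holds globally on $[0,\infty)$ (one only needs $K_1^*(w)\ge 0$, guaranteed by Lemma~\ref{lemma:K_prop}, for the outer expression to make sense), so the domain worries about ``small arguments'' are unnecessary; and the shift by $c$ in $\overline{\beta}$ is handled separately in the paper by Lemma~\ref{lem:shiftbetas}, so you are right that it does not need to be absorbed into this proposition. With the two algebraic corrections above, your argument is complete.
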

We also note from~\eqref{eq:beta_form_mod} that we are also interested in handling $\beta$ functions of the form $\widetilde \beta(s)=\beta(s-c)$ for some fixed $c>0$. We prove in the lemma below this fixed shift does not have any asymptotic impact. 

\medskip

\begin{lemma}\label{lem:shiftbetas}
    For a fixed function~$\beta$ and a constant~$c>0$, define $\widetilde \beta(s)=\beta(s-c)$ for $s\ge c$, and $\widetilde \beta(s)=\beta(0)$ for $0\le s< c$. Then the corresponding function $\widetilde K^*$ for $\widetilde \beta$ satisfies the following lower bound for an appropriate constant $\widetilde c$:
    \begin{equation*}
        \forall u \in (0,1/8), \qquad \widetilde K^*(u) \ge \widetilde c K^*(u).
    \end{equation*}
\end{lemma}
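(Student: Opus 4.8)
The plan is to work directly with the definitions of $K$, $K^*$ and to track how the shift $s \mapsto s-c$ propagates through the Legendre transform. Recall $K(u) = u\beta(u^{-1})$, so the shifted version is $\widetilde K(u) = u\widetilde\beta(u^{-1}) = u\beta(u^{-1}-c)$ whenever $u^{-1} \ge c$, i.e. $u \le 1/c$, and $\widetilde K(u) = u\beta(0)$ for $u > 1/c$. Since we only care about $u \in (0,1/8)$ in the conclusion, and since the relevant regime for $K^*(w) = \sup_{u \ge 0}\{uw - K(u)\}$ at small $w$ is $u$ small (this is where $\beta$, which is decreasing to $0$, makes $K(u) = u\beta(u^{-1})$ small), I expect the supremum defining $\widetilde K^*(w)$ to be attained — or at least approximated up to a constant — at values of $u$ bounded away from $1/c$. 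First I would make this precise: show that for $w$ in a fixed small range, $\widetilde K^*(w) = \sup_{0 \le u \le u_0}\{uw - \widetilde K(u)\}$ for some $u_0 < 1/c$ depending only on $\beta$ and $c$ (using monotonicity properties of $K^*$ from Lemma~\ref{lemma:K_prop} and the fact that $\beta(s)\to 0$).

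The core estimate is then a pointwise comparison of $\widetilde K(u) = u\beta(u^{-1}-c)$ with $K(u) = u\beta(u^{-1})$ on the range $u \le u_0$. Writing $t = u^{-1}$, we need $t\,\beta(t-c)$ versus $t\,\beta(t)$ for $t \ge u_0^{-1}$ large; since $\beta$ is decreasing, $\beta(t-c) \ge \beta(t)$, so $\widetilde K(u) \ge K(u)$ pointwise, which gives $\widetilde K^*(w) \le K^*(w)$ trivially — but we want the reverse inequality $\widetilde K^* \ge \widetilde c\, K^*$. For that I would instead compare $\widetilde K(u)$ with a rescaled $K$: observe $\beta(t-c) = \beta\bigl(t(1 - c/t)\bigr)$, and for $t \ge 2c$ we have $1 - c/t \ge 1/2$, so $\beta(t-c) \le \beta(t/2)$ by monotonicity. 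Hence $\widetilde K(u) = u\beta(u^{-1}-c) \le u\beta\bigl((2u)^{-1}\bigr) = \tfrac12 \cdot (2u)\beta\bigl((2u)^{-1}\bigr) = \tfrac12 K(2u)$ for $u \le 1/(2c)$. Plugging this into the Legendre transform and substituting $v = 2u$ yields
\[
\widetilde K^*(w) = \sup_{u}\{uw - \widetilde K(u)\} \ge \sup_{v}\Bigl\{\tfrac v2 w - \tfrac12 K(v)\Bigr\} = \tfrac12 K^*\bigl(\tfrac w2\bigr) \cdot \text{(up to the truncation)},
\]
and then I would use the property from Lemma~\ref{lemma:K_prop} that $w \mapsto w^{-1}K^*(w)$ is increasing on $[0,\mathfrak a]$ to compare $K^*(w/2)$ with $K^*(w)$: since $w/2 \le w$, we get $K^*(w/2) \ge \tfrac12 K^*(w)$, so altogether $\widetilde K^*(w) \ge \tfrac14 K^*(w)$, i.e. $\widetilde c = 1/4$ works on the appropriate range. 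The truncation to $u \le u_0$ must be reconciled with the unrestricted supremum defining $K^*$; this is handled by the first step, which confines the effective supremum to a region where the pointwise bound $\widetilde K(u) \le \tfrac12 K(2u)$ is valid.

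The main obstacle I anticipate is the bookkeeping around the truncation at $u = 1/c$: one must verify carefully that, for $w$ small (equivalently $u$ in the definition of $K^*$ small, which is exactly the regime forcing $w^{-1}K^*(w) \le 1$, i.e. $w \le \mathfrak a \le 1/4$), the flat part $\widetilde K(u) = u\beta(0)$ for $u > 1/c$ does not contribute to the supremum — and also that the substitution $v = 2u$ stays within the valid range. A clean way to organize this is to note that $uw - \widetilde K(u) \le uw \le \tfrac1c w$ for $u \le 1/c$ is automatically beaten by the value at, say, $u$ of order $w$ times a constant once $w$ is small, and to choose the threshold $1/8$ (matching the $\mathfrak a \le 1/4$ bound and Popoviciu) so that everything fits. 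I would also remark that an entirely analogous (and slightly simpler) argument gives the matching upper bound $\widetilde K^*(w) \le K^*(w)$, so that $\widetilde K^* \asymp K^*$ at small argument, which is what is ultimately used when feeding $\widetilde\beta$ of the form~\eqref{eq:beta_form_mod} into the convergence rates of Section~\ref{subsec:specific_cases}; but for the statement as given only the lower bound is needed.
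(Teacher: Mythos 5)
Your overall strategy --- turn the shift $s\mapsto s-c$ into a substitution inside the Legendre transform, and use a threshold argument to confine the effective supremum --- is the right one, and it is the same spirit as the paper's proof. But there is a genuine gap in the execution, and the claimed universal constant $\widetilde c=1/4$ is false.

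The problem is the mismatch between the validity range of your pointwise bound and the effective range of the supremum defining $K^*$. Your bound $\widetilde K(u)\le\tfrac12 K(2u)$ requires $u\le 1/(2c)$, so after substituting $v=2u$ you only obtain $\widetilde K^*(w)\ge\tfrac12\sup_{0<v\le 1/c}\{vw-K(v)\}$. (Incidentally, this scaling gives $\tfrac12 K^*(w)$ directly if the truncation were harmless; your extra step invoking Lemma~\ref{lemma:K_prop} is both unnecessary and backwards --- monotonicity of $w\mapsto w^{-1}K^*(w)$ yields $K^*(w/2)\le\tfrac12 K^*(w)$, not $\ge$.) The truncation, however, is \emph{not} harmless: for $w$ up to $1/8$ the supremum defining $K^*(w)$ effectively runs over $v<w_{1/8}:=\sup\{v>0:\beta(1/v)\le 1/8\}$, and this threshold depends only on $\beta$ and can be arbitrarily larger than $1/c$. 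Concretely, take $\beta(s)=\min(1,\eta_0/s)$ with $\eta_0=10^{-6}$ and $c=1$: then $K(u)=\eta_0 u^2$ for $u\le \eta_0^{-1}$, the maximizer for $K^*(1/8)$ sits at $u^*=62500\gg 1/c$, and one computes $K^*(1/8)\approx 3906$ while $\widetilde K^*(1/8)\approx 0.12$, a ratio of order $10^{-5}$. So no argument can deliver $\widetilde c=1/4$; the constant must degrade with $\beta$ and $c$. Your first step (restricting the supremum in $\widetilde K^*$ away from the flat part $\widetilde\beta=\beta(0)$) addresses the wrong side of the ledger --- restricting the supremum in a \emph{lower} bound for $\widetilde K^*$ is always free; the loss occurs on the $K^*$ side after the substitution.

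The paper sidesteps this entirely with the exact change of variables $\widetilde u=u/(1+cu)$, which satisfies $1/\widetilde u-c=1/u$ and therefore converts the shifted $\widetilde\beta$ back into $\beta$ with no dilation of the argument, while mapping all of $u\in(0,\infty)$ into $\widetilde u\in(0,1/c)$. The only loss is the multiplicative factor $1/(1+cu)$, which on the truncated range $u<w_{1/8}$ is bounded below by $\widetilde c=(1+c\,w_{1/8})^{-1}$ --- a constant that, consistently with the counterexample above, genuinely depends on $\beta$ and $c$. To repair your proof you would need to replace the fixed dilation factor $2$ by one large enough to cover the whole effective range $u<w_{1/8}$, at which point you essentially recover the paper's argument.
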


\begin{remark}
    In particular, since we have the following inequality for $x<1/8$:
    $$\int_x^\mathfrak{a}\frac{\dif v}{\widetilde K^*(v)}=a+\int_x^{1/8}\frac{\dif v}{\widetilde K^*(v)}\le  a+ \tilde c^{-1} \int_x^{1/8}\frac{\dif v}{ K^*(v)}, 
    \qquad
    a = \int_{1/8}^\mathfrak{a}\frac{1}{\widetilde K^*},$$
    the resulting rate of convergence arising from $\tilde K^*$ can be bounded by the same rate as that of $K^*$ up to constant factors.
\end{remark}

\begin{proof}
    Simple algebra shows that we can write, with the change of variable~$\widetilde{u} = u/(1+cu)$,
    \begin{align*} 
        \widetilde K^*(v) & = \sup_{\widetilde{u} > 0} \left\{ \widetilde{u} \left[ v - \widetilde{\beta}\left(\frac{1}{\widetilde{u}}\right) \right] \right\} = \sup_{\widetilde{u} > 0} \left\{ \widetilde{u} \left[ v - \beta\left(\frac{1}{\widetilde{u}}-c\right) \right] \right\}\\
        & = \sup_{u>0} \left\{ \frac{u}{1+cu}\left[ v-\beta\left(\frac1u\right)\right] \right\}.
    \end{align*}
    Now, we can find a threshold $w:= \sup\left\{u>0: \beta(1/u)\le 1/8\right\} $, since $\beta(s)\to 0$ as $s\to\infty$ monotonically. Thus, for $v<1/8$, it holds that $v-\beta(1/u) \leq 0$ for $u \geq w$, and so the maximization in~$\widetilde{K}^*$ can be restricted to~$u < w$:
        \begin{align*}
            \widetilde K^*(v)= \sup_{0<u<w} \left\{ \frac{u}{1+cu}\left[ v-\beta\left(\frac1u\right)\right] \right\} \ge \sup_{0<u<w} \left\{ \frac{u}{1+cw}\left[ v-\beta\left(\frac1u\right)\right] \right\}
            = \widetilde c K^*(v),
        \end{align*}
    with $\widetilde c := (1+cw)^{-1}$.
\end{proof}

Once the function~$K^*$ has been identified, a rate of convergence can be then be derived following Theorem~\ref{thm:conv}. We apply the above result to various situations of interest, with sublinear or logarithmic potential energies~$\phi$ and kinetic energies~$\psi$, by identifying in each case the function~$\beta_v$ associated with the weak Poincar\'e inequality for the velocity distribution, as well as the function~$\beta_x$ defined in~\eqref{eq:beta_x}, in order to deduce a convergence rate from~\eqref{eq:beta_form_mod} and Proposition~\ref{prop:chaining}. Note at this stage that the functions associated with~$\beta_\mathrm{kin}$ in~\eqref{eq:beta_related_overline_beta} are related to the functions associated with~$\overline{\beta}$ in~\eqref{eq:beta_form_mod} as
\[
K(w) = 2\gamma \overline{C} \, \overline{K}\left(\frac{w}{2\gamma}\right),
\qquad
K^*(w) = 2\gamma \overline{C} \, \overline{K}^*\left(\frac{w}{\overline{C}}\right),
\]
so that
\[
F_\mathfrak{a}(z) = \frac{1}{2\gamma} \overline{F}_{\mathfrak{a}/\overline{C}}\left(\frac{z}{\overline{C}}\right).
\]
Therefore, the convergence rate~$F_\mathfrak{a}^{-1}(t)$ associated with~$\beta_\mathrm{kin}$ is related to the convergence rate~$\overline{F}_\mathfrak{a}^{-1}(t)$ associated with~$\overline{\beta}$ as
\[
F_\mathfrak{a}^{-1}(t) = \overline{C} \, \overline{F}_{\mathfrak{a}/\overline{C}}^{-1}(2\gamma t).
\]
For the examples below, it is therefore sufficient to understand the scaling of~$\overline{\beta}$ as~$F_\mathfrak{a}^{-1}$ and~$\overline{F}_\mathfrak{a}^{-1}$ have similar asymptotic behaviors.

\medskip

\begin{example}
  \label{exam:vel_poly_x_poly}
  Suppose that the kinetic energy is~$\psi(v)=(d+q)\log\langle v \rangle$ and that the potential energy is~$\phi(x) = (d+p)\log\langle x\rangle$ for some $q,p>0$. In view of to~\cite[Proposition~4.9]{cattiaux2010functional}, a weak Poincar\'e inequality holds for~$\nu$ with
  \begin{equation}
    \beta_v(s) = \eta_{0,v} s^{-q/2},
    \label{eq:wpi_v_poly}
  \end{equation}
  while~$\beta_x(s) = \eta_{0,x} s^{-p/2}$ from the computations in Example~\ref{ex:poly_gaus}. Then, in view of~\eqref{eq:beta_form_mod} and~\cite[Example~34]{Andrieu2022}, 
  \begin{equation*}
    \overline{\beta}(s) \le \eta_0 s^{-\frac{pq}{2(2+p+q)}}. 
  \end{equation*}
  Lemma~\ref{lem:poly_beta} therefore allows to conclude to the convergence rate
  \begin{equation*}
    \mathcal{H}_\tau(t) \le C \Phi(h_0) t^{-\frac{pq}{2(2+p+q)}}.
  \end{equation*}
  The rate is symmetric in $p$ and $q$, and is consistent with the one obtained in Example~\ref{ex:poly_gaus} in the limit $q\to\infty$, and with Example~\ref{ex:v_poly_x_std} in Appendix \ref{app:stdspace} in the limit $p\to\infty$.
\end{example}

\medskip

\begin{example}
  \label{example:v_poly_x_subexp}
  Suppose that the kinetic energy is~$\psi(v)=(d+q)\log\langle v \rangle$ for $q>0$ and that the potential energy is~$\phi(x)=\langle x\rangle^\alpha$ for~$\alpha \in (0,1)$. Then~$\beta_v$ is still given by~\eqref{eq:wpi_v_poly} as in Example~\ref{exam:vel_poly_x_poly}, while~$\beta_x$ is obtained from~\eqref{eq:x_subexp} in Example~\ref{ex1}, namely
  \[
  \beta_x(s) = \eta_0 \exp\left (-\eta_1 s^{\frac{\alpha}{2(1-\alpha)}}\right).
  \]
  Applying Proposition~\ref{prop:chaining}, and using the expressions of~$K_x^*,K_v^*$ given by Lemmas~\ref{lem:sub_exp_beta} and~\ref{lem:poly_beta} respectively, we find that
  \[
  \begin{aligned}
    \overline{K}^*(w) & = (K_v^* \circ K_x^*)(w) = C K_x^*(w)^{1+2/q} = C' w^{1+2/q} \left( \log \frac1w \right)^{-2(\alpha^{-1}-1)(1+2q^{-1})} \\
    & \geq \mathscr{C}_\varepsilon w^{1+2/q-\varepsilon}
  \end{aligned}
  \]
  for an arbitrarily small~$\varepsilon \in (0,2/q)$, and therefore
  \begin{equation*}
    \begin{split}
      \overline{F}_\mathfrak{a}(z) = \int_z^\mathfrak{a} \frac{\dif w}{\overline{K}^*(w)} \leq \mathscr{C}_\varepsilon^{-1} \int_z^\mathfrak{a} w^{-1-2/q+\varepsilon} \, dw \leq \frac{1}{(2/q-\varepsilon)\mathscr{C}_\varepsilon} z^{-2/q + \varepsilon}.
    \end{split}
  \end{equation*}
  This allows to conclude that
  \[
  \mathcal{H}_\tau(t) \le R_{\varepsilon'} \Phi(h_0)\, t^{-q/2+\varepsilon'}
  \]
  for~$\varepsilon'>0$ arbitrarily small and some constant~$R_{\varepsilon'}>0$ depending on $\varepsilon'$.
\end{example}

\medskip

\begin{example}
  \label{example:v_subexp_x_subexp}
  Suppose that the kinetic energy is~$\psi(v) = \langle v\rangle^\delta$ for $\delta \in (0,1)$ and that the potential energy is~$\phi(x) = (d+p)\log\langle x\rangle$ for some $p>0$. Then, by an argument similar to the one of Example~\ref{example:v_poly_x_subexp} (where the expressions of~$K_v^*$ and~$K_x^*$ are exchanged), we find that, for any arbitrarily small~$\varepsilon>0$, there exists a constant~$R_\varepsilon>0$ such that 
  \[
  \mathcal{H}_\tau(t) \le R_\varepsilon \Phi(h_0)\, t^{-p/2+\varepsilon}.
  \]
  \label{example:v_subexp_x_poly}
\end{example}

\medskip

\begin{example}
  \label{ex6}
  Suppose that the kinetic energy is~$\psi(v) = \langle v\rangle^\delta$ and that the potential energy is~$\phi(x)=\langle x\rangle^\alpha$ for $\delta,\alpha\in(0,1)$. Since the associated functions~$\beta_x,\beta_v$ are obtained from~\eqref{eq:x_subexp} and~\eqref{eq:wpi_v_poly} respectively, the associated functions~$K_x^*,K_v^*$ are given by~\eqref{eq:K_star_sublinear} with~$\eta_2$ set to~$\alpha/[2(1-\alpha)]$ and~$\delta/[2(1-\delta)]$ respectively. Therefore, in view of Proposition~\ref{prop:chaining}, we obtain, for~$w \in (0,\mathfrak{a}]$,
  \[
  \begin{aligned}
    \overline{K}^*(w) = (K_v^* \circ K_x^*)(w)
    & \geq C_v K_x^*(w) \left(\log \frac{1}{K_x^*(w)} \right)^{-2(1-\delta)/\delta} \\
    & \geq C_v C_x w \left(\log \frac{1}{w} \right)^{-2(1-\alpha)/\alpha} \left(\log \frac{1}{K_x^*(w)} \right)^{-2(1-\delta)/\delta} \\
    & \geq \mathscr{C} w \left(\log \frac{1}{w} \right)^{-1/\eta},
  \end{aligned}
  \]
  where~$\eta>0$ satisfies
  \[
  \frac{1}{\eta} = 2\left( \frac{1-\alpha}{\alpha} + \frac{1-\delta}{\delta} \right),
  \]
  \emph{i.e.}
  \[
  \eta = \frac{\alpha \delta}{2(\alpha+\delta-2\alpha\delta)}.
  \]
  In view of Lemma~\ref{lem:sub_exp_beta}, this leads to the convergence rate 
  \begin{equation*}
    \mathcal{H}_\tau(t) \le C \Phi(h_0) \exp\left( -C't^{\frac{\alpha \delta }{2\alpha+2\delta -3\alpha \delta}} \right),
  \end{equation*}
  for some constants $C,C'>0$. As in Example~\ref{exam:vel_poly_x_poly}, the convergence rate is symmetric with respect to~$\alpha$ and~$\delta$. Moreover, the result is consistent with Example~\ref{ex1} when taking the limit~$\delta\to 1$, and with Example~\ref{ex:v_subex_x_std} in Appendix~\ref{app:stdspace} when taking the limit~$\alpha \to 1$.
\end{example}

\bmhead{Acknowledgements}
The idea of this work first originated when GS, AQW and LW met at the 14th International Conference on Monte Carlo Methods and Applications at Sorbonne University in Paris in June 2023. 

\noindent We would like to thank Jean Dolbeault for helpful suggestions. 

The work of GB has received funding from the European Union’s Horizon 2020 research and innovation programme under the Marie Skłodowska-Curie grant agreement No 101034413.

The work of GS was funded by the European Research Council (ERC) under the European Union's Horizon 2020 research and innovation programme (project EMC2, grant agreement No 810367), and by Agence Nationale de la Recherche, under grants ANR-19-CE40-0010-01 (QuAMProcs) and ANR-21-CE40-0006 (SINEQ).

GS and AQW would like to thank the Isaac Newton Institute for Mathematical Sciences, Cambridge, for support and hospitality during the programme Stochastic Systems for Anomalous Diffusions where some work on this paper was undertaken. This work was supported by EPSRC grant no EP/R014604/1.

LW is supported by the National Science Foundation via grant DMS-2407166. Part of this work was completed when LW was visiting Institute of Science and Technology, Klosterneuburg, Austria and Max Planck Institute for Mathematics in the Sciences in Leipzig, Germany. LW would like to thank ISTA and MPI MiS for their hospitality.

\begin{appendix}
  
\section{Convergence rate with strong spatial confinement using weak Poincar\'e inequality}
\label{app:stdspace}

We present in this section how to derive convergence rates of~\eqref{eq:langevin} for $\rmL^\infty$ initial data and various velocity equilibria~$\psi$ when $\mu$ satisfies the standard Poincar\'e inequality~\eqref{eq:pimu}. In this case, we know from~\cite[Theorem~2]{Cao2023} or~\cite[Lemma~3]{Brigati2023} that there exists a constant $C_{\rm PL}$ such that for any $h(t,x,v) \in \rmL^2(U_\tau \otimes \Theta)$ with $(\partial_t + \calT)h \in \rmL^2(U_\tau \otimes \mu; \mathrm{H}^{-1}(\nu))$ and $\int_{[0,\tau] \times \R^d \times \R^d} h \dif U_\tau \dif \Theta =0$,
\begin{equation}\label{eq:stdpl}
  \|h\|_{\rmL^2(U_\tau \otimes \Theta)}^2 \le C_{\rm PL}\left(\|h-\Pi_v h\|^2_{\rmL^2(U_\tau \otimes \Theta)}  + \|(\partial_t + \calT) h\|_{\rmL^2(U_\tau \otimes \mu; \mathrm{H}^{-1}(\nu))}^2\right).
\end{equation}
Now, consider an initial condition~$h_0 \in \rmL^\infty$, and let $h$ be the solution of~\eqref{VOU}. The second term on the right-hand side of \eqref{eq:stdpl} is directly controlled by $\gamma \calD(t)/2$ by manipulations similar to the ones used for case~(i) in the proof of Theorem~\ref{thm:weightedpiconv}; while the other term can be treated using Assumption~\ref{assm:wpi_v} (namely that~$\nu$ satisfies a weak Poincar\'e inequality). This leads to
\[
\calH(t) \le C_{\rm PL} \left(\left(s+\frac{\gamma}{2}\right)\calD(t) + \beta_v(s) \Phi(h_0) \right).
\]
After a change of variables $s+\frac{\gamma}{2} \mapsto s$, this inequality is exactly the weak dissipation inequality~\eqref{eq:abstract_wpi}, with $\beta_v(s-\frac{\gamma}{2})$ playing the role of $\beta_\mathrm{kin}$, which by Lemma \ref{lem:shiftbetas} has the same asymptotic behavior as for $\beta_v$. Hence, we can  apply Theorem \ref{thm:conv} to obtain the following convergence bounds, similarly to previous examples. 

\medskip

\begin{example}\label{ex:v_subex_x_std}
  Suppose that the kinetic energy is $\psi(v) = \bangle{v}^\delta$ for $\delta \in (0,1)$. Then, $\beta_v$ is obtained from~\eqref{eq:x_subexp} in Example~\ref{ex1}, and we follow a similar calculation to that example and derive
  \[
  \mathcal{H}_\tau(t) \le C \Phi(h_0) \exp\left( -C't^{\frac{\delta}{2-\delta}} \right).
  \]
\end{example}

\begin{example}\label{ex:v_poly_x_std}
  Suppose that the kinetic energy is $\psi(v) = (d+q)\log \bangle{v}$ for $q>0$. Then $\beta_v$ has the form \eqref{eq:wpi_v_poly}, and we obtain, with a calculation similar to Example \ref{ex:poly_gaus},
  \[
  \mathcal{H}_\tau(t) \le C \Phi(h_0)\, t^{-q/2}.
  \]
\end{example}

\end{appendix}

\bibliography{kFP-wPI}

\end{document}